\newtheorem{lemma}{Lemma}[section]
\newtheorem{prop}[lemma]{Proposition}
\newtheorem{cor}[lemma]{Corollary}
\newtheorem{thm}{Theorem}
\newtheorem{rmk}[lemma]{Remark}
\numberwithin{equation}{section}
\begin{document}

\title{\bf Threshold phenomena for symmetric decreasing solutions of
  reaction-diffusion equations}

\author{C. B. Muratov\footnote{Department of Mathematical Sciences, New
    Jersey Institute of Technology, Newark, NJ 07102, USA} \and
  X. Zhong\footnotemark[\value{footnote}]}

\maketitle

\begin{abstract}
  We study the long time behavior of solutions of the Cauchy problem
  for nonlinear reaction-diffusion equations in one space dimension
  with the nonlinearity of bistable, ignition or monostable type. We
  prove a one-to-one relation between the long time behavior of the
  solution and the limit value of its energy for symmetric decreasing
  initial data in $L^2$ under minimal assumptions on the
  nonlinearities. The obtained relation allows to establish sharp
  threshold results between propagation and extinction for monotone
  families of initial data in the considered general setting.
\end{abstract}

\section{Introduction}

In this paper, we study the Cauchy problem for the nonlinear
reaction-diffusion equation
\begin{equation}
  \label{main}
  u_t=u_{xx}+f(u),\;\;x\in\mathbb{R},\;t>0,
\end{equation}
\begin{equation}\label{initial}u(x,0)=\phi(x)\geq
  0 \;\;\;x\in\mathbb{R}, \quad \qquad \phi
  \in{L^2(\mathbb{R}) \cap L^{\infty}(\mathbb{R})}.
\end{equation}
The nonlinearity $f$ satisfies
\begin{equation}
  \label{condition}
  f \in  C^1([0,\infty)),\;\;f(0)=f(1)=0,\;\;
  f(u)<0\;\text{for}\;u>1.
\end{equation}
We are interested in the long time behavior of solution of
(\ref{main}). Since $u = 0$ and $u = 1$ are solutions of the
stationary problem for (\ref{main}), one possible behavior of the
solution is {\em extinction}, i.e.
$\lim_{t\rightarrow\infty}u(x,t)=0$ uniformly in $\mathbb{R}$. Another
possible behavior of the solution is {\em propagation}, i.e.
$\lim_{t\rightarrow\infty}u(x,t)=1$ locally uniformly in $\mathbb{R}$
and, moreover, $\lim_{t\rightarrow\infty}u(x + ct,t)=1$ locally
uniformly for all sufficiently small $c \in \mathbb R$. This type of
question was first posed in the context of combustion modeling, where
the considered initial value problem prominently appears
\cite{zeldovich38,zeldovich,buckmaster-lect}, and is also relevant to
numerous other applications in physics, chemistry and biology (see,
e.g., \cite{merzhanov99,murray,mikhailov,ko:book}).  In the context of
combustion, when cold fuel and oxidizer gases are premixed in a tube,
a sufficiently large region of heated gas generated, say, by a spark
will ignite a pair of counter-propagating flame fronts, while
insufficient heating will fail to result in ignition. Understanding
the nature of the threshold phenomena associated with ignition is,
therefore, important for many phenomena governed by reaction and
diffusion processes.

Mathematical studies of the ignition problem date back to the early
1960's. In his pioneering work, Kanel' \cite{K1964} considered the
long time behavior of solution of (\ref{main}) with ignition
nonlinearity $f$, whose initial condition $\phi$ is the characteristic
function $\chi_{[-L,L]}(x)$ of the interval $[-L,L]$.  He proved that
there exist constants $L_1 \geq L_0 > 0$, depending on $f$, such that
extinction occurs when $L<L_0$, and propagation occurs when
$L>L_1$. Aronson and Weinberger \cite{AW1975} extended this result to
bistable nonlinearities and more general initial conditions. These
works, however, did not provide any further information on the nature
of the transition between ignition and extinction.

Further insight into the ignition problem was provided very recently
by Zlato\v{s} \cite{Z2006} (see also related works
\cite{FP1997,Fl1989}), who proved that in the problem studied by
Kanel' it is possible to choose $L_0=L_1$, i.e. the transition from
extinction to propagation is {\em sharp}. He also found that the long
time behavior of the solution with the initial data corresponding to
the threshold value $L_0$ is neither extinction nor propagation.  In
particular, for bistable nonlinearities the solution of the initial
value problem with the data corresponding to $L_0$ converges to the
stationary ``bump'' solution of \eqref{main}, i.e., the unique
symmetric decreasing solution of
\begin{equation}
  \label{stationary}v''(x)+f(v(x))=0,\;\;x\in\mathbb{R}.
\end{equation}
Du and Matano \cite{DM2010} generalized the sharp transition result of
Zlato\v{s} to monotone families of compactly supported initial data by
using the zero number counting argument. By a different method,
Pol\'{a}\v{c}ik \cite{P2011} gave a higher-dimensional extension,
still for compactly supported initial data.

As was pointed out by Matano \cite{matano}, all the works on sharp
threshold behavior between ignition and extinction mentioned above
crucially rely on the assumption of the data being compactly supported
(or rapidly decaying) and, therefore, may not be applied to data that
lie in the natural function spaces, such as, e.g., $L^2(\mathbb
R)$. The purpose of this work is to provide such an extension in the
context of the problem originally considered by Kanel'. To achieve
this goal, we take advantage of the gradient flow structure of the
considered equation and develop energy-based methods that are quite
different from those used in the above works.  One of the main tools
for our analysis of the threshold behavior is the result on a
one-to-one correspondence between the long time behavior of the
solution and that of its suitably defined energy that we establish in
this paper.

As in the work of Du and Matano \cite{DM2010}, we consider an
increasing one-parameter family of initial conditions
$\phi_{\lambda}$, $\lambda > 0$, satisfying conditions in
(\ref{initial}), with $\displaystyle \lim_{\lambda \to 0} \phi_\lambda \equiv 0$,
and the map $\lambda \mapsto \phi_{\lambda}$ increasing and continuous
in the $L^2(\mathbb{R})$ norm. We also require an additional technical
assumption that $\phi_\lambda(x)$ be a symmetric decreasing function
of $x$:
\begin{enumerate}
\item[(SD)] The initial condition $\phi(x)$ in (\ref{initial}) is
  symmetric decreasing, i.e., if $\phi(-x)=\phi(x)$ and $\phi(x)$ is
  non-increasing for every $x>0$.
\end{enumerate}
This assumption allows us to avoid a possible long-time behavior
consisting of a bump solution slowly moving off to infinity, which was
pointed out for some related problems \cite{F1997}. In the case of
bistable and ignition nonlinearities (for precise definitions and
statements, see the following section) it is easy to show that if the
parameter $\lambda$ is small enough, then extinction occurs. We then
wish to know if propagation can occur when $\lambda$ is large. And a
more interesting question is: does there exist any long time behavior
of solution, which is neither extinction, nor propagation, for
intermediate values of $\lambda$?  On the other hand, for monostable
nonlinearities it is known that propagation occurs for any $\lambda >
0$ if $f'(0) > 0$ \cite{AW1975}, or even when $f(u) \sim u^p$ for
small $u$, when $p \leq p_c$, where $p_c = 3$ is the Fujita exponent
in one space dimension (see e.g. \cite{AW1978,QS2007}). Nevertheless,
the question of long-time behavior is also non-trivial for $p > p_c$
and to the best of our knowledge has not been treated so far.

Here we prove, for bistable and ignition nonlinearities, that if
propagation occurs at some value of $\lambda > 0$, then there is a
value of $\lambda = \lambda^* > 0$ which serves as a sharp threshold
between propagation for $\lambda > \lambda^*$ and extinction for
$\lambda < \lambda^*$. We also characterize the behavior of solution
at $\lambda = \lambda^*$, thus generalizing the result of Zlato\v{s}
to the considered class of data. And for monostable nonlinearities
which are supercritical with respect to the Fujita exponent, we prove
that if propagation occurs at some value of $\lambda > 0$, then there
exists a value $\lambda^* > 0$, which serves as a sharp threshold
between propagation for $\lambda > \lambda^*$ and extinction at
$\lambda \leq \lambda^*$.  Note that in this case propagation and
extinction exhaust the list of possible long-time behaviors of
solutions. In addition, we obtain a new sufficient condition for
propagation which can be easily verified.  We also note that with
minor modifications many of our conclusions still hold if $f(u)$
is only locally Lipschitz.

Our paper is organized as follows. In Section \ref{s:main} we
introduce the background results related to the variational structure
of the considered problem. Then in Section \ref{s:bistable} we
consider bistable nonlinearities and give our convergence result in
Theorem \ref{thmbistable}, our one-to-one relation result in Theorem
\ref{theorembistable} and our sharp threshold result in Theorem
\ref{sharpbistable}. Then in Section \ref{s:mono} we treat monostable
nonlinearities and give our convergence result in Theorem
\ref{thmmonostable}, our one-to-one relation result in Theorem
\ref{theoremmonostable} and our sharp threshold result in Theorem
\ref{sharpmonostable}, and in Section 5 we present results for
ignition nonlinearities, with our convergence result in Theorem
\ref{thmignition}, the relation with the limit energy in Theorem
\ref{theoremignition} and our sharp threshold result in Theorem
\ref{sharpignition}.

\section{Preliminaries}
\label{s:main}

We first recall that existence of classical solutions for \eqref{main}
with initial data satisfying \eqref{initial} is well known. In view of
\eqref{condition}, these solutions are positive, uniformly bounded
and, hence, global in time. Furthermore, it is well know that the
derivatives $u_t(x,t)$, $u_x(x,t)$, $u_{xx}(x,t)$ of the solution of
(\ref{main}) can be estimated in the uniform norm in terms of $u$
itself. More precisely, the uniform boundedness of $|u|$ in the
half-space ${t>0}$ controls the boundedness of $|u_t|$, $|u_x|$ and
$|u_{xx}|$ in the half-space $t\geq T$ for any $T > 0$ (see,
e.g. \cite{Fr1964,FM1977}). We will refer to this boundedness as
``standard parabolic regularity.'' For our purposes here, however, we
will also need a suitable existence theory for solutions in integral
norms that measure, in some sense, the rate of the decay of solutions
as $x \to \pm \infty$. This is because we wish to work with the {\em
  energy} functional, defined as
\begin{equation}
    \label{E}
    E[u] :=\int_{\mathbb{R}} \left( \frac{1}{2}u_x^2+V(u) \right) dx,
    \qquad V(u) := -\int_{0}^{u}f(s)ds.
\end{equation}
Clearly, this functional is well-defined for any $u \in
H^1(\mathbb{R}) \cap L^\infty(\mathbb R)$ and of class $C^1$ in
$H^1(\mathbb R)$. Similarly, for a given $c > 0$ we define the
exponentially weighted functional $\Phi_c$ associated with \eqref{E}
as \begin{equation}
  \label{Phic}
  \Phi_c[u] :=\int_{\mathbb{R}}e^{cx} \left( \frac{1}{2}u_x^2+V(u)
  \right) dx,
\end{equation}
which is well-defined for $L^\infty$ functions in the exponentially
weighted Sobolev space $H^1_c(\mathbb{R})$ with the norm
\begin{equation}
  \|u\|_{H^1_c}^2  :=\|u\|_{L^2_c}^2 +\|u_x\|_{L^2_c}^2, \qquad
  \|u\|_{L^2_c}^2 :=\int_{\mathbb{R}}e^{cx}u^2dx.
\end{equation}
Similarly, we can define the space $H^2_c(\mathbb R)$ as the space of
functions whose first derivatives belong to $H^1_c(\mathbb R)$.

The following proposition guarantees existence and regularity
properties of solutions of (\ref{main}) in both the usual and the
exponentially weighted Sobolev spaces.

\begin{prop}
  \label{p:exist}
  Under (\ref{condition}), there exists a unique solution $u
  \in{C^2_1(\mathbb{R}\times(0,\infty))} \cap
  L^\infty(\mathbb{R}\times(0,\infty)))$ satisfying (\ref{main}) and
  (\ref{initial}) (using the notations from \cite{E1998}), with
  $$u \in  C([0,\infty);
  L^2(\mathbb{R})) \cap C((0,\infty);H^2(\mathbb{R}))$$ and $u_t \in
  C((0,\infty);H^1(\mathbb{R}))$. Furthermore, if there exists $c>0$
  such that the initial condition $\phi(x)\in{L^2_c(\mathbb{R})}
  \cap{L^{\infty}(\mathbb{R})}$, then the solution of (\ref{main}) and
  (\ref{initial}) satisfies
  $$u \in C([0,\infty);
  L_c^2(\mathbb{R})) \cap C((0,\infty);H_c^2(\mathbb{R})),$$ with $u_t
  \in C((0,\infty);H_c^1(\mathbb{R}))$.  In addition, small variations
  of the initial data in $L^2(\mathbb R)$ result in small changes of
  solution in $H^1(\mathbb R)$ at any $t > 0$.
\end{prop}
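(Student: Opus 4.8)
The plan is to establish Proposition~\ref{p:exist} by a fairly standard semigroup/fixed-point argument, carried out in parallel for the unweighted space $L^2(\mathbb R)$ and the weighted space $L^2_c(\mathbb R)$, and then bootstrap the regularity. First I would write \eqref{main} in mild form, $u(t) = e^{t\Delta}\phi + \int_0^t e^{(t-s)\Delta} f(u(s))\,ds$, using the heat semigroup $e^{t\Delta}$ on $L^2(\mathbb R)$. Since $\phi \in L^\infty(\mathbb R)$, the comparison principle (or the explicit solution of the ODE $\dot w = f(w)$ dominating the PDE from above, together with $u \geq 0$) already gives the a priori bound $0 \le u \le M := \max\{1,\|\phi\|_\infty\}$ for all $t>0$; on this invariant region $f$ is globally Lipschitz, so the Lipschitz constant $L_f := \sup_{[0,M]}|f'|$ controls the nonlinear term. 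Then a contraction-mapping argument in $C([0,T];L^2(\mathbb R))$ for $T$ small (depending only on $L_f$, hence uniform) yields local existence and uniqueness, and the global-in-time bound lets one iterate to get $u \in C([0,\infty);L^2(\mathbb R))$. Uniqueness in the stated class $C^2_1 \cap L^\infty$ follows from the a priori bound plus Gronwall applied to the difference of two solutions in $L^2$.

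The smoothing of the heat semigroup then upgrades regularity: for $t>0$ one has $e^{t\Delta}: L^2 \to H^2$ with norm $O(t^{-1})$, and $f(u) \in C((0,\infty);L^2)$ because $u$ is bounded and $C^1$ in $u$ is enough for $s \mapsto f(u(\cdot,s))$ to be continuous into $L^2$ given $u \in C((0,\infty);L^2)\cap L^\infty$. Feeding this into the Duhamel formula and using standard parabolic estimates (or analytic-semigroup interpolation) gives $u \in C((0,\infty);H^2(\mathbb R))$ and $u_t = \Delta u + f(u) \in C((0,\infty);H^1(\mathbb R))$, i.e. the first regularity claim. The continuity at $t=0$ into $L^2$ is built into the fixed-point space; continuity into $H^2$ is only asserted for $t>0$, so no compatibility condition on $\phi$ is needed.

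For the weighted statement I would repeat the argument with $e^{t\Delta}$ replaced by its action on $L^2_c(\mathbb R)$. The key point is that conjugating by the weight $e^{cx/2}$ turns the heat semigroup on $L^2_c$ into $e^{t(\Delta - c\partial_x + c^2/4)} $-type operator on $L^2$ (unweighted), equivalently $e^{t\Delta}$ on $L^2_c$ equals $e^{c^2 t/4}$ times a translation-and-heat operator; in any case it is an analytic semigroup on $L^2_c$ with the same smoothing rates and an extra harmless factor $e^{c^2 t /4}$. Since $f(u)$ vanishes at $u=0$ and $u$ is bounded, $|f(u)| \le L_f |u|$ pointwise, so $\|f(u)\|_{L^2_c} \le L_f \|u\|_{L^2_c}$ and the nonlinearity still maps $L^2_c$ to itself with a controlled Lipschitz bound on the invariant region; the contraction argument and the bootstrap go through verbatim, giving $u \in C([0,\infty);L^2_c)\cap C((0,\infty);H^2_c)$ and $u_t \in C((0,\infty);H^1_c)$. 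Note the solution constructed here must coincide with the unweighted one by uniqueness, so no consistency issue arises.

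Finally, continuous dependence: if $u,\tilde u$ solve \eqref{main} with data $\phi,\tilde\phi$, subtracting the Duhamel formulas and using $\|f(u)-f(\tilde u)\|_{L^2} \le L_f\|u-\tilde u\|_{L^2}$ gives, via Gronwall, $\|u(t)-\tilde u(t)\|_{L^2} \le e^{L_f t}\|\phi-\tilde\phi\|_{L^2}$; then the smoothing bound $\|u(t)-\tilde u(t)\|_{H^1} \le C t^{-1/2} e^{L_f t}\|\phi-\tilde\phi\|_{L^2} + (\text{lower order})$ for $t>0$ yields the last sentence. The main obstacle — really the only nonroutine point — is making sure the Lipschitz constant for $f$ can be taken \emph{uniform} in the solution, which is exactly what the a priori $L^\infty$ bound coming from \eqref{condition} and positivity provides; once the problem is reduced to a globally Lipschitz nonlinearity on an invariant region, everything else is the standard analytic-semigroup toolkit, and the weighted case differs only by bookkeeping the factor $e^{c^2t/4}$.
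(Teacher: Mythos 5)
Your proposal is correct and follows essentially the same route as the paper: the paper's proof is a one-line citation to the analytic-semigroup construction of \cite{MN2012} (based on \cite{lunardi}), whose only problem-specific ingredient is precisely the observation you single out, namely that $\bar u \equiv \max\{1,\|\phi\|_{L^\infty}\}$ is a universal supersolution by \eqref{condition}, so the nonlinearity is effectively globally Lipschitz on the invariant region $[0,\bar u]$. Your Duhamel/contraction argument in $L^2$ and $L^2_c$, the conjugation by $e^{cx/2}$, and the Gronwall-plus-smoothing continuous-dependence step are exactly the content of the cited result spelled out.
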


\begin{proof}
  Follows from the arguments in the proof of \cite[Proposition
  3.1]{MN2012} based on the approach of \cite{lunardi}, taking into
  consideration that by \eqref{condition} the function $\bar u(x, t) =
  \max\{1, \| \phi \|_{L^\infty(\mathbb R)} \}$ is a universal
  supersolution for the considered problem.
\end{proof}

\begin{rmk}
  We note that Proposition \ref{p:exist} does not require hypothesis
  (SD). However, under (SD) we also have that $u(x, t)$ is a symmetric
  decreasing function of $x$ for all $t > 0$.
\end{rmk}

In view of Proposition \ref{p:exist}, by direct calculation we obtain
the well-known identity related to the energy dissipation rate for the
solutions of \eqref{main} valid for all $t > 0$:
\begin{equation}
  \label{dEdt}
  \frac{dE}{dt}[u(\cdot,t)]=-\int_{\mathbb{R}}u_t^2(x,t)dx.
\end{equation}
In fact, the basic reason for \eqref{dEdt} is the fact that
\eqref{main} is a gradient flow in $L^2$ generated by $E$.  Similarly,
as was first pointed out in \cite{M2004}, equation \eqref{main}
written in the reference frame moving with an arbitrary speed $c > 0$
is a gradient flow in $L^2_c$ generated by $\Phi_c$.  More precisely,
defining $\tilde u(x, t) := u(x + ct, t)$, which solves
\begin{equation}
  \label{referenceframe}
  \tilde u_t=\tilde u_{xx}+c\tilde u_x+f(\tilde u),
\end{equation}
it is easy to see with the help of Proposition \ref{p:exist} that an
identity similar to \eqref{dEdt} holds for $\Phi_c$:
\begin{align}
  \label{eq:dPhicdt}
  \frac{d\Phi_c}{dt}[\tilde u(\cdot,t)]=-\int_{\mathbb{R}} e^{cx}
  \tilde u_t^2(x,t)dx.
\end{align} In particular, both $E[u(\cdot, t)]$ and $\Phi_c[\tilde
u(\cdot, t)]$ are well defined and are non-increasing in $t$ for all
$t > 0$. Also note that non-trivial fixed points of
  \eqref{referenceframe} are {\em variational traveling waves}, i.e.,
  solutions that propagate with constant speed $c > 0$ invading the
  equilibrium $u = 0$ and belong to $H^1_c(\mathbb R)$
  \cite{MN2008}. Furthermore, as was shown in \cite{MN2008}, for
  sufficiently rapidly decaying front-like initial data the
  propagation speed associated with the leading edge of the solution
  (see the next paragraph for the definition) is determined by the
  special variational traveling wave solutions which are {\em
    minimizers} of $\Phi_c$ for some unique speed $c = c^\dag > 0$. In
  the context of the nonlinearities considered in this paper, the
  following proposition gives existence, uniqueness and several
  properties of these minimizers (follows directly from \cite[Theorem
  3.3]{MN2008}; in fact, under these assumptions they are the only
  variational traveling waves, see \cite[Corollary 3.4]{MN2012}).

  \begin{prop}
    \label{p:tws}
    Let $f$ satisfy \eqref{condition}, let $f'(0) \leq 0$, and let
    $u_0 = 1$ be the unique zero of $f$ such that $\int_0^{u_0} f(u)
    du < 0$. Then there exists $c^\dag > 0$ and a unique (up to
    translation) positive traveling wave solution $u(x, t) = \bar u(x
    - c^\dag t)$ of \eqref{main} such that $\bar u(+\infty) = 0$,
    $\bar u(-\infty) = 1$, $\bar u' < 0$, and $\bar u$ minimizes
    $\Phi_c$ with $c = c^\dag$.
  \end{prop}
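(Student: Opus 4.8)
The plan is to prove Proposition~\ref{p:tws} by invoking the variational construction of traveling waves from \cite{MN2008,MN2012}. I would proceed as follows.

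\medskip

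\noindent\textbf{Step 1: Reduce to a constrained minimization for $\Phi_c$.}
First I would recall that fixed points of \eqref{referenceframe} in $H^1_c(\mathbb R)$ are exactly the critical points of $\Phi_c$ on this space. The idea is that, for each $c>0$, one minimizes $\Phi_c$ over an appropriate admissible class of functions that are bounded between $0$ and $1$, symmetric decreasing or front-like, and that ``connect'' $1$ at $-\infty$ to $0$ at $+\infty$ in the exponentially weighted sense. Under \eqref{condition} with $f'(0)\le 0$, the potential $V(u)=-\int_0^u f(s)\,ds$ satisfies $V(0)=0$, $V'(0)=-f(0)=0$, $V''(0)=-f'(0)\ge 0$, and $V(1)=-\int_0^1 f<0$ (using that $u_0=1$ is the zero with negative integral of $f$), so the geometry is that of a potential with a (possibly degenerate) local minimum at $0$ and a strictly lower value at $1$. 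This is precisely the setup of \cite[Theorem~3.3]{MN2008}, which guarantees the existence of a threshold speed $c^\dag>0$ such that the infimum of $\Phi_c$ (over the admissible class) is negative for $c<c^\dag$, zero and attained for $c=c^\dag$, and the minimizer at $c=c^\dag$ is a monotone decreasing profile $\bar u$ with $\bar u(-\infty)=1$, $\bar u(+\infty)=0$, solving $\bar u''+c^\dag\bar u'+f(\bar u)=0$.

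\medskip

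\noindent\textbf{Step 2: Extract the profile equation and monotonicity.}
From the Euler--Lagrange equation for the minimizer of $\Phi_{c^\dag}$ I would read off that $\bar u$ solves the traveling-wave ODE, hence $u(x,t)=\bar u(x-c^\dag t)$ solves \eqref{main}. Strict monotonicity $\bar u'<0$ follows either from the sliding/maximum-principle argument applied to the profile ODE (a solution that is nonincreasing and nonconstant, connecting two ordered equilibria, must be strictly decreasing), or directly from the fact established in \cite{MN2008} that the minimizer can be taken symmetric decreasing / monotone and that any flat piece would contradict the ODE since $f$ vanishes only at $0$ and $1$. Positivity and the bounds $0<\bar u<1$ come from the fact that the admissible class is truncated at $0$ and $1$, combined with the strong maximum principle applied to \eqref{referenceframe}.

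\medskip

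\noindent\textbf{Step 3: Uniqueness.}
For uniqueness up to translation, I would appeal to \cite[Corollary~3.4]{MN2012}, which states that under exactly these hypotheses the variational traveling wave is the only traveling wave; alternatively, uniqueness of the minimizer modulo translation can be obtained from the strict convexity properties of $\Phi_c$ along the relevant one-parameter families, or from a standard ODE phase-plane argument in the moving frame (once $c=c^\dag$ is fixed, the stable/unstable manifold structure at the endpoints $0$ and $1$ pins down the connecting orbit up to translation). Since the statement says the result ``follows directly from \cite[Theorem~3.3]{MN2008}'', the honest proof is essentially a citation plus verification that the hypotheses there are met, namely \eqref{condition}, $f'(0)\le 0$, and the existence of the distinguished zero $u_0=1$ with $\int_0^{u_0}f<0$.

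\medskip

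\noindent\textbf{Main obstacle.} The substantive content is entirely in \cite[Theorem~3.3]{MN2008}; the only real work here is checking that the \emph{minimal} assumptions \eqref{condition} (in particular, $f$ merely $C^1$, with no nondegeneracy at $0$ and no sign condition other than $f<0$ beyond $u=1$) still fall within the scope of that theorem and of \cite[Corollary~3.4]{MN2012}. The most delicate point is the case $f'(0)=0$, where the equilibrium $0$ is degenerate: one must be sure that the variational framework of \cite{MN2008} — which was designed precisely to handle such degenerate (and even discontinuous-derivative) situations via the exponentially weighted functional $\Phi_c$ — indeed still yields a strictly positive threshold speed $c^\dag$ and an attained minimizer. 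Given that \cite{MN2008,MN2012} are built for exactly this generality, no further argument is needed beyond pointing this out.
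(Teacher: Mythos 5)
Your proposal is correct and matches the paper's treatment: the paper gives no independent proof of Proposition \ref{p:tws}, justifying it exactly as you do by citing \cite[Theorem 3.3]{MN2008} for existence and properties of the minimizing traveling wave and \cite[Corollary 3.4]{MN2012} for uniqueness among variational traveling waves, with the only work being the verification that the hypotheses (including the degenerate case $f'(0)=0$) fall within the scope of those results.
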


  Turning back to the question of propagation, for a given $\delta >
  0$ we define the \emph{leading edge} $R_{\delta}(t)$ of the solution
  $u(x, t)$ of \eqref{main} as
  \begin{equation}
    R_{\delta}(t) :=\sup\{x \in \mathbb R : u(x,t)\geq\delta\} .
  \end{equation}
  If the set $\{x \in \mathbb R: u(x,t) \geq \delta\}=\varnothing$,
  then $R_{\delta}(t) :=-\infty$. Then, as follows from \cite[Theorem
  5.8]{MN2008}, under the assumptions of Proposition \ref{p:tws} for
  every $\phi \in L^2_c(\mathbb R)$ with some $c > c^\dag$, $\phi(x)
  \in [0,1]$ for all $x \in \mathbb R$, and $\displaystyle \lim_{x \to -\infty}
  \phi_(x) = 1$ the leading edge $R_\delta(t)$ propagates
  asymptotically with speed $c^\dag$ for sufficiently small $\delta >
  0$. Similarly, the same conclusion holds for the initial data
  obeying \eqref{initial}, provided that $\phi \in L^2_c(\mathbb R)$
  with some $c > c^\dag$ and $u(x, t) \to 1$ as $t \to \infty$ locally
  uniformly in $x \in \mathbb R$ \cite[Corollary 5.9]{MN2008}. In
  fact, a stronger conclusion can be made, which implies that the
  latter condition is equivalent to the stronger notion of propagation
  presented in the introduction, extending the results of Aronson and
  Weinberger \cite[Theorem 4.5]{AW1975} to the considered class of
  nonlinearities.

  \begin{prop}
    \label{p:propi}
    Under the assumptions of Proposition \ref{p:tws}, let $\phi$
    satisfy \eqref{initial} and assume that $u(x, t) \to 1$ as $t \to
    \infty$ locally uniformly in $x \in \mathbb R$. Then for every
    $\delta_0 \in (0, 1)$ and every $c \in (0, c^\dag)$, where
    $c^\dag$ is the same as in Proposition \ref{p:tws}, there exists
    $T \geq 0$ such that $R_\delta(t) \geq c t$ for every $t \geq T$
    and every $\delta \in (0, \delta_0]$.
  \end{prop}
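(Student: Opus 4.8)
\textit{Proof proposal.} The plan is to deduce the statement from the hypothesis $u(\cdot,t)\to1$ (locally uniformly in $x$) by a comparison argument that reduces it to the known asymptotics of the leading edge for solutions with compactly supported initial data, for which the energy machinery of \cite{MN2008} is available.

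The first step places, at some large time $T_0$, a compactly supported datum below $u$ whose own solution still propagates. Fix $\theta\in(0,1)$ and $L>0$ such that the solution of \eqref{main} with initial value $\theta\,\chi_{[-L,L]}$ converges to $1$ locally uniformly; this is possible by \cite{K1964,AW1975} in the bistable and ignition cases (take $\theta$ above the ignition/unstable value of $f$ and $L$ large), and in the monostable case after comparing with a nonlinearity $\tilde f\le f$ of ignition type and again invoking \cite{K1964}. The hypothesis then yields $T_0\ge0$ with $u(x,T_0)\ge\theta$ for all $|x|\le L+1$. Choosing a symmetric decreasing $\psi\in C_c^\infty(\mathbb R)$ with $0\le\psi\le\theta$, $\psi\equiv\theta$ on $[-L,L]$ and $\operatorname{supp}\psi\subset[-L-1,L+1]$, we have $\theta\,\chi_{[-L,L]}\le\psi\le u(\cdot,T_0)$, so the comparison principle gives $u(x,t)\ge w(x,t)$ for all $x\in\mathbb R$ and $t\ge T_0$, where $w$ solves \eqref{main} with $w(\cdot,T_0)=\psi$. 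Being squeezed between the (propagating) solution with datum $\theta\,\chi_{[-L,L]}$ and the constant supersolution $1$, the function $w$ satisfies $w(\cdot,t)\to1$ locally uniformly, and $w(\cdot,t)$ is symmetric decreasing since $\psi$ is.

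The second step applies \cite{MN2008} to $w$. As $\psi$ obeys \eqref{initial} and lies in $L^2_c(\mathbb R)$ for every $c>0$, and as $w$ propagates, the hypotheses of \cite[Theorem 5.8, Corollary 5.9]{MN2008} hold; under the assumptions of Proposition \ref{p:tws} they give that, in the reference frame moving with speed $c^\dag$, $w$ converges as $t\to\infty$ to a translate of the variational traveling wave $\bar u$ of Proposition \ref{p:tws}, the translation remaining bounded in $t$. Thus, fixing $\delta_0\in(0,1)$ and choosing $y_0$ with $\bar u(y_0)>\delta_0$, there exist $T\ge T_0$ and $M_0>0$ with $w(x,t)\ge\delta_0$ whenever $t\ge T$ and $x\le c^\dag t-M_0$.

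Finally, given $c\in(0,c^\dag)$, after enlarging $T$ so that $c^\dag t-M_0\ge ct$ for all $t\ge T$, we get $u(ct,t)\ge w(ct,t)\ge\delta_0$ for $t\ge T$, hence $R_{\delta_0}(t)\ge ct$; since $\{x:u(x,t)\ge\delta\}\supseteq\{x:u(x,t)\ge\delta_0\}$ for $\delta\le\delta_0$, also $R_\delta(t)\ge ct$ for every $\delta\in(0,\delta_0]$ and $t\ge T$. The step I expect to be the main obstacle is the second one: obtaining control of the leading edge at the prescribed level $\delta_0$, not merely at small levels, requires the full traveling-wave convergence statement of \cite{MN2008}; alternatively one may keep only the small-$\delta$ leading-edge conclusion and rule out an intermediate plateau of width of order $t$ in $w(\cdot,t)$ by combining the symmetric decreasing structure of $w$ with the boundedness in $t$ of $\Phi_{c'}[w(\cdot+c't,\,t)]$ for some $c'\in(c,c^\dag)$, which follows from \eqref{eq:dPhicdt}. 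A secondary point is to check, in the monostable case, that sufficiently large compactly supported data do generate propagating solutions, which is not part of \cite{MN2008}.
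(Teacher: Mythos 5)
Your first step (inserting a compactly supported, symmetric decreasing datum $\psi$ under $u(\cdot,T_0)$ and passing to the solution $w$) is sound and consistent with the spirit of the paper, but the second step contains a genuine gap, which you yourself flag as ``the main obstacle''. The results you cite, \cite[Theorem 5.8, Corollary 5.9]{MN2008}, do not give convergence of $w$ in the frame moving with speed $c^\dag$ to a bounded translate of the traveling wave $\bar u$; as recorded in the paper, they only give that $R_\delta(t)$ propagates asymptotically with speed $c^\dag$ for \emph{sufficiently small} $\delta>0$. Upgrading control of the leading edge from small levels to an arbitrary level $\delta_0\in(0,1)$ is precisely the content of the proposition you are proving, so invoking full profile convergence begs the question. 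Your fallback --- excluding an order-$t$ plateau of $w$ at intermediate levels via boundedness of $\Phi_{c'}[w(\cdot+c't,t)]$ --- does not work in the generality required: for ignition and monostable nonlinearities one has $V\le 0$ on $[0,1]$, so such a plateau carries no positive contribution to $\Phi_{c'}$ and produces no contradiction with monotonicity of the weighted energy; even in the bistable case it could only exclude plateaus at levels below $\theta^\ast$, where $V>0$.

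The paper closes exactly this gap by a different, self-contained variational construction: for each $c\in(0,c^\dag)$ it minimizes $\Phi_c$ over functions with values in $[0,1]$ supported in $[-R,0]$, obtaining nontrivial minimizers $\bar u_c^R$ that are stationary solutions of \eqref{referenceframe} with Dirichlet conditions, hence compactly supported subsolutions of \eqref{main} traveling with speed $c$. As $R\to\infty$ these converge strongly in $H^1_c(\mathbb R)$ to the half-line minimizer $\bar u_c$, which tends to $1$ at $-\infty$, so $\|\bar u_c^R\|_{L^\infty}\to 1$; choosing $R$ with $\|\bar u_c^R\|_{L^\infty}>\delta_0$ and sliding this traveling subsolution under $u$ at a sufficiently large time (possible since $u\to 1$ locally uniformly) yields $R_{\delta_0}(t)\ge ct$ directly, with no profile-convergence input. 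To repair your argument you would need either to carry out such a subsolution construction or to supply an independent proof that intermediate level sets cannot lag behind the low ones, neither of which is currently in your write-up.
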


  \begin{proof}
    Consider minimizers of $\Phi_c$ among $u \in X$, where $X$
    consists of all functions in $H^1_c(\mathbb R)$ with values in
    $[0,1]$ that vanish for all $x > 0$. We claim that a non-trivial
    minimizer $\bar u_c \in X$ of $\Phi_c$ exists for all $c \in (0,
    c^\dag)$. Indeed, by the argument in the proof of
    \cite[Proposition 5.5]{MN2008}, we have $\inf_{u \in X} \Phi_c[u]
    < 0$ for any $c \in (0, c^\dag)$. By boundedness of $u \in X$,
    $\Phi_c$ is coercive on $X$. Existence of a minimizer then follows
    from weak sequential lower semicontinuity of $\Phi_c$ on $X$ (see
    \cite[Lemma 5.3]{LMN2008}). Furthermore, by \cite[Corollary
    6.8]{LMN2008}, which can be easily seen to be applicable to $\bar
    u_c$, we have $\bar u_c(x) \to 1$ as $x \to -\infty$.

    Similarly, for large enough $R > 0$ there exists a non-trivial
    minimizer $\bar u_c^R \in X_R$ of $\Phi_c$, where $X_R$ is a
    subset of $X$ with all functions vanishing for $x < -R$ as
    well. These are stationary solutions of \eqref{referenceframe}
    with Dirichlet boundary conditions at $x = 0$ and $x = -R$, and by
    strong maximum principle we have $\bar u_c^R < 1$. Furthermore, if
    $R_n \to \infty$, then $\{ \bar u_c^{R_n} \}$ constitute a
    minimizing sequence for $\Phi_c$ in $X$ and, in view of the
    continuity of $\int_{-\infty}^0 e^{cx} V(u) dx$ with respect to
    the weak convergence in $H^1_c(\mathbb R)$ we have $\bar u_c^{R_n}
    \to \bar u_c$ strongly in $H^1_c(\mathbb R)$ and, by Sobolev
    imbedding, also locally uniformly. In particular, $\| \bar
    u_c^{R_n} \|_{L^\infty(\mathbb R)} \to 1$ as $n \to \infty$. The
    proof is then completed by using $\bar u_c^{R_n}$ with a large
    enough $n$ depending on $\delta_0$ as a subsolution after a
    sufficiently long time $t$.
 \end{proof}

 \begin{rmk}
   If in Proposition \ref{p:propi} we also have $\phi \in
   L^2_c(\mathbb R)$ for some $c > c^\dag$, then by \cite[Proposition
   5.2]{MN2008} for every $\delta_0 > 0$ and every $c' > c^\dag$ there
   exists $T \geq 0$ such that $R_\delta(t) < c' t$ for every $\delta
   \geq \delta_0$, implying that $c^\dag$ is the sharp propagation
   velocity for the level sets in the above sense. The same conclusion
   also holds for the ``trailing edge'', i.e. the leading edge defined
   using $u(-x, t)$ instead of $u(x, t)$, indicating the formation of
   a pair of counter-propagating fronts with speed $c^\dag$.
 \end{rmk}

 \begin{rmk}
   Under hypothesis (SD), the conclusion of Proposition \ref{p:propi}
   clearly implies propagation in the sense defined in the
   introduction.
 \end{rmk}

 The difficult part in applying Proposition \ref{p:propi} is to
 establish that $u(x, t) \to 1$ locally uniformly in $x \in \mathbb R$
 as $t \to \infty$ for a given initial condition $\phi(x)$. In the
 absence of such a result, we can still appeal to a weaker notion of
 propagation of the leading edge analyzed in \cite{M2004}. Following
 \cite{M2004}, we call the solution $u(x, t)$ of \eqref{main} and
 (\ref{initial}) \emph{wave-like}, if there exist constants $c > 0$
 and $T \geq 0$ such that $\phi \in L^2_c(\mathbb R)$ and
 $\Phi_c[u(\cdot,T)]<0$. Note that by monotonicity of $\Phi_c[\tilde
 u(\cdot, t)]$ and the fact that $\Phi_c[u(\cdot, t)] = e^{c^2 t}
 \Phi_c[\tilde u(\cdot, t)]$, it follows that for a wave-like solution
 we have $\Phi_c[u(\cdot, t)] < 0$ for all $t \geq T$ as well.  This
 fact allows to obtain an important characterization of the leading
 edge dynamics for wave-like solutions which is intimately related to
 the gradient descent structure of \eqref{referenceframe}. We note
 that in view of the ``hear-trigger effect'' discussed in the
 introduction in the case when $u = 0$ is linearly unstable
 \cite{AW1975}, we only need to consider the nonlinearities satisfying
 $f'(0) \leq 0$.

\begin{prop}
  \label{p:muratov}
  Let $f$ satisfy \eqref{condition}, let $f'(0) \leq 0$, and let
  $u(x,t)$ be a wave-like solution of (\ref{main}) and
  \eqref{initial}.  Then there exists a constant $\delta_0 > 0$ such
  that
  \begin{equation}
    \label{largepotential}
    V(u)\geq-\frac{1}{8} {c}^2 u^2\qquad \forall 0\leq u \leq \delta_0,
  \end{equation}
  and
  \begin{equation}
    \max_{x\in\mathbb{R}}u(x,t)\geq\delta_0,
  \end{equation}
  for $t \geq T$. Furthermore, there exists $R_0 \in \mathbb R$
    such that for every $\delta \in (0, \delta_0]$ we have
    \begin{align}
      \label{eq:Rdbelow}
      R_{\delta}(t)\geq{ct+R_0},
    \end{align}
    for all $t\geq T$.
\end{prop}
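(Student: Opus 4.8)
The plan is to prove the three assertions in turn: \eqref{largepotential} is elementary, the lower bound on $\max_x u$ follows from an exponentially weighted Poincaré inequality, and the leading-edge estimate \eqref{eq:Rdbelow} exploits the gradient-descent structure of \eqref{referenceframe} in the co-moving frame. For \eqref{largepotential}: since $f\in C^1([0,\infty))$ with $f(0)=0$ and $f'(0)\le 0$, Taylor's theorem gives $V(u)=-\tfrac12 f'(0)\,u^2+o(u^2)$ as $u\to 0^+$ with non-negative quadratic coefficient, so $V(u)/u^2\to -\tfrac12 f'(0)\ge 0$ and there is $\delta_0>0$ with $V(u)\ge -\tfrac18 c^2 u^2$ on $[0,\delta_0]$. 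This $\delta_0$ is then used throughout, with no further shrinking.

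Next I would prove $\max_{x\in\mathbb R}u(x,t)\ge\delta_0$ for $t\ge T$ (one may take $T>0$) by contradiction. If $0\le u(x,t)\le\delta_0$ for all $x$ at some $t\ge T$, then \eqref{largepotential} gives $\Phi_c[u(\cdot,t)]\ge \tfrac12\int_{\mathbb R}e^{cx}u_x^2\,dx-\tfrac18 c^2\int_{\mathbb R}e^{cx}u^2\,dx$. The substitution $w:=e^{cx/2}u(\cdot,t)$, which lies in $H^1(\mathbb R)$ because $u(\cdot,t)\in H^1_c(\mathbb R)$ by Proposition \ref{p:exist}, yields after an integration by parts (whose boundary terms vanish: at $-\infty$ since $e^{cx}\to 0$ with $u$ bounded, at $+\infty$ since $w\in H^1$) the identity $\int_{\mathbb R}w_x^2\,dx=\int_{\mathbb R}e^{cx}u_x^2\,dx-\tfrac14 c^2\int_{\mathbb R}e^{cx}u^2\,dx$; hence $\int e^{cx}u_x^2\,dx\ge\tfrac14 c^2\int e^{cx}u^2\,dx$ and $\Phi_c[u(\cdot,t)]\ge 0$, contradicting the fact (recorded before the statement) that wave-like solutions obey $\Phi_c[u(\cdot,t)]<0$ for all $t\ge T$. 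The same remark shows $V_{\min}:=\min_{[0,\bar u]}V<0$, where $\bar u:=\max\{1,\|\phi\|_{L^\infty(\mathbb R)}\}$ bounds $u$: were $V\ge 0$ on $[0,\bar u]$, then $\Phi_c\ge 0$ always.

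For \eqref{eq:Rdbelow} I would pass to the frame moving with speed $c$. Fix $\delta\in(0,\delta_0]$ and set $\tilde u(x,t):=u(x+ct,t)$, so that the leading edge of $\tilde u(\cdot,t)$ is $\tilde R_\delta(t):=R_\delta(t)-ct$; for $t\ge T$ this is finite, since the relevant level set is nonempty by the previous step and $\tilde u(x,t)\to 0$ as $x\to+\infty$ (bounded, uniformly continuous, and in $L^2$ near $+\infty$ because $\tilde u(\cdot,t)\in L^2_c(\mathbb R)$). By continuity $\tilde u(\tilde R_\delta(t),t)=\delta$ and $\tilde u(\cdot,t)\le\delta\le\delta_0$ on $[\tilde R_\delta(t),\infty)$. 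I would then split $\Phi_c[\tilde u(\cdot,t)]$ at $\tilde R_\delta(t)$: on $[\tilde R_\delta(t),\infty)$ the half-line version of the substitution $w=e^{cx/2}\tilde u$ (now producing a positive endpoint term $\tfrac{c}{4}e^{c\tilde R_\delta(t)}\delta^2$) together with \eqref{largepotential} gives $\int_{\tilde R_\delta(t)}^\infty e^{cx}\big(\tfrac12\tilde u_x^2+V(\tilde u)\big)\,dx\ge\tfrac{c}{4}e^{c\tilde R_\delta(t)}\delta^2\ge 0$, while on $(-\infty,\tilde R_\delta(t)]$ the bounds $\tfrac12\tilde u_x^2\ge 0$, $V(\tilde u)\ge V_{\min}$ and $\int_{-\infty}^{\tilde R_\delta(t)}e^{cx}\,dx=\tfrac1c e^{c\tilde R_\delta(t)}$ give $\int_{-\infty}^{\tilde R_\delta(t)}e^{cx}\big(\tfrac12\tilde u_x^2+V(\tilde u)\big)\,dx\ge\tfrac{V_{\min}}{c}e^{c\tilde R_\delta(t)}$. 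Adding, $\Phi_c[\tilde u(\cdot,t)]\ge\tfrac{V_{\min}}{c}e^{c\tilde R_\delta(t)}$. On the other hand, by \eqref{eq:dPhicdt} the function $t\mapsto\Phi_c[\tilde u(\cdot,t)]$ is non-increasing, so $\Phi_c[\tilde u(\cdot,t)]\le\Phi_c[\tilde u(\cdot,T)]=e^{-c^2T}\Phi_c[u(\cdot,T)]<0$. Since $V_{\min}<0$, these combine to $e^{c\tilde R_\delta(t)}\ge c\,|\Phi_c[\tilde u(\cdot,T)]|/|V_{\min}|$, i.e. $\tilde R_\delta(t)\ge R_0$ with $R_0:=\tfrac1c\log\!\big(c\,|\Phi_c[\tilde u(\cdot,T)]|/|V_{\min}|\big)$, independent of $\delta$ and $t$; unwinding, $R_\delta(t)=\tilde R_\delta(t)+ct\ge ct+R_0$ for all $t\ge T$.

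The main obstacle is the weighted Poincaré identity and the careful bookkeeping of boundary terms in the integrations by parts, which rely on the exponentially weighted Sobolev regularity provided by Proposition \ref{p:exist}; conceptually, the key realization is that one must work in the co-moving frame and use the monotonicity of $\Phi_c[\tilde u(\cdot,t)]$, since a purely stationary estimate of the kind used for the second assertion would bound $R_\delta(t)$ below only by a constant rather than by $ct+R_0$.
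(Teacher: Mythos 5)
Your proposal is correct. The paper itself offers no argument for this proposition beyond a citation to \cite{M2004} (Proposition 4.10 and Theorem 4.11 there), so there is nothing internal to compare against line by line; what you have written is a faithful, self-contained reconstruction of the variational mechanism that citation rests on. All three steps check out: the Taylor-expansion choice of $\delta_0$ uses only $f\in C^1$, $f(0)=0$, $f'(0)\le 0$; the weighted Poincar\'e inequality $\int e^{cx}u_x^2\,dx\ge \tfrac{c^2}{4}\int e^{cx}u^2\,dx$ via $w=e^{cx/2}u$ is legitimate because Proposition \ref{p:exist} puts $u(\cdot,t)$ in $H^1_c(\mathbb R)\cap H^1(\mathbb R)$ so the boundary terms vanish, and it yields $\Phi_c\ge 0$ whenever $u\le\delta_0$ everywhere, contradicting $\Phi_c[u(\cdot,t)]<0$ for $t\ge T$; and the split of $\Phi_c[\tilde u(\cdot,t)]$ at the leading edge, with the endpoint term $\tfrac{c}{4}e^{c\tilde R_\delta(t)}\delta^2\ge 0$ on the right and the crude bound $\tfrac{V_{\min}}{c}e^{c\tilde R_\delta(t)}$ on the left, combined with the monotonicity of $t\mapsto\Phi_c[\tilde u(\cdot,t)]$, gives a $\delta$- and $t$-independent lower bound on $\tilde R_\delta(t)$ exactly as claimed. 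Your observation that $V_{\min}<0$ is forced by wave-likeness is also needed and correctly justified. The one thing worth recording explicitly if this were written into the paper is that the level set $\{x:\tilde u(x,t)\ge\delta\}$ is nonempty, closed and bounded above (so that $\tilde u(\tilde R_\delta(t),t)=\delta$), which you do address via the decay of $\tilde u$ at $+\infty$ coming from $H^1_c$ membership.
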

\begin{proof}
  The statement is a direct consequence of \cite[Proposition 4.10 and
  Theorem 4.11]{M2004}, which remain valid under the assumptions above
  in view of Proposition \ref{p:exist}.
\end{proof}

\noindent One of the goals of our analysis in the next sections will
be to show that under further assumptions on the nonlinearities and
hypothesis (SD) propagation in the sense of Proposition
\ref{p:muratov} implies propagation in the sense of Proposition
\ref{p:propi}.

A key ingredient of our proofs that allows us to efficiently use
variational methods and to go from sequential limits to full limits as
$t \to \infty$ without much information about the limit states relies
on an interesting observation regarding uniform H\"older continuity of
the solutions of \eqref{main} with bounded energy. This result is
stated in the following proposition. We note that a more general
result is also available in $\mathbb R^N$ (it will be discussed in
more detail elsewhere).

\begin{prop}
  \label{holder}
  Suppose that $\phi$ satisfies \eqref{initial} and $f$ satisfies
  \eqref{condition}. If $E[u(\cdot,t)]$ is bounded from below, then
  $u(x,\cdot) \in C^{1/4}([T, \infty))$ for each $x \in \mathbb R$ and
  each $T > 0$. Moreover, the corresponding H\"{o}lder constant of
  $u(x, t)$ converges to $0$ as $T \rightarrow\infty$ uniformly in
  $x$.
\end{prop}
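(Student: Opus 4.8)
The plan is to control the time modulus of continuity of $u$ at a fixed point by exploiting the energy dissipation identity \eqref{dEdt} together with standard parabolic regularity. The starting observation is that since $E[u(\cdot,t)]$ is non-increasing in $t$ (by \eqref{dEdt}) and bounded from below by hypothesis, the limit $E_\infty := \lim_{t\to\infty} E[u(\cdot,t)]$ exists and is finite, and integrating \eqref{dEdt} gives
\[
\int_T^\infty \int_{\mathbb R} u_t^2(x,t)\,dx\,dt = E[u(\cdot,T)] - E_\infty < \infty,
\]
so that $\int_{\mathbb R} u_t^2(x,t)\,dx \to 0$ along a sequence of times, and more importantly the tail $\int_T^\infty \|u_t(\cdot,t)\|_{L^2}^2\,dt \to 0$ as $T\to\infty$. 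This is the only place where the global (energy) information enters; the rest is local.

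Next I would use standard parabolic regularity: since $u$ is uniformly bounded on $\mathbb R\times[\tau,\infty)$ for any $\tau>0$, the derivatives $u_t, u_x, u_{xx}$, and by differentiating the equation also $u_{xt}$, are uniformly bounded there. In particular $\|u_{xt}(\cdot,t)\|_{L^\infty(\mathbb R)} \leq M$ for $t\geq \tau$, so for each fixed $t$ the function $x \mapsto u_t(x,t)$ is $M$-Lipschitz. Combining this with the $L^2$-smallness just obtained is the heart of the argument: if $\|u_t(\cdot,t)\|_{L^2}$ is small and $u_t(\cdot,t)$ is uniformly Lipschitz in $x$, then $\|u_t(\cdot,t)\|_{L^\infty}$ is small. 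Quantitatively, if $g$ is $M$-Lipschitz and $|g(x_0)| = a$, then $|g| \geq a/2$ on an interval of length $a/M$ around $x_0$, so $\|g\|_{L^2}^2 \geq (a/2)^2 \cdot (a/M) = a^3/(4M)$; hence $\|g\|_{L^\infty}^3 \leq 4M \|g\|_{L^2}^2$. Applying this with $g = u_t(\cdot,t)$ gives
\[
\|u_t(\cdot,t)\|_{L^\infty(\mathbb R)}^3 \leq 4M\,\|u_t(\cdot,t)\|_{L^2(\mathbb R)}^2 \qquad (t\geq\tau).
\]

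To turn the $L^2$-in-time smallness into a pointwise-in-time decay of $\|u_t(\cdot,t)\|_{L^\infty}$, I would combine the above interpolation with the uniform bound on $\|u_{tt}\|_{L^\infty}$ (again from parabolic regularity applied to the differentiated equation): the function $t\mapsto \|u_t(\cdot,t)\|_{L^2}^2$ has bounded derivative, so from $\int_T^\infty \|u_t(\cdot,t)\|_{L^2}^2\,dt \to 0$ one deduces $\sup_{t\geq T}\|u_t(\cdot,t)\|_{L^2}^2 \to 0$ as $T\to\infty$ by a standard argument (if the sup stayed bounded below along $t_k\to\infty$, the integral would diverge). Feeding this back through the interpolation inequality yields $\sup_{t\geq T}\|u_t(\cdot,t)\|_{L^\infty(\mathbb R)} =: \eta(T) \to 0$ as $T\to\infty$. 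Then for any $x$ and $T\leq t_1 < t_2$,
\[
|u(x,t_2) - u(x,t_1)| = \left| \int_{t_1}^{t_2} u_t(x,s)\,ds \right| \leq \eta(T)\,(t_2 - t_1),
\]
which already gives Lipschitz (hence $C^{1/4}$, indeed $C^\alpha$ for every $\alpha\le 1$) continuity of $u(x,\cdot)$ on $[T,\infty)$, with Hölder constant bounded by $\eta(T)\to 0$ uniformly in $x$.

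The main obstacle I anticipate is the passage from $L^2$-in-time integrability of $\|u_t(\cdot,t)\|_{L^2}^2$ to the uniform-in-$t$ decay of this quantity; this requires the bound on $\|u_{tt}\|_{L^\infty}$ or at least on $\frac{d}{dt}\|u_t(\cdot,t)\|_{L^2}^2$, which in turn needs a little care since $u_{tt}$ is only controlled for $t$ bounded away from $0$ — but that is exactly why the statement restricts to $[T,\infty)$ with $T>0$. A secondary technical point is justifying differentiation under the integral sign in $\frac{d}{dt}\|u_t\|_{L^2}^2 = 2\int u_t u_{tt}\,dx$; this is legitimate because $u \in C((0,\infty);H^2(\mathbb R))$ and $u_t \in C((0,\infty);H^1(\mathbb R))$ by Proposition \ref{p:exist}, and standard parabolic estimates give the needed decay of $u_t, u_{tt}$ as $|x|\to\infty$ to control the spatial tails. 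The exponent $1/4$ in the statement is not sharp for this argument (we get Lipschitz), so presumably it is stated that way only because $1/4$ is all that is needed later, or because a weaker hypothesis is being accommodated; in any case the argument above delivers at least $C^{1/4}$.
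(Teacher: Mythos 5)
Your overall strategy --- derive a uniform-in-time decay of $\|u_t(\cdot,t)\|_{L^\infty(\mathbb R)}$ and then integrate in $t$ --- is genuinely different from the paper's and would, if completed, give a stronger (Lipschitz) conclusion. But the key step is not justified as written. Your passage from $\int_T^\infty\|u_t(\cdot,t)\|_{L^2}^2\,dt<\infty$ to $\sup_{t\ge T}\|u_t(\cdot,t)\|_{L^2}\to0$ rests on ``$\|u_{tt}\|_{L^\infty}$ bounded, hence $t\mapsto\|u_t(\cdot,t)\|_{L^2}^2$ has bounded derivative.'' That implication fails on an unbounded spatial domain: $\frac{d}{dt}\int_{\mathbb R}u_t^2\,dx=2\int_{\mathbb R}u_tu_{tt}\,dx$ is not controlled by sup-norms alone, and no uniform $L^2$ or $L^1$ bound on $u_{tt}$ is supplied; moreover, uniform pointwise bounds on $u_{tt}$ (and on $u_{xt}$, which your interpolation inequality needs) are not part of the ``standard parabolic regularity'' the paper invokes, which covers only $u_t$, $u_x$, $u_{xx}$, and they are delicate when $f$ is merely $C^1$. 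The gap is repairable: testing the equation $w_t=w_{xx}+f'(u)w$ satisfied by $w=u_t$ against $w$ itself gives the one-sided Gronwall inequality $\frac{d}{dt}\tfrac12\|u_t(\cdot,t)\|_{L^2}^2\le\mathcal{K}\|u_t(\cdot,t)\|_{L^2}^2$, which together with time-integrability does force $\|u_t(\cdot,t)\|_{L^2}\to0$; but this identity, and the uniform bound on $u_{xt}$, would have to be proved rather than cited. As it stands the proof is incomplete.

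For comparison, the paper's proof avoids all of this machinery. It bounds $\int_{x_0}^{x_0+1}|u(x,t_2)-u(x,t_1)|\,dx$ from above by $\sqrt{(E[u(\cdot,T)]-E_\infty)(t_2-t_1)}$ using only Cauchy--Schwarz and the dissipation identity, and from below by $|u(x_0,t_2)-u(x_0,t_1)|^2/(4M)$ using only the uniform spatial Lipschitz bound $M$ on $u(\cdot,t)$. Comparing the two gives the $C^{1/4}$ estimate with constant $2\sqrt{M}\,(E[u(\cdot,T)]-E_\infty)^{1/4}$ directly, with no second-order regularity, no pointwise-in-time decay of $\|u_t(\cdot,t)\|_{L^2}$, and no interpolation. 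If you wish to keep your route, supply the $L^2$ energy identity for $u_t$ and the gradient bound on $u_t$; otherwise the averaged-in-space argument is the cheaper path.
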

\begin{proof}
  We denote $E_{\infty}=\displaystyle
  \lim_{t\rightarrow\infty}E[u(\cdot,t)]$. Then, using \eqref{dEdt},
  for any $x_0 \in \mathbb{R}$ and $t_2>t_1\geq T$ we have
  \begin{eqnarray}
    \int_{x_0}^{{x_0}+1}|u(x,t_2)-u(x,t_1)|dx&\leq
    &\int_{t_1}^{t_2}\int_{x_0}^{{x_0}+1}|u_t(x,t)|dxdt\nonumber\\
    &\leq&\sqrt{t_2-t_1} \left( \int_{t_1}^{t_2}\int_{x_0}^{{x_0}+1}
      u^2_t(x,t)dxdt \right)^{1/2}\nonumber\\
    &\leq&\sqrt{t_2-t_1} \left( \int_{T}^{\infty}\int_{\mathbb{R}}
      u^2_t(x,t)dxdt \right)^{1/2}\nonumber\\
    &=&\sqrt{(E[u(\cdot,T)]-E_{\infty})(t_2-t_1)}.
  \end{eqnarray}

  On the other hand, by standard parabolic regularity there exists $M
  > 0$ such that
  \begin{equation}
    \|u_x(\cdot,t) \|_{L^{\infty}(\mathbb{R})} \leq M, \quad \| u
    (\cdot,
    t) \|_{L^{\infty}(\mathbb{R})} \leq M  \qquad
    \forall t\geq T,
  \end{equation}
  Without loss of generality we can further assume that
  $u(x_0,t_2)-u(x_0,t_1)\in [0, M]$.  Then, for every $x \in I$,
    where
  \begin{equation}
    I := [x_0,x_0+\frac{u(x_0,t_2)-u(x_0,t_1)}{2M}], \qquad |I| < 1,
  \end{equation}
  we have
  \begin{equation}
    u(x,t_2) \geq u(x_0,t_2)-M(x-x_0) \geq u(x_0,t_1)+M(x-x_0) \geq
    u(x,t_1),\; x\in I.
  \end{equation}
  This implies that
   \begin{eqnarray}
    \int_{x_0}^{{x_0}+1}|u(x,t_2)-u(x,t_1)|dx &\geq&
    \int_{I}(u(x_0,t_2)-u(x_0,t_1)-2M(x-x_0))dx\nonumber\\
    &=&\frac{|u(x_0,t_2)-u(x_0,t_1)|^2}{4M}.\end{eqnarray}
    Then we have
  \begin{equation}
    |u(x_0,t_2)-u(x_0,t_1)| \leq 2\sqrt{M}
    (E[u(\cdot,T)]-E_{\infty})^{1/4}(t_2-t_1)^{1/4},
  \end{equation}
  i.e. $u(x,\cdot) \in C^{1/4}([T, \infty))$ by the arbitrariness
  of $x_0$. Moreover, the limit of the H\"{o}lder constant is
  \begin{equation}
    \lim_{T\rightarrow\infty}2\sqrt{M}
    (E[u(\cdot,T)]-E_{\infty})^{1/4}=0,
  \end{equation}
  which completes the proof.
\end{proof}

\section{Bistable Nonlinearity}
\label{s:bistable}

We now turn our attention to the study of the bistable nonlinearity,
i.e.
$f\in{C^1([0,\infty);\mathbb{R})}$, \begin{equation}\label{bistable}
  f(0)=f(\theta_0)=f(1)=0, \quad f(u)\left\{\!\!\! \begin{array}{ll}
      <0, & in \; (0,\theta_0)\cup(1,\infty),\\
      >0, & in \; (\theta_0,1), \end{array}\right. \end{equation} for
some $\theta_0\in(0,1)$. In the following, we assume an extra
condition that the $u = 1$ equilibrium is more energetically favorable
than the $u = 0$ equilibrium, i.e.,
\begin{equation}
  \label{negpotential}
  V(1)=-\int_{0}^{1}f(s)ds<0.
\end{equation}
Actually, in the context of threshold phenomena this is not a
restriction, since propagation (in the sense defined in the
introduction) becomes impossible in the opposite case. Indeed, if the
inequality opposite to \eqref{negpotential} holds, then we have $V(u)
\geq 0$ for all $u \geq 0$ and, therefore, $R_\delta \leq c t$ for any
$\delta > 0$, any $c > 0$ and large enough $t$, at least for all $\phi
\in L^2_c(\mathbb R)$ by \cite[Proposition 5.2]{MN2008}. Furthermore,
if $V(1) > 0$ and $f'(0) < 0$ (the latter condition is not essential
and may be replaced by a weaker non-degeneracy condition introduced in
the next paragraph), then the energy functional in \eqref{E} is
coercive in $H^1(\mathbb R)$, and so it is not difficult to see that
every solution of \eqref{main} and \eqref{initial} converges uniformly
to zero, implying extinction for all initial data. Thus the only case
in which the situation may be subtle is that of a {\em balanced}
bistable nonlinearity, i.e., when $V(1) = 0$, in which spreading,
i.e. sublinear behavior of the leading edge with time, namely
$R_\delta(t) \to \infty$ as $t \to \infty$, but $R_\delta(t) = o(t)$
for some $\delta > 0$, cannot be excluded a priori, even for
exponentially decaying initial data. The analysis of the balanced case
is beyond the scope of the present paper.

We further make a kind of weak non-degeneracy assumption that $f(u)
\simeq -k u^p$ for some $p \geq 1$ and $k > 0$ as $u \to 0$. More
precisely, we assume that
\begin{align}
  \label{eq:fconc}
  f'(u) \leq 0 \quad \text{for all} ~ u \in [0, \theta_1], ~ \text{for
    some} ~\theta_1 > 0,
\end{align}
and
\begin{equation}
  \label{power}
  \lim_{u \to 0} {f(u) \over u^p}  = -k \quad \text{for some} ~  p
  \geq  1 ~\text{and}~ k > 0.
\end{equation}
Note that \eqref{eq:fconc} and \eqref{power} are automatically
satisfied for the generic non-degenerate case when $f'(0) < 0$.  Under
conditions (\ref{bistable}) and (\ref{negpotential}), there exist two
roots of $V(u)$: $u=0$, $u=\theta^{\ast}\in(0,1)$, and possibly a
third root $u=\theta^{\diamond}>1$. However, since by
\eqref{condition} we have
$\displaystyle \limsup_{t\rightarrow\infty}\|u(x,t)\|_{L^{\infty}(\mathbb{R})}\leq1$,
without loss of generality, in the latter case we may suppose that
$\|\phi\|_{L^{\infty}(\mathbb{R})}<\theta^{\diamond}$. This implies
that once $u>\theta^{\ast}$, we have $V(u)<0$.

It is well known that under our assumptions (\ref{stationary})
  possesses ``bump'' solutions, i.e., classical positive solutions of
  \eqref{stationary} that vanish at infinity. After a suitable
  translation, these solutions are known to be symmetric decreasing
  and unique (see, e.g., \cite[Theorem 5]{BL1983}).  In the following
  proposition we summarize the properties of the bump solution that
  are needed for our analysis.

\begin{prop}
  \label{p:bump}
  Let $f$ satisfy conditions (\ref{condition}) and (\ref{bistable})
  through (\ref{power}), and let $v \in C^2(\mathbb R)$ be the unique
  positive symmetric decreasing solution of (\ref{stationary}). Then
  \begin{enumerate}
  \item $v(0) = \theta^\ast$ and $E_0 := E[v] >0$.
  \item If $f'(0) < 0$, we have $v(x), v'(x), v''(x) \sim e^{-\mu
      |x|}$ for $\mu = \sqrt{|f'(0)|}$ as $|x| \to \infty$.
  \item If $f'(0) = 0$, then $v(x) \sim |x|^{-{2 \over p - 1}}$,
    $v'(x) \sim |x|^{-{p+1 \over p - 1}}$, $v''(x) \sim |x|^{-{2 p
        \over p - 1}}$ as $|x| \to \infty$.
  \item $f'(v(\cdot)) \in L^1(\mathbb R)$ and $v' \in H^1(\mathbb R)$
  \end{enumerate}
\end{prop}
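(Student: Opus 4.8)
The plan is to analyze the ODE \eqref{stationary} by the standard phase-plane/first-integral method, exploiting the conservation law $\tfrac12 (v')^2 + V(v) = \text{const}$ along trajectories. Multiplying \eqref{stationary} by $v'$ and integrating from $x$ to $+\infty$, using $v(+\infty) = v'(+\infty) = 0$ (which holds for a bump solution by standard ODE arguments on vanishing solutions), gives the identity
\begin{equation}
  \label{eq:firstintegral}
  \tfrac12 (v'(x))^2 + V(v(x)) = 0 \qquad \text{for all } x \in \mathbb R.
\end{equation}
Evaluating \eqref{eq:firstintegral} at $x = 0$, where by symmetric decreasing monotonicity $v'(0) = 0$, yields $V(v(0)) = 0$, so $v(0)$ is a positive root of $V$; since $v$ vanishes at infinity and $v' < 0$ on $(0,\infty)$ we must have $0 < v(x) < v(0)$ for $x > 0$, and by the discussion preceding the proposition (and the normalization $\|\phi\|_{L^\infty} < \theta^\diamond$) the only admissible value is $v(0) = \theta^\ast$. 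For the energy, rewrite $E_0 = E[v] = \int_{\mathbb R}\big(\tfrac12 (v')^2 + V(v)\big)\,dx$; using \eqref{eq:firstintegral} to replace $V(v)$ by $-\tfrac12(v')^2$ would give $E_0 = 0$, which is wrong, so instead one uses the Pohozaev-type/second identity: from \eqref{eq:firstintegral}, $E_0 = \int_{\mathbb R}(v')^2\,dx + \int_{\mathbb R}\big(V(v) - \tfrac12(v')^2\big)\,dx$; a cleaner route is to note $\int_{\mathbb R} V(v)\,dx = -\tfrac12\int_{\mathbb R}(v')^2\,dx$ gives $E_0 = \tfrac12\int_{\mathbb R}(v')^2\,dx > 0$ since $v$ is non-constant. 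That $v' \not\equiv 0$ is immediate, so $E_0 > 0$, proving item 1.

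For the decay estimates (items 2 and 3), I would linearize \eqref{stationary} near the equilibrium $v = 0$. When $f'(0) < 0$, writing $\mu = \sqrt{|f'(0)|}$, the linearized equation $w'' = \mu^2 w$ has the stable manifold spanned by $e^{-\mu x}$, and since $v \to 0$ monotonically it must lie on this stable manifold; a standard ODE perturbation argument (e.g., via the integral equation $v(x) = v(x_0) e^{-\mu(x-x_0)} + \int$ of the nonlinear remainder, using $f(v) + f'(0) v = o(v)$) upgrades this to $v(x) \sim C e^{-\mu x}$, and differentiating the ODE gives the matching asymptotics for $v'$ and $v''$. When $f'(0) = 0$, condition \eqref{power} gives $f(v) \simeq -k v^p$, so near infinity \eqref{stationary} behaves like $v'' = k v^p$; the self-similar ansatz $v(x) = a |x|^{-2/(p-1)}$ solves this balance exactly when $a^{p-1} = \tfrac{2(p+1)}{k(p-1)^2}$, and one then shows (again via a contraction/fixed-point argument on the error, or via the first integral \eqref{eq:firstintegral} which near $0$ reads $\tfrac12(v')^2 \simeq \tfrac{k}{p+1} v^{p+1}$, i.e.\ $-v'/v^{(p+1)/2} \simeq \sqrt{2k/(p+1)}$, integrable to the stated power law) that $v$ is asymptotic to this profile; the asymptotics for $v', v''$ follow by differentiation.

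Item 4 is then a matter of collecting the decay rates. For $f'(0) < 0$: $f'(v(x))$ decays exponentially (since $f' \in C^0$, $f'(0) = $ finite, and actually $f'(v) \to f'(0)$, so one needs $f'(v) - f'(0)$ decays — but $f'(v(\cdot))$ itself need not be in $L^1$ unless $f'(0) = 0$; here one should instead observe that \eqref{eq:fconc} and the integrability come from $f'(v) \le 0$ near infinity together with $|f'(v)| \le C v^{p-1}$ from \eqref{power} when $p > 1$, and exponential decay when $f'(0) < 0$ dominates). The main obstacle I anticipate is precisely this borderline bookkeeping for $f'(v(\cdot)) \in L^1(\mathbb R)$ in the degenerate case $f'(0) = 0$ with $p$ close to $1$: one has $|f'(v(x))| \lesssim v(x)^{p-1} \sim |x|^{-2(p-1)/(p-1)} = |x|^{-2}$, which is integrable at infinity for every $p > 1$, so in fact there is no real obstruction — the rate $|x|^{-2}$ is uniform in $p$. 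Finally, $v' \in H^1(\mathbb R)$ means $v', v'' \in L^2$; the decay rates in items 2–3 give $v' \sim e^{-\mu|x|}$ or $|x|^{-(p+1)/(p-1)}$ (square-integrable since $2(p+1)/(p-1) > 2$) and similarly for $v''$, so $v' \in H^1(\mathbb R)$ follows. The trickiest point to get right is making the asymptotic matching arguments in items 2 and 3 rigorous (stable-manifold theorem versus explicit first-integral integration); I would favor the first-integral route since \eqref{eq:firstintegral} gives an exact separable ODE $dv/dx = -\sqrt{-2V(v)}$ on $(0,\infty)$ whose behavior near $v = 0$ is controlled directly by \eqref{power}.
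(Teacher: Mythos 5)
Your overall route is the same as the paper's: derive the first integral of \eqref{stationary}, read off $v(0)=\theta^\ast$ and the positivity of $E_0$, and then integrate the resulting separable ODE to extract the decay rates from the behavior of $V$ near $u=0$ given by \eqref{power}. However, your first integral has the wrong sign, and this propagates into a genuinely incorrect derivation of $E_0>0$. Since $V(u)=-\int_0^u f(s)\,ds$, multiplying \eqref{stationary} by $v'$ gives $\frac{d}{dx}\bigl(\tfrac12 (v')^2 - V(v)\bigr)=0$, so the correct identity is $\tfrac12 (v')^2 = V(v)$ (note that $V\ge 0$ on $[0,\theta^\ast]$ for a bistable $f$ satisfying \eqref{negpotential}, consistent with $|v'|=\sqrt{2V(v)}$ and \eqref{eq:vint}), not $\tfrac12 (v')^2 = -V(v)$. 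With the correct sign, $E_0=\int_{\mathbb R}\bigl(\tfrac12 (v')^2+V(v)\bigr)\,dx=\int_{\mathbb R}(v')^2\,dx>0$ is immediate. Your detour (``replacing $V(v)$ by $-\tfrac12(v')^2$ would give $E_0=0$'') and the subsequent claim that $\int_{\mathbb R}V(v)\,dx=-\tfrac12\int_{\mathbb R}(v')^2\,dx$ yields $E_0=\tfrac12\int_{\mathbb R}(v')^2\,dx$ are both artifacts of this sign error, and are mutually inconsistent: the latter identity would force $E_0=0$. Likewise your separable ODE should read $dv/dx=-\sqrt{2V(v)}$ on $(0,\infty)$; with your sign the radicand is negative. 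Once the sign is fixed, your asymptotic analysis (linearization for $f'(0)<0$; the balance $v''\simeq kv^p$ and the profile $a|x|^{-2/(p-1)}$ for $f'(0)=0$) is correct and is exactly what the paper means by ``a careful analysis of the singularity in the integral in \eqref{eq:vint}''; you in fact supply more detail than the paper does.

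A point in your favor: you correctly observe that item 4 as literally stated cannot hold when $f'(0)<0$, since then $f'(v(x))\to f'(0)\neq 0$; what is needed (and what is actually used in the proof of Theorem \ref{sharpbistable}) is $f'(v(\cdot))-f'(0)\in L^1(\mathbb R)$, which follows from the exponential decay in item 2. In the degenerate case your bookkeeping $|f'(v(x))|\lesssim v(x)^{p-1}\sim |x|^{-2}$ gives integrability, though strictly speaking \eqref{power} controls $f$ and not $f'$, so a short additional argument (e.g.\ a change of variables $u=v(x)$ with $dx=du/\sqrt{2V(u)}$ followed by integration by parts, using \eqref{eq:fconc}) is needed to justify that bound.
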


\begin{proof}
  The fact that $v(0) = \theta^\ast$ follows from \cite[Theorem
  5]{BL1983}. Integrating \eqref{stationary} once, we obtain $|v'| =
  \sqrt{2 V(v)}$, where by the previous result the constant of
  integration is zero.  Upon second integration we arrive at
  \begin{align}
    \label{eq:vint}
    |x| = \int_v^{\theta^\ast} {d u \over \sqrt{2 V(u)}}.
  \end{align}
  The proof then follows by a careful analysis of the singularity in
  the integral in \eqref{eq:vint} to establish the decay of the
  solution. Once the decay is known, the rest of the statements
  follows straightforwardly.
\end{proof}

Our main theorems in this section are about the following
convergence and equivalence conclusions.

\begin{thm}
  \label{thmbistable}
  Let $f$ satisfy conditions (\ref{condition}) and (\ref{bistable})
  through (\ref{power}). Let $\phi(x)$ satisfy condition
  (\ref{initial}) and hypothesis (SD). Then one of the following
  holds.
  \begin{enumerate}
  \item   $\lim_{t\rightarrow\infty}u(x,t)=1$ locally uniformly in
    $\mathbb{R}$,
  \item $\lim_{t\rightarrow\infty}u(x,t)=v(x)$ uniformly in
    $\mathbb{R}$,
  \item $\lim_{t\rightarrow\infty}u(x,t)=0$ uniformly in
  $\mathbb{R}$.
  \end{enumerate}
\end{thm}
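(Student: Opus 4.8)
The plan is to exploit the gradient‑flow identity \eqref{dEdt} together with hypothesis (SD), which (by the remark following Proposition \ref{p:exist}) keeps $u(\cdot,t)$ symmetric decreasing for all $t>0$, so that $\|u(\cdot,t)\|_{L^\infty(\mathbb R)}=u(0,t)$, and then to run a compactness/$\omega$‑limit argument (using standard parabolic regularity and Proposition \ref{holder}). Since for $t>0$ we have $u(\cdot,t)\in H^2(\mathbb R)$ and, by \eqref{eq:fconc}--\eqref{power}, $V(u(\cdot,t))\in L^1(\mathbb R)$, the energy $E[u(\cdot,t)]$ is finite; being non‑increasing by \eqref{dEdt}, it has a limit $E_\infty\in[-\infty,E[u(\cdot,1)]]$. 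I would split the argument according to whether $E_\infty>-\infty$ or $E_\infty=-\infty$.

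In the case $E_\infty>-\infty$ one has $\int_T^\infty\!\!\int_{\mathbb R}u_t^2\,dx\,dt=E[u(\cdot,T)]-E_\infty<\infty$, so for any $t_n\to\infty$ the time‑translates $u(\cdot,t_n+\cdot)$ are precompact in $C^{2,1}_{\mathrm{loc}}(\mathbb R\times\mathbb R)$ and the finiteness of the dissipation integral forces every subsequential limit $w$ to satisfy $w_t\equiv0$; thus $w(\cdot)=w_0$ is a bounded, symmetric decreasing, $[0,1]$‑valued stationary solution of \eqref{main} (here $w\le1$ uses $\limsup_{t\to\infty}\|u(\cdot,t)\|_{L^\infty}\le1$ from \eqref{condition}). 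A phase‑plane analysis of \eqref{stationary}, with $v(0)=\theta^\ast$ and the uniqueness of the bump (Proposition \ref{p:bump} and \cite{BL1983}), shows that the only such solutions are $0$, $\theta_0$, $1$ and $v$, and these are pairwise separated in $C^2_{\mathrm{loc}}$. Since the $\omega$‑limit set of the continuous precompact trajectory $t\mapsto u(\cdot,t)$ in $C^2_{\mathrm{loc}}$ is connected, it reduces to one of these four, i.e. $u(\cdot,t)\to w_0\in\{0,\theta_0,1,v\}$ in $C^2_{\mathrm{loc}}$. The value $w_0=\theta_0$ is ruled out: then $u(R,t)\to\theta_0<\theta^\ast$, hence $V(u(x,t))\ge0$ for $|x|\ge R$ and $t$ large, so $E[u(\cdot,t)]\ge\int_{-R}^{R}V(u(x,t))\,dx\to2R\,V(\theta_0)>0$ for every $R$, contradicting $E_\infty<\infty$. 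In the remaining three cases I would upgrade the local convergence using only monotonicity: $w_0=0$ gives $\|u(\cdot,t)\|_{L^\infty}=u(0,t)\to0$ (case 3); $w_0=v$ gives, for each $R>0$, $\sup_{|x|\ge R}|u(x,t)-v(x)|\le u(R,t)+v(R)\to2v(R)$ together with uniform convergence on $[-R,R]$, hence $u(\cdot,t)\to v$ uniformly on $\mathbb R$ after letting $R\to\infty$ (case 2); $w_0=1$ is case 1 directly.

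In the case $E_\infty=-\infty$ I would use $V(u)\ge0$ on $[0,\theta^\ast]$ and $V(u)\ge V(1)$ otherwise: writing $\{x:u(x,t)\ge\theta^\ast\}=[-r^\ast(t),r^\ast(t)]$, one gets $E[u(\cdot,t)]\ge 2V(1)\,r^\ast(t)$, whence $r^\ast(t)\to\infty$, so $u(\cdot,t)\ge\theta^\ast$ on arbitrarily long intervals for all large $t$. I would then compare, on each $(-L,L)$ and for $t$ large, with the solution of \eqref{main} with Dirichlet datum $\theta^\ast$ on $\{\pm L\}$ and initial datum $\theta^\ast$: it is increasing in time (since $f(\theta^\ast)>0$) and converges to a stationary solution $\underline U_L\ge\theta^\ast$, which is nondecreasing in $L$, with $\lim_{L\to\infty}\underline U_L$ a symmetric decreasing stationary solution on $\mathbb R$ that is $\ge\theta^\ast$ everywhere — hence $\equiv1$ by the classification above. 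This yields $\liminf_{t\to\infty}u(x,t)\ge\underline U_L(x)\to1$ as $L\to\infty$ for every $x$, and combined with $\limsup_{t\to\infty}\|u(\cdot,t)\|_{L^\infty}\le1$ this is case 1. (Alternatively, one may simply invoke the classical bistable threshold result of Aronson and Weinberger \cite{AW1975} at this point.)

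The principal difficulty is precisely this passage from sequential, local information about the solution to its full convergence; the decisive ingredients are the dichotomy on the limit energy $E_\infty$, the use of (SD) both to make the set of admissible limit profiles discrete and to convert local control of a monotone profile into control of its tails, and the ODE classification $\{0,\theta_0,1,v\}$ of symmetric decreasing stationary states combined with connectedness of the $\omega$‑limit set.
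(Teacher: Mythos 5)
Your argument is correct, but it reaches the trichotomy by a genuinely different route in both halves. For the case $E_\infty=-\infty$ you bypass the paper's variational machinery entirely (Lemmas \ref{wavelike}, \ref{spread}, \ref{test} and Proposition \ref{p:muratov}, i.e.\ the wave-like property and the leading-edge estimate \eqref{eq:Rdbelow}) and instead use the elementary pointwise bound $E[u(\cdot,t)]\geq 2V(1)\,r^\ast(t)$ coming from (SD) and $V\geq 0$ on $[0,\theta^\ast]$, $V\geq V(1)$ elsewhere, to force the super-level set $\{u\geq\theta^\ast\}$ to spread; the concluding comparison with Dirichlet problems on $(-L,L)$ is then the same device as in Lemma \ref{spread}. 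For the case $E_\infty>-\infty$ you replace the paper's combination of the boundary-flux identity \eqref{intleft} (which pins $u(0,t_n)$ to a zero of $V$ and thereby excludes $\theta_0$ automatically), the ODE continuous-dependence estimate of Lemma \ref{tobump}, and the H\"older-in-time upgrade of Proposition \ref{holder}, by an $\omega$-limit-set argument: precompactness in $C^{2,1}_{\mathrm{loc}}$, finiteness of the dissipation integral, a phase-plane classification of symmetric decreasing stationary states as $\{0,\theta_0,1,v\}$, connectedness of the $\omega$-limit set, and a separate energy argument to exclude $\theta_0$. Both routes are sound; the classification and the connectedness step you assert are standard and do hold here (the first-integral $\tfrac12(v')^2=V(v)-V(v(0))$ rules out any other monotone bounded profile). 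What the paper's longer route buys, and yours does not, is the quantitatively stronger Corollary \ref{spreadcondition} --- that $E[u(\cdot,T)]<0$ at a \emph{single} time already implies propagation --- which is what is actually needed later for Theorem \ref{theorembistable} and for the openness of $\Sigma_1$ in Theorem \ref{sharpbistable}; your dichotomy on $E_\infty$ alone would not suffice there. Conversely, your approach is more self-contained for the trichotomy itself, needing neither the exponentially weighted functionals $\Phi_c$ nor the results imported from \cite{M2004}.
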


\noindent We will prove Theorem \ref{thmbistable} together with
establishing the following one-to-one relation between the long time
behavior of the solutions and those of their energy $E$.

\begin{thm}
  \label{theorembistable}
  Under the same assumptions as in Theorem \ref{thmbistable}, we have
  the following three alternatives:
  \begin{enumerate}
  \item $\displaystyle \lim_{t\rightarrow\infty}u(x,t)=1$ locally uniformly in
  $\mathbb{R}$ ${\Leftrightarrow}$
  $\displaystyle \lim_{t\rightarrow\infty}E[u(\cdot,t)]=-\infty$.
\item $\displaystyle \lim_{t\rightarrow\infty}u(x,t)=v(x)$ uniformly
  in $\mathbb{R}$ ${\Leftrightarrow}$ $\displaystyle
  \lim_{t\rightarrow\infty}E[u(\cdot,t)]=E_0$.
\item $\displaystyle \lim_{t\rightarrow\infty}u(x,t)=0$ uniformly in
  $\mathbb{R}$ ${\Leftrightarrow}$ $\displaystyle
  \lim_{t\rightarrow\infty}E[u(\cdot,t)]=0$.
  \end{enumerate}
\end{thm}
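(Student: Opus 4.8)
The plan is to prove Theorems \ref{thmbistable} and \ref{theorembistable} simultaneously by exploiting the gradient-flow structure and the symmetric-decreasing constraint, organizing the argument around the limit energy $E_\infty := \lim_{t\to\infty} E[u(\cdot,t)]$ (which exists in $[-\infty, E[\phi]]$ by \eqref{dEdt}). First I would dispose of the case $E_\infty = -\infty$: here \eqref{dEdt} forces $\int_T^\infty \|u_t(\cdot,t)\|_{L^2}^2\,dt$ to be finite on every finite interval but the energy to be unbounded below, so $V(u)$ must become very negative on large sets; since $V(u) \geq 0$ for $u \leq \theta^\ast$ and $V$ is bounded on $[0,1]$, this requires $u$ to exceed $\theta^\ast$ on intervals of unbounded length, and with (SD) this means $u(0,t)$ stays bounded away from $\theta^\ast$ while the superlevel set $\{u \geq \theta^\ast + \epsilon\}$ has length $\to \infty$. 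A subsolution/sweeping argument (or invoking the wave-like machinery: $\Phi_c[u(\cdot,T)] < 0$ for suitable $c$, then Proposition \ref{p:propi} once we upgrade to local uniform convergence to $1$) then gives propagation. Conversely, if propagation holds, then $u \to 1$ locally uniformly, so on $[-L,L]$ we have $\int_{-L}^L V(u)\,dx \to 2L\,V(1) < 0$ with $L$ arbitrary, giving $E_\infty = -\infty$.

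Next I would treat the case $E_\infty \in (-\infty, \infty)$, which by the above must be a finite number, and extract the limit profile. Using Proposition \ref{holder}, the map $t \mapsto u(x,t)$ is uniformly H\"older with constant $\to 0$, and by standard parabolic regularity $u(\cdot,t)$ is bounded in $C^2$; moreover $\|u_t(\cdot,t)\|_{L^2} \to 0$ along a sequence $t_n \to \infty$ by the finiteness of the dissipation integral. Passing to a subsequence, $u(\cdot,t_n) \to w$ in $C^2_{loc}$ where $w$ is a symmetric decreasing solution of \eqref{stationary} with $0 \leq w \leq 1$. The classification of such $w$ is the crux: either $w \equiv 0$, or $w \equiv 1$ (excluded once we know $u \not\to 1$ — but here one must rule out convergence to $1$ only locally while $E_\infty$ finite, which is where (SD) and the decay estimates of Proposition \ref{p:bump}(2)--(3) enter to control the tails), or $w$ is a bump, hence $w = v$ by uniqueness. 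The H\"older-in-time estimate with vanishing constant then promotes the subsequential limit to a full limit: if $u(\cdot, t_n) \to v$ for one sequence, the smallness of $|u(x,t) - u(x,t_n)|$ for $t$ near $t_n$, combined with a connectedness/continuity argument in $t$, forces $u(\cdot,t) \to v$ for all $t \to \infty$, and similarly for the $w \equiv 0$ case. This simultaneously yields the trichotomy of Theorem \ref{thmbistable} and the energy identifications $E_\infty = E_0$ (case 2) and $E_\infty = 0$ (case 3), since $E[\cdot]$ is continuous along the convergence (using the tail control to pass the $H^1$ norm and the $\int V$ integral to the limit).

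The main obstacle I anticipate is the tail control needed to (a) exclude the ``local convergence to $1$ with finite energy'' pseudo-case and, more seriously, (b) pass from $C^2_{loc}$ convergence of $u(\cdot,t_n)$ to convergence in $H^1(\mathbb R)$ and convergence of the energy, rather than merely $\liminf$. Fatou-type semicontinuity gives only $E[w] \leq E_\infty$ for free; the reverse requires ruling out energy escaping to $x = \pm\infty$. Here (SD) is essential: the symmetric-decreasing property means the behavior at spatial infinity is governed by $u(x,t)$ for large $x$, which is small, and one can use a comparison with the linearized (or, when $f'(0) = 0$, the model power) equation to get a uniform-in-$t$ integrable tail bound of the type $u(x,t) \leq C(1+|x|)^{-2/(p-1)}$ or $Ce^{-\mu|x|}$ matching Proposition \ref{p:bump}, and similarly for $u_x$ via parabolic estimates. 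This uniform tail decay makes $\{u(\cdot,t)\}_{t \geq T}$ precompact in $H^1(\mathbb R)$ and makes $E$ continuous along convergent subsequences, closing both directions of each equivalence. The remaining implications in Theorem \ref{theorembistable} — e.g. $E_\infty = 0 \Rightarrow u \to 0$ uniformly — then follow because $E_\infty = 0$ excludes cases 1 and 2 (whose limit energies are $-\infty$ and $E_0 > 0$), leaving only case 3; the strict positivity $E_0 > 0$ from Proposition \ref{p:bump}(1) is what makes the three energy values mutually exclusive and the correspondence one-to-one.
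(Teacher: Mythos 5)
Your outline reproduces much of the paper's architecture (energy trichotomy, subsequential limits upgraded to full limits via the H\"older-in-time estimate of Proposition \ref{holder}, tail control for the energy identities), but there is a genuine gap at the implication ``$u\to1$ locally uniformly $\Rightarrow E[u(\cdot,t)]\to-\infty$''. Your argument is that $\int_{-L}^L V(u)\,dx\to 2LV(1)$ with $L$ arbitrary. This only controls the energy of the restriction to $[-L,L]$; the full energy also contains $\int_{|x|>L}\bigl(\tfrac12 u_x^2+V(u)\bigr)dx$, and while the gradient part is uniformly bounded by (SD) and parabolic regularity, the potential part over the region $\{u\le\theta^\ast\}$ is nonnegative and has no a priori uniform-in-$t$ bound (the tail of a spreading solution need not be uniformly integrable). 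So local convergence to $1$ does not by itself force the total energy to $-\infty$. The paper's Lemma \ref{unboundedenergy} is precisely the resolution of this difficulty, and it is not a tail estimate at all: it argues by contradiction that if $E$ were bounded below then $\int_T^\infty\int_{\mathbb R}u_t^2\,dx\,dt$ would be arbitrarily small for large $T$, while the leading edge $R_{\theta^\ast}(t)$ advances linearly (via the truncation and wave-like machinery), so that $u$ must rise from below $\theta_0/2$ to above $\theta^\ast$ on a spatial interval of length of order $cT$ within time $T$; Cauchy--Schwarz then makes $\iint|u_t|$ simultaneously at least $cT\theta_0/8$ and at most $cT\theta_0/9$. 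Some substitute for this argument (or a genuine uniform-in-$t$ tail bound under mere local convergence to $1$) is needed; without it, part 1 of the theorem is proved in only one direction.

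A secondary issue: in classifying the subsequential $C^2_{\mathrm{loc}}$ limits $w$ when $E_\infty$ is finite, you list $0$, $1$ and the bump $v$, but $w\equiv\theta_0$ is also a bounded symmetric decreasing stationary solution and must be excluded. The paper does this automatically by multiplying \eqref{main} by $u_x$ and integrating over $(-\infty,0)$, which yields $-V(u(0,t))=\int_{-\infty}^0 u_xu_t\,dx\to0$ along a sequence, so that $u(0,t_n)$ converges to a zero of $V$ (i.e.\ $0$ or $\theta^\ast$) rather than merely a zero of $f$; this device, absent from your proposal, is what makes the finite-energy dichotomy clean. (One can alternatively exclude $w\equiv\theta_0$ by noting that it would force $E[u(\cdot,t_n)]\ge\int_{-L}^LV(u(x,t_n))\,dx\gtrsim LV(\theta_0)\to\infty$ as $L\to\infty$, contradicting monotonicity of the energy, but some argument is required.) The rest of your plan---in particular the heat-kernel and linearized comparisons for the tail of $\int V(u)$, and the observation that the three limit energies $-\infty$, $E_0>0$ and $0$ are distinct so that the converse implications follow from the trichotomy---is consistent with the paper's Lemmas \ref{etozero}--\ref{etoeo2}.
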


The strategy of our proof is as follows. We wish to show that the
limit behaviors of the energy in Theorem \ref{theorembistable} are the
only possible ones. So we first prove that if $E[u(\cdot,t)]$ is not
bounded from below, then $u$ converges to $1$ locally uniformly. And
the reverse also holds. Then for bounded from below $E[u(\cdot,t)]$,
the solution $u(x, t)$ converges to either $0$ or $v(x)$. Finally, the
convergence of $u(x, t)$ to $0$ or $v(x)$ implies the corresponding
convergence of energy.

Let us begin by assuming that $E[u(\cdot,t)]$ is not bounded from
below. In this case, for cubic nonlinearity Flores proved in
\cite{Fl1989} that $\displaystyle \lim_{t\rightarrow\infty}u(x,t)=1$ locally
uniformly by constructing a proper subsolution. Under (SD), we will
prove a stronger conclusion. We will prove that if there exists
$T\geq0$ such that $E[u(\cdot,T)]<0$, then propagation occurs, in the
sense defined in the introduction.  Throughout the rest of this
section, the assumptions of the above theorems are always assumed to
be satisfied, and $u(x, t)$ always refers to the solutions of
\eqref{main} and \eqref{initial}.

\begin{lemma}
  \label{wavelike}
  Suppose there exists $c_0>0$ such that
  $\phi(x)\in{H^1_{c_0}(\mathbb{R})}$. If there exists $T\geq0$ such
  that $E[u(\cdot,T)]<0$, then $u(x,t)$ is wave-like.
\end{lemma}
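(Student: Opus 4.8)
The plan is to obtain wave-likeness by sending the exponential weight to zero. The key observation is that the weighted energy $\Phi_c$ degenerates to the ordinary energy $E$ as $c\to 0^+$, so that the strict inequality $E[u(\cdot,T)]<0$ is inherited by $\Phi_c[u(\cdot,T)]$ for all sufficiently small $c>0$, which is exactly what the definition of a wave-like solution requires (together with $\phi\in L^2_c(\mathbb R)$).

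First I would reduce to the case $T>0$: since $t\mapsto E[u(\cdot,t)]$ is non-increasing by \eqref{dEdt}, we have $E[u(\cdot,t)]<0$ for all $t\ge T$, so $T$ may be replaced by any larger, in particular positive, time. Then, because $\phi\in H^1_{c_0}(\mathbb R)\subset L^2_{c_0}(\mathbb R)$, Proposition \ref{p:exist} gives $u(\cdot,T)\in H^2(\mathbb R)\cap H^2_{c_0}(\mathbb R)\cap L^\infty(\mathbb R)$; in particular $u(\cdot,T)\in H^1(\mathbb R)\cap H^1_{c_0}(\mathbb R)$ with $0\le u(\cdot,T)\le M:=\max\{1,\|\phi\|_{L^\infty(\mathbb R)}\}$. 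I would also record the elementary bound $|V(s)|\le Cs^2$ on $[0,M]$, which holds because $f\in C^1$ with $f(0)=0$ makes $V$ a $C^2$ function with $V(0)=V'(0)=0$. Consequently the nonnegative function $w:=\tfrac12 u_x^2(\cdot,T)+|V(u(\cdot,T))|$ is bounded by $\tfrac12 u_x^2(\cdot,T)+Cu^2(\cdot,T)$, whence $w\in L^1(\mathbb R)$ and, in addition, $e^{c_0 x}w\in L^1(0,\infty)$ — the latter precisely because $u(\cdot,T)\in H^1_{c_0}(\mathbb R)$.

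The main step is the limit $\Phi_c[u(\cdot,T)]\to E[u(\cdot,T)]$ as $c\to 0^+$. Writing the difference as $\int_{\mathbb R}(e^{cx}-1)\bigl(\tfrac12 u_x^2(x,T)+V(u(x,T))\bigr)\,dx$ and splitting at $x=0$, the integrand is dominated by $w$ on $(-\infty,0)$ (since $|e^{cx}-1|\le1$ there) and by $e^{c_0 x}w$ on $(0,\infty)$ (since $0\le e^{cx}-1\le e^{c_0 x}$ there whenever $0<c\le c_0$); both dominating functions are $c$-independent and integrable, while the integrand tends to $0$ pointwise, so dominated convergence completes this step. Hence there is $c\in(0,c_0]$ with $\Phi_c[u(\cdot,T)]<0$, and for such $c$ one has $\phi\in L^2_c(\mathbb R)$ because $\phi\in L^2(\mathbb R)$ (by \eqref{initial}) and $\phi\in L^2_{c_0}(\mathbb R)$, using the convexity inequality $e^{cx}\le\tfrac{c}{c_0}e^{c_0 x}+1$. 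This is the definition of a wave-like solution.

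I do not anticipate a genuine obstacle here. The only point that needs care is the dominated-convergence bookkeeping on the half-line $x>0$: this is exactly where the hypothesis $\phi\in H^1_{c_0}(\mathbb R)$ (rather than merely $\phi\in L^2(\mathbb R)\cap L^\infty(\mathbb R)$) is used, together with the parabolic smoothing of Proposition \ref{p:exist} to upgrade $\phi$ to $u(\cdot,T)\in H^1(\mathbb R)\cap H^1_{c_0}(\mathbb R)$ for $T>0$; the membership $\phi\in L^2_c(\mathbb R)$ for the small $c$ selected similarly relies on the $L^2$ bound from \eqref{initial} to control the behavior as $x\to-\infty$.
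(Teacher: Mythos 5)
Your proof is correct, and the underlying mechanism is the same as the paper's: shrink the exponential weight, using $e^{cx}\le 1$ on the left half-line and $e^{cx}\le e^{c_0x}$ on the right half-line so that the negative bulk of $E[u(\cdot,T)]$ survives in $\Phi_c[u(\cdot,T)]$. The execution differs in one substantive way. The paper splits $\mathbb R$ at $\pm L$ and controls the two tail integrals by their \emph{sign}: it chooses $L$ so large that $V(u(x,T))\ge 0$ for $|x|\ge L$, which relies on $u(x,T)\to 0$ at infinity together with $V\ge 0$ near $u=0$ --- a feature of the bistable nonlinearity of Section \ref{s:bistable}; the nonnegative weighted tails then only decrease when $c_0$ is replaced by $c\le c_0$ on the right and by $1$ on the left, and a fixed $\varepsilon/4+\varepsilon/4$ versus $-\varepsilon/2$ bookkeeping closes the argument. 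You instead prove the genuine limit $\Phi_c[u(\cdot,T)]\to E[u(\cdot,T)]$ as $c\to 0^+$ by dominated convergence, with the domination supplied by $|V(u)|\le Cu^2$ (from $f\in C^1$, $f(0)=0$) and the membership $u(\cdot,T)\in H^1(\mathbb R)\cap H^1_{c_0}(\mathbb R)$ furnished by Proposition \ref{p:exist} after pushing $T$ forward. Because your tail control is sign-agnostic, the identical argument covers the monostable and ignition cases, for which the paper needs the separate Lemma \ref{wavelikemonostable} with a different arrangement of signs; this is a modest but real gain in generality. Your remaining reductions (monotonicity of $E$ to take $T>0$, and $L^2\cap L^2_{c_0}\subset L^2_c$ via convexity of the exponential to verify the $\phi\in L^2_c(\mathbb R)$ clause in the definition of wave-like) are sound.
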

\begin{proof}
  First observe that if $\phi(x)\in{H_{c_0}^1(\mathbb{R})}$, then
  $u(\cdot, T)\in{H^1(\mathbb{R})} \cap{H_{c_0}^1(\mathbb{R})}$. Then
  for any small $\varepsilon>0$, if $E[u(x,T)]=-\varepsilon<0$ there
  exists $L>0$ such that $V(u(x,T))\geq0$ for $|x|\geq{L}$, and
  \begin{eqnarray}
    \int_{\{x\leq-L\}} \left( \frac{1}{2}u_x^2(x,T)+V(u(x,T)) \right)
    dx<\frac{\varepsilon}{4}, \\
    \int_{\{x\geq{L}\}}e^{c_0x} \left(
      \frac{1}{2}u_x^2(x,T)+V(u(x,T)) \right) dx
    <\frac{\varepsilon}{4}.
  \end{eqnarray}
  Note that if we use smaller positive $c$ instead $c_0$ in the above
  inequality, the inequality still holds. And by the definition of $L$
  we know that
  \begin{equation}
    \int_{\{|x|<L\}} \left( \frac{1}{2}u_x^2(x,T)+V(u(x,T)) \right)
    dx<-\varepsilon.
  \end{equation}
  So we can find a sufficiently small $c \in (0, c_0)$ such that
  \begin{equation}
    \int_{\{|x|<L\}} e^{c x} \left( \frac{1}{2}u_x^2(x,T)+V(u(x,T))
    \right) dx  < -\frac{\varepsilon}{2},
  \end{equation}
  and
  \begin{equation}
    \Phi_{c}[u(\cdot,T)]= \int_{\mathbb{R}}e^{c x} \left(
      \frac{1}{2}u_x^2(x,T)+V(u(x,T)) \right) dx<0.
  \end{equation}
  So $u$ is wave-like.
\end{proof}

We next show that for symmetric decreasing solutions and bistable
nonlinearities the wave-like property also implies propagation in the
sense of the introduction.

\begin{lemma}
  \label{spread}
  Suppose that $u(x, t)$ is wave-like. Then
  $\displaystyle \lim_{t\rightarrow\infty}u(x,t)=1$ locally uniformly in
  $\mathbb{R}$.
\end{lemma}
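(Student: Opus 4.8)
The plan is to combine the lower bound on the leading edge from the wave-like property (Proposition \ref{p:muratov}) with the symmetric-decreasing structure and the energy dissipation inequality \eqref{dEdt} to force convergence to $1$ locally uniformly. First I would record that, since $u$ is wave-like, there exist $c > 0$, $T \geq 0$ and $\delta_0 > 0$ such that $R_\delta(t) \geq ct + R_0$ for all $\delta \in (0, \delta_0]$ and $t \geq T$, and also $\max_{x} u(x,t) \geq \delta_0$; under (SD) the maximum is attained at $x = 0$, so $u(0,t) \geq \delta_0$ for all $t \geq T$, and the symmetric-decreasing property together with $R_\delta(t) \to \infty$ says that for any fixed compact set $[-A,A]$ and any fixed $\delta \in (0,\delta_0]$ we have $u(x,t) \geq \delta$ on $[-A,A]$ once $t$ is large. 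In other words, $\liminf_{t\to\infty} u(x,t) \geq \delta_0$ locally uniformly; the content of the lemma is to upgrade $\delta_0$ to $1$.

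Next I would pass to the $\omega$-limit set. Since $E[u(\cdot,t)]$ is bounded below on a wave-like trajectory (indeed $\Phi_c[u(\cdot,t)] < 0$ but $E$ itself is bounded below because $V$ is bounded below and the trajectory is bounded in $H^1$ by standard parabolic regularity), \eqref{dEdt} gives $\int_T^\infty \int_{\mathbb R} u_t^2 \, dx\, dt < \infty$, so along a sequence $t_n \to \infty$ we have $\int_{\mathbb R} u_t^2(\cdot,t_n) \to 0$. Using standard parabolic regularity and the compactness it provides (uniform $C^2$ bounds on $u(\cdot,t)$ for $t \geq T$), a subsequence of $u(\cdot,t_n)$ converges in $C^2_{loc}(\mathbb R)$ to a function $w \in C^2(\mathbb R)$ which, by the $u_t$ bound, is a bounded stationary solution: $w'' + f(w) = 0$. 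Moreover $w$ is symmetric decreasing (each $u(\cdot,t_n)$ is), satisfies $0 \leq w \leq 1$, and $w(0) \geq \delta_0$. By classification of bounded symmetric decreasing nonnegative stationary solutions of \eqref{stationary} under \eqref{bistable}--\eqref{power}, the only possibilities are $w \equiv 0$, $w \equiv 1$, or $w = v$, the bump. Since $w(0) \geq \delta_0 > 0$ we have $w \not\equiv 0$. To exclude $w = v$ I would use the wave-like hypothesis more quantitatively: $\Phi_c[u(\cdot,T)] < 0$ for some $c > 0$ forces, via Proposition \ref{p:muratov}, the leading edge to travel at least linearly, $R_\delta(t_n) \geq c t_n + R_0 \to \infty$, whereas the bump $v$ has $v(x) \to 0$ as $|x| \to \infty$, so $R_\delta(t_n) \to R_\delta(v) < \infty$ for small $\delta$ — a contradiction once the convergence $u(\cdot,t_n) \to v$ is upgraded to the level-set statement. (One must be slightly careful here: $C^2_{loc}$ convergence alone does not immediately control $R_\delta(t_n)$, but $R_\delta(v) < \infty$ together with the decreasing profile of $v$ and local uniform convergence gives $R_\delta(t_n) \leq R_{\delta/2}(v) + 1 < \infty$ for $n$ large, which contradicts the linear lower bound.) Hence $w \equiv 1$.

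Finally, since every subsequential $C^2_{loc}$ limit of $\{u(\cdot,t_n)\}$ along the "good" sequence equals $1$, we get $u(\cdot,t_n) \to 1$ locally uniformly along that sequence. To promote this to a full limit $t \to \infty$ I would use monotonicity of the energy together with Proposition \ref{holder}: since $E[u(\cdot,t)]$ is bounded below, $u(x,\cdot)$ is uniformly (in $x$) Hölder-$1/4$ on $[T,\infty)$ with Hölder constant tending to $0$ as $T \to \infty$, so no oscillation in $t$ between the value $1$ (hit along $t_n$) and anything smaller is possible in the long run; combined with $\liminf_t u(x,t) \geq \delta_0$ and an upper bound $\limsup_t u(x,t) \leq 1$, a standard argument pins down $\lim_{t\to\infty} u(x,t) = 1$ locally uniformly. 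The main obstacle, I expect, is the classification/exclusion step — rigorously ruling out the bump $v$ as an $\omega$-limit, i.e. making the heuristic "a wave-like solution outruns the stationary bump" into a clean contradiction — and making sure the passage from sequential to full convergence genuinely uses the (SD) structure and the Hölder-in-time estimate rather than any assumed knowledge of the limit set.
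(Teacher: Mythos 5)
Your first paragraph is fine and matches the paper's starting point: Proposition \ref{p:muratov} plus (SD) gives $u(x,t)\geq\delta_0$ on intervals $|x|\lesssim ct$, with $\delta_0>\theta^\ast$. But the rest of the argument rests on a false premise. You assert that $E[u(\cdot,t)]$ is bounded from below along a wave-like trajectory because ``$V$ is bounded below and the trajectory is bounded in $H^1$.'' Pointwise boundedness of $V$ does not control $\int_{\mathbb R}V(u)\,dx$: for a propagating solution the set where $u$ is close to $1$ grows linearly in $t$, and since $V(1)<0$ one gets $\int_{\mathbb R}V(u(x,t))\,dx\to-\infty$ while $\int u_x^2\,dx$ stays bounded, so $E[u(\cdot,t)]\to-\infty$. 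This is not a technicality — it is exactly the content of Lemma \ref{unboundedenergy}, i.e.\ the conclusion you are trying to prove forces the energy to be unbounded below. Consequently $\int_T^\infty\int_{\mathbb R}u_t^2\,dx\,dt$ need not be finite, there need not exist a sequence $t_n$ with $\|u_t(\cdot,t_n)\|_{L^2(\mathbb R)}\to0$, the sequential limits need not be stationary solutions, and Proposition \ref{holder} (which explicitly assumes $E$ bounded below) cannot be invoked to pass from sequential to full convergence. The entire $\omega$-limit/classification machinery of your second and third paragraphs therefore does not get off the ground. (Minor additional points: your list of symmetric decreasing stationary solutions omits the constant $\theta_0$, though $\delta_0>\theta^\ast>\theta_0$ would exclude it; and the exclusion of the bump via the leading edge is salvageable only because (SD) lets local uniform convergence control $R_\delta$.)

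The paper's upgrade from $\delta_0$ to $1$ is purely comparison-based and avoids energy dissipation entirely: since $\delta_0>\theta^\ast$ and $f(\theta^\ast)>0$, the constant $\theta^\ast$ is a strict subsolution, so the solution $\underline u$ of the Dirichlet problem on $[-L,L]$ with boundary and initial data $\theta^\ast$ increases to the minimal stationary solution $v_L\geq\theta^\ast$ with $v_L(\pm L)=\theta^\ast$; by comparison $\liminf_{t\to\infty}u\geq v_L$ on $[-L,L]$, and elliptic estimates give $v_L\to\bar v$ with $\bar v\geq\theta^\ast$ solving \eqref{stationary} on all of $\mathbb R$, whence $\bar v=1$. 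If you want to keep your structure, you would need to replace the energy-dissipation step by some such trapping argument; as written, the proof does not work.
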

\begin{proof}
  In view of the definition of $\theta^{\ast}$ we have $\delta_0 >
  \theta^\ast$ in Proposition \ref{p:muratov}. Therefore, by that
  proposition
  \begin{equation}
    R_{\theta^{\ast}}(t)>\frac{c t}{2},
  \end{equation}
  for sufficiently large $t$. Then, because $u(x,t)$ is symmetric
  decreasing, for any $L>0$ there exists $T_L>0$ such that
  $u(x,t)>\theta^{\ast}$ on the interval $[-L,L]$, for any
  $t\geq{T_L}$. Now, consider $\underline u(x, t)$ solving
  \eqref{main} with $\underline u(x, T_L) = \theta^\ast$ for all $x
  \in (-L, L)$ and $\underline u(\pm L, t) = \theta^\ast$ for all $t >
  T_L$. Since by our assumption on the nonlinearity the function
  $\underline u(x, T_L)$ is a strict subsolution, in the spirit of
  \cite[Proposition 2.2]{AW1975} we have $\underline u(\cdot, t) \to
  v_L$ uniformly on $[-L, L]$, where $v_L$ solves \eqref{stationary}
  with $v_L(\pm L) = \theta^\ast$. Then, by comparison principle we
  obtain
    \begin{align}
      \label{eq:uto1}
      v_L \leq \liminf_{t \to \infty} u(\cdot, t) \leq \limsup_{t \to
        \infty} u(\cdot, t) \leq 1 \qquad \text{uniformly in} ~ [-L,
      L].
    \end{align}
    Also, by standard elliptic estimates we have $v_L \to \bar v$
    locally uniformly as $L \to \infty$, where $\bar v$ solves
    \eqref{stationary} in the whole of $\mathbb R$. Since by
    construction $\bar v \geq \theta^\ast$, we have in fact $\bar v =
    1$. Then, passing to the limit in \eqref{eq:uto1}, we obtain the
    result.
\end{proof}

Our next Lemma uses a truncation argument to extend the conclusion of
Lemma \ref{spread} to solutions that are not necessarily lying in any
exponentially weighted Sobolev space, but have negative energy at some
time $T \geq 0$.

\begin{lemma}
  \label{test}
  Suppose that there exists $T\geq0$ such that $E[u(\cdot,T)]<0$, then
  $\displaystyle \lim_{t\rightarrow\infty}u(x,t)=1$ locally uniformly in
  $\mathbb{R}$.
\end{lemma}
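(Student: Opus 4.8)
The plan is to reduce to the situation covered by Lemma \ref{spread} by approximating $\phi$ (or rather $u(\cdot, T)$) by a truncated initial datum which does lie in an exponentially weighted space and still has negative energy. Since $E[u(\cdot, T)] < 0$, fix $\varepsilon > 0$ with $E[u(\cdot, T)] < -\varepsilon$. By Proposition \ref{p:exist} we have $u(\cdot, T) \in H^1(\mathbb R) \cap L^\infty(\mathbb R)$, so there exists $L > 0$ such that the tails $\int_{|x| > L} (\tfrac12 u_x^2(x, T) + V(u(x, T)))\, dx$ contribute less than $\varepsilon/2$ in absolute value (using $|V(u)| \le C u^2$ near $0$ together with $u(\cdot, T) \in L^2$). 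Now define a modified datum $\psi(x)$ that agrees with $u(x, T)$ on $[-L, L]$ and is cut off to have compact support — for instance, multiply by a smooth symmetric decreasing cutoff equal to $1$ on $[-L, L]$ and $0$ outside $[-2L, 2L]$, chosen so that the extra gradient energy introduced on $L < |x| < 2L$ is small. Because $u(\cdot, T)$ is symmetric decreasing (by the Remark following Proposition \ref{p:exist}) and the cutoff can be taken symmetric decreasing, $\psi$ is again symmetric decreasing, and since $\psi$ has compact support it lies in $H^1_{c_0}(\mathbb R)$ for every $c_0 > 0$. The point is to arrange $E[\psi] < 0$.

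The key comparison step: $\psi \le u(\cdot, T)$ pointwise, so by the comparison principle the solution $w(x, t)$ of \eqref{main} with $w(\cdot, 0) = \psi$ satisfies $w(x, t) \le u(x, t + T)$ for all $t \ge 0$ and all $x$. By Lemma \ref{wavelike}, $w$ is wave-like (it has compactly supported, hence $H^1_{c_0}$, initial data with negative energy), and by Lemma \ref{spread}, $\lim_{t \to \infty} w(x, t) = 1$ locally uniformly in $\mathbb R$. Therefore $\liminf_{t \to \infty} u(x, t) \ge 1$ locally uniformly; combined with $\limsup_{t \to \infty} \|u(\cdot, t)\|_{L^\infty} \le 1$ from \eqref{condition}, we conclude $\lim_{t \to \infty} u(x, t) = 1$ locally uniformly, which is the claim.

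The main obstacle is the construction of the truncation with $E[\psi] < 0$: one must control simultaneously the loss of negative potential energy from the truncated tails, the spurious gradient energy created by the cutoff on the annulus $L < |x| < 2L$, and the sign of $V$ there. The natural fix is to first choose $L$ large enough (using $u(\cdot,T) \in H^1$) that both $\|u_x(\cdot,T)\|_{L^2(|x|>L)}$ and $\|u(\cdot,T)\|_{L^2(|x|>L)}$ are tiny, then take a cutoff whose derivative is $O(1/L)$ on the annulus so that the added gradient energy is $O(1/L)$; since the reservoir of negative energy is the fixed quantity $\varepsilon$ on $[-L,L]$ (which only grows as $L$ increases, as $V(u(x,T)) \le 0$ wherever $u(x,T) > \theta^\ast$ near the center), all error terms can be made smaller than $\varepsilon/2$, giving $E[\psi] < -\varepsilon/2 < 0$. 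One should also note that $\psi$ inherits $0 \le \psi \le \|\phi\|_{L^\infty}$, so all the standing hypotheses on initial data remain in force for $w$.
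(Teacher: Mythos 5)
Your proposal is correct and follows essentially the same route as the paper: truncate $u(\cdot,T)$ by a symmetric decreasing cutoff so that the compactly supported datum still has negative energy, invoke Lemma \ref{wavelike} and Lemma \ref{spread} for the truncated solution, and conclude by the comparison principle. The only cosmetic difference is that the paper obtains $E[\hat\phi(\cdot;L)]<0$ for large $L$ directly from the convergence $\hat\phi(\cdot;L)\to u(\cdot,T)$ in $H^1(\mathbb R)$ and the continuity of $E$, rather than via your explicit tail and annulus estimates.
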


\begin{proof}
  For any $L>0$, we construct a cutoff function
  $\varphi_{L}(x)=\eta(|x|/L)$, where $\eta$ is a non-increasing
  $C^{\infty}(\mathbb{R})$ function such that $\eta(x)=1$ for $x<1$,
  and $\eta(x)=0$ for $x>2$. Let
  $\hat{\phi}(x;L)=\varphi_{L}(x)u(x,T)$, so that $\hat{\phi}(x;L) \to
  u(x, T)$ in $H^1(\mathbb R)$ as $L \to \infty$. By our assumption
  and continuity of $E$, there exists a sufficiently large $L=L_0$,
  such that $E[\hat{\phi}(x;L_0)]<0$. Note that $\hat{\phi}(x;L_0)$ is
  a compactly supported function, so it lies in $H^1_c(\mathbb{R})$
  for any $c > 0$. Now consider the solution $\hat{u}(x,t)$ which
  satisfies (\ref{main}) with initial condition
  $\hat{u}(x,0)=\hat{\phi}(x;L_0)$. From Lemma \ref{spread}, we know
  that $\displaystyle \lim_{t\rightarrow\infty}\hat{u}(x,t)=1$ locally
  uniformly in $\mathbb{R}$. So by comparison principle $u(x,t+T) \geq
  \hat{u}(x,t)$, which proves the lemma.
\end{proof}

\noindent An obvious corollary to the above lemma is the following.

\begin{cor}
  \label{c:Einf}
  Suppose that $\displaystyle
  \lim_{t\rightarrow\infty}E[u(\cdot,t)]=-\infty$, then $\displaystyle
  \lim_{t\rightarrow\infty}u(x,t)=1$ locally uniformly in
  $\mathbb{R}$.
\end{cor}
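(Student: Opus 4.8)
The plan is simply to reduce to Lemma \ref{test}, of which this is an immediate consequence. Recall first that, by Proposition \ref{p:exist}, $u(\cdot,t)\in H^1(\mathbb R)\cap L^\infty(\mathbb R)$ for every $t>0$, so the energy $E[u(\cdot,t)]$ is finite and well defined for each such $t$, and by the dissipation identity \eqref{dEdt} the map $t\mapsto E[u(\cdot,t)]$ is non-increasing on $(0,\infty)$. Hence the hypothesis $\lim_{t\to\infty}E[u(\cdot,t)]=-\infty$ is meaningful and, in particular, forces $E[u(\cdot,t)]<0$ for all sufficiently large $t$; in particular there exists a finite time $T>0$ with $E[u(\cdot,T)]<0$.

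With such a $T$ in hand, the hypotheses of Lemma \ref{test} are satisfied, so its conclusion applies: $\lim_{t\to\infty}u(x,t)=1$ locally uniformly in $\mathbb R$, which is exactly the assertion of the corollary.

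There is essentially no obstacle here: the substantive content is entirely carried by Lemma \ref{test} (and, through it, by Lemmas \ref{wavelike} and \ref{spread} and Proposition \ref{p:muratov}), and the only thing to notice is that divergence of the energy to $-\infty$ is a much stronger condition than the eventual strict negativity of the energy required in Lemma \ref{test}. For this reason the statement is genuinely a corollary and needs no independent argument.
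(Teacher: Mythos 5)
Your proposal is correct and is exactly the argument the paper intends: it states Corollary \ref{c:Einf} as an immediate consequence of Lemma \ref{test}, since divergence of the energy to $-\infty$ trivially yields some finite $T$ with $E[u(\cdot,T)]<0$. Nothing further is needed.
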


Our next lemma provides a sufficient condition for propagation, which,
in particular, yields a conclusion converse to that of Corollary
\ref{c:Einf}.
\begin{lemma}
  \label{unboundedenergy}
  Suppose that $\displaystyle \lim_{t\rightarrow\infty}u(x,t)=1$
  locally uniformly in $\mathbb{R}$, then $\displaystyle
  \lim_{t\rightarrow\infty}E[u(\cdot,t)]=-\infty$.
\end{lemma}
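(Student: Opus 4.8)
The plan is to show that local uniform convergence of $u(\cdot,t)$ to $1$ forces the energy $E[u(\cdot,t)]$ to diverge to $-\infty$ by exhibiting, on arbitrarily large intervals, a large negative contribution to the energy coming from the potential term $V(u)$, which outweighs the gradient term. The key point is that, because $V(1) < 0$ under \eqref{negpotential}, once $u$ is uniformly close to $1$ on a long interval $[-L,L]$ the integral $\int_{-L}^L V(u)\,dx$ is close to $2L\,V(1) < 0$, and this can be made as negative as we please by taking $L$ large. The gradient contribution over the same interval is harmless because, by standard parabolic regularity, $u_x$ is bounded uniformly, and away from the interval $V(u) \geq 0$ once $u < \theta^\ast$ (as guaranteed by the discussion preceding the lemma, using $\|\phi\|_{L^\infty} < \theta^\diamond$), so the contribution of $|x| > L$ to $E$ is bounded below by a finite quantity independent of $t$.

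More precisely, first I would fix an arbitrary constant $K > 0$ and choose $L$ so large that $2L\,V(1) < -2K$. Next, using local uniform convergence $u(x,t) \to 1$ on $[-L,L]$ together with continuity of $V$, pick $T_0$ so that $V(u(x,t)) < V(1)/2$ (say) for all $x \in [-L,L]$ and $t \geq T_0$; here one must be slightly careful to get the bound uniformly in $x$ on the compact interval, which follows from local uniform convergence and the continuity of $V$ on the relevant compact range of values. This gives $\int_{-L}^{L} V(u(x,t))\,dx \leq L\,V(1) < -K$ — wait, I should instead arrange $\int_{-L}^L V(u)\,dx < -2K$ directly, which is fine by choosing $L$ even larger. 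The gradient term on $[-L,L]$ is nonnegative, so it only helps make $E$ large, not small; that is not a problem since we only need an \emph{upper} bound on $E$. For the region $|x| > L$: since $u(x,t)$ is symmetric decreasing and tends to $1$ at the center, I need to ensure $u$ is below $\theta^\ast$ there so that $V(u) \geq 0$; but this is \emph{not} automatic, since $u$ could be near $1$ on a much larger set. The clean fix is to not split at a fixed $L$ but to observe that $V(u) \geq V(1)$ pointwise for $u \in [0, \theta^\diamond)$ — actually $V$ need not have $V(1)$ as its minimum — so instead use $V(u) \geq -C_0$ for $u \in [0,\|\phi\|_{L^\infty}]$ with $C_0 := \max_{[0,\|\phi\|_{L^\infty}]}(-V)$, giving $\int_{\{\theta^\ast < u\} \setminus [-L,L]} V(u)\,dx \geq -C_0 \cdot |\{u > \theta^\ast\}|$; but this set can be unbounded, so this does not work either.

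The real argument, and the main obstacle, is therefore to bound the gradient contribution globally. Observe that $\int_{\mathbb R} \tfrac12 u_x^2\,dx$ need not stay bounded as $t \to \infty$ — indeed for a traveling-front-like profile it grows linearly in $t$ — so one cannot simply discard it. The resolution is to compare growth rates: on any interval where $u$ transitions from near $1$ down to near $0$, the gradient energy picks up at most a \emph{bounded} amount per such transition (by the standard estimate $\int \tfrac12 u_x^2 + V(u) \geq -(\text{const})$ on any front-like piece, via $|v'| \leq \sqrt{2\max(V^-)}$-type bounds), whereas the bulk region where $u \approx 1$ contributes $\approx 2L\,V(1)$, which dominates. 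Concretely, by symmetric-decreasing monotonicity let $\ell(t) := R_{\theta^\ast}(t) \to \infty$ (it diverges since $u(\cdot,t)\to 1$ locally uniformly and $u$ is symmetric decreasing); on $[-\ell(t),\ell(t)]$ eventually $V(u) < V(1)/2$, contributing $< \ell(t) V(1) \to -\infty$, while on $|x| > \ell(t)$ we have $u < \theta^\ast$ hence $V(u) \geq 0$, and there $\int_{|x|>\ell(t)} \tfrac12 u_x^2\,dx$ is controlled by $\int_{|x|>\ell(t)} (\tfrac12 u_x^2 + V(u))\,dx$, which is bounded above by (half of) $E[u(\cdot,t)] \leq E[u(\cdot,0)]$ plus $|{\int_{[-\ell(t),\ell(t)]}}(\tfrac12 u_x^2+V(u))|$ — this is circular, so instead I bound $\int_{|x|>\ell(t)}\tfrac12 u_x^2\,dx$ directly: since $u < \theta^\ast < \theta_1$ there and $f' \leq 0$ on $[0,\theta_1]$ by \eqref{eq:fconc}, $V$ is convex and nonnegative there with $V(\theta^\ast)=0$... the cleanest route is $\tfrac12 u_x^2 + V(u) \geq 0$ pointwise on $\{u < \theta^\ast\}$ so $\int_{|x|>\ell(t)}(\tfrac12 u_x^2 + V(u))\,dx \geq 0$, whence
\[
E[u(\cdot,t)] \leq \int_{-\ell(t)}^{\ell(t)} \Big(\tfrac12 u_x^2 + V(u)\Big)\,dx \quad\text{is the WRONG direction.}
\]
Thus the honest statement: $E[u(\cdot,t)] = \int_{|x|\le \ell(t)}(\cdots) + \int_{|x|>\ell(t)}(\cdots) \le \int_{|x|\le\ell(t)}\tfrac12 u_x^2\,dx + \ell(t)V(1) + [\text{nonneg tail}]$, and the tail together with $E$ nondecreasing... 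The decisive extra input I would bring in is energy monotonicity combined with the preceding results: since $u$ is wave-like whenever $E[u(\cdot,T)] < 0$, and here $u(\cdot,t)\to 1$, Proposition \ref{p:propi} applies to give $R_\delta(t) \geq ct$ for any $c \in (0,c^\dag)$ and small $\delta$; pairing this linear spreading with the bound $V(u(x,t)) \le V(1)/2$ on $[-ct, ct]$ for large $t$ yields $\int_{-ct}^{ct} V(u)\,dx \le ct\,V(1)$, while the gradient energy on the two transition layers near $x = \pm R_\delta(t)$ stays bounded uniformly in $t$ by the standard front-energy estimate (each layer carrying at most $\sup_{[0,1]}\sqrt{2 V^+}\cdot 1$ of gradient action, independent of $t$) — here I invoke that outside $[-ct,ct]$ one can split into a bounded-length transition region plus a far region where $\tfrac12 u_x^2 + V \ge 0$. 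Hence $E[u(\cdot,t)] \le ct\,V(1) + C \to -\infty$. I expect the delicate bookkeeping of the transition-layer gradient energy — making rigorous that it does not grow with $t$ — to be the main obstacle, and I would handle it by the comparison/monotonicity structure already developed (Propositions \ref{p:muratov} and \ref{p:propi}) rather than by direct estimation.
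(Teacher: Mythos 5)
Your proposal attempts a direct upper bound $E[u(\cdot,t)]\le c\,t\,V(1)+C$, which is a genuinely different route from the paper: the paper argues by contradiction, assuming $E$ bounded below so that $\int_T^\infty\!\int_{\mathbb R}u_t^2\,dx\,dt<\infty$ by \eqref{dEdt}, then using the cutoff construction together with Lemma \ref{wavelike} and Proposition \ref{p:muratov} to get linear spreading of $R_{\theta^\ast}(t)$, and finally observing that the solution must rise from below $\theta_0/2$ to above $\theta^\ast$ on a spatial interval of length $\sim cT$ within a time window of length $T$, which forces $\iint|u_t|\gtrsim cT\theta_0$ and contradicts the Cauchy--Schwarz bound $\iint|u_t|\le\sqrt{cT\cdot T}\,\bigl(\iint u_t^2\bigr)^{1/2}$. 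Before addressing the gaps, note that the obstacle you identify as ``the main one'' --- the transition-layer gradient energy --- is actually a non-issue and your claim that $\int_{\mathbb R}\tfrac12 u_x^2\,dx$ ``grows linearly in $t$ for a traveling-front-like profile'' is false: under (SD) one has $\int_{\mathbb R}u_x^2\,dx=2\int_0^\infty u_x^2\,dx\le 2\|u_x\|_{L^\infty}\,u(0,t)$, which is bounded uniformly in $t\ge1$ by standard parabolic regularity (this is exactly the computation the paper uses in Lemmas \ref{etozero} and \ref{l:E0l0}). So the entire gradient term can be absorbed into the constant $C$ with no layer-by-layer bookkeeping.

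The genuine gap is the one you do not address: the far-field potential term. For $|x|>R_{\theta^\ast}(t)$ you have $u<\theta^\ast$ and hence $V(u)\ge 0$, which is the \emph{wrong} sign for the upper bound on $E$ you need. Your final display silently assumes $\int_{\{|x|>R_{\theta^\ast}(t)\}}V(u(x,t))\,dx\le C$ uniformly in $t$, but nothing in the proposal establishes this, and it is not a priori true: the $L^2$ (or $L^{p+1}$) mass of $u$ beyond the leading edge is not controlled by anything you invoke, and a tail potential energy growing like $t$ would exactly cancel the $-c\,t\,|V(1)|$ gain from the bulk. Controlling precisely this kind of tail integral is the hard content of Lemmas \ref{etoeo1} and \ref{etoeo2} in the convergence-to-bump case, where it requires either the exponential $L^2$ decay estimate (for $f'(0)<0$) or the supersolution construction built from a shifted bump (for $f'(0)=0$), and in both cases it leans on $u(L,t)\to v(L)$ being small --- which is unavailable here, since at the moving edge $u(R_{\theta^\ast}(t),t)=\theta^\ast$ is not small. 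Without an analogue of that estimate your direct approach does not close, which is presumably why the paper sidesteps the issue entirely via the dissipation-integral contradiction.
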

\begin{proof}
  We argue by contradiction. Suppose that
  $\displaystyle \lim_{t\rightarrow\infty}u(x,t)=1$ locally uniformly in
  $\mathbb{R}$ and $E[u(\cdot,t)]$ is bounded below. Then for any
  $L>0$, we can construct a cutoff function
  $\kappa_{L}(x)=\eta(|x|-L)$, where $\eta$ is defined in the proof of
  Lemma \ref{test}. For any $L>0$, $\kappa_{L}(x)=1$ for $|x|<L+1$,
  $\kappa(x)=0$ for $|x|>L+2$, and $|\kappa_{L}'(x)|$ is
  bounded. Since $u(x,t)$ is symmetric decreasing, $\kappa_{L}$,
  $\kappa_{L}'$ are both bounded, and $u$, $u_x$ are both bounded for
  all $t\geq1$ by standard parabolic regularity, for $\tilde u_L(x, t)
  :=\kappa_L(x)u(x,t)$ with any $t\geq1$ we have the following energy
  estimate:
  \begin{align}
    E[\tilde u_L(x, t)]& =2\int^{L+1}_{0}V(u)dx+ \int^{L+1}_{0}u_x^2dx
    \nonumber \\
    & + \int^{L+2}_{L+1} \left\{ \left(
        \frac{\partial(\kappa_{L}u)}{\partial x}
      \right)^2+2V(\kappa_{L}u) \right\} dx \leq
    2\int^{L+1}_{0}V(u)dx+C,
  \end{align}
  where the constant $C$ is independent of $L$.  Since $\displaystyle
  \lim_{t\rightarrow\infty}u(x,t)=1$ locally uniformly in
  $\mathbb{R}$, for every $L_0>0$ satisfying $(L_0+1)V(1)<-C$, we can
  choose $t_0>0$ such that $V(u(x,t_0))<V(1)/2<0$ for any
  $x\in(-L_0-1,L_0+1)$.  This implies that
  $\tilde{\phi}(x;L_0)=\kappa_{L_0}(x)u(x,t_0)$ satisfies
  $E[\tilde{\phi}(x;L_0)]<0$.

  Note that $\tilde{\phi}(x;L_0)$ is a compactly supported function,
  so it lies in $H^1_c(\mathbb{R})$ for any $c > 0$. Now consider the
  solution $\tilde{u}(x,t)$ that satisfies (\ref{main}), with the
  initial condition $\tilde{u}(x,0)=\tilde{\phi}(x;L_0)$. By
  Proposition \ref{p:muratov}, Lemma \ref{wavelike}, and the fact that
  \begin{equation}
    u(x,t+t_0)\geq\tilde{u}(x,t),\;\;x \in \mathbb{R},\;t>0,
  \end{equation}
  there exists $c > 0$ such that for any $t>t_0$,
  \begin{equation}
    R_{\theta^{\ast}}>c(t-t_0)+R_0,
  \end{equation}
  for some constant $R_0 \in \mathbb R$. Moreover, we can find
  $T_0>0$ such that for any $t>T_0$ and $|x|\leq{c t /2}$ we
    have
  \begin{equation}
    u(x,t+t_0)\geq\theta^{\ast}.
  \end{equation}

  On the other hand, by \eqref{dEdt} there exists a sufficiently large
  $t_{\alpha} \geq 0$ such that
  \begin{equation}
    \int^{\infty}_{t_{\alpha}}\int_{\mathbb{R}}u^2_t(x,t)dxdt<{\alpha}^2,
  \end{equation}
  for every $\alpha > 0$. Let us take $\alpha=\theta_{0} \sqrt{c} /9$,
  $t_1 > \max \{t_0,t_{\alpha}\}$ and $x_1=R_{\theta_0/2}(t_1)$. We
  also take $T>T_0$ such that $x_1<c T/4$, and $t_2=t_1+T$, $x_2=x_1+c
  T$. Then by Cauchy-Schwarz inequality we have
  \begin{eqnarray}\int^{t_2}_{t_1}\int^{x_2}_{x_1}|u_t(x,t)|dxdt
    &\leq&\sqrt{(x_2-x_1)(t_2-t_1)} \left( \int^{t_2}_{t_1}
      \int^{x_2}_{x_1}u^2_t(x,t)dxdt \right)^{1/2}\nonumber\\
    &\leq&\sqrt{c} T \left( \int^{\infty}_{t_{\alpha}}
      \int_{\mathbb{R}}u^2_t(x,t)dxdt \right)^{1/2}\nonumber\\
    &\leq&\frac{c T\theta_0}{9}. \label{eq:utcontr}
  \end{eqnarray}

  At the same time, since by construction $0<x_1<c T/4$, we also have
  \begin{eqnarray}
    \int^{t_2}_{t_1}\int^{x_2}_{x_1}|u_t(x,t)|dxdt& \geq
    &\int^{c T/2}_{c T/4} \left( \int^{t_2}_{t_1}|u_t(x,t)|dt \right)
    dx \nonumber\\
    &\geq&\int^{c T/2}_{c T/4}(u(x,t_2)-u(x,t_1))dx.
  \end{eqnarray}
  Since $t_2>T>T_0$, we have $u(x,t_2)\geq\theta^{\ast}>\theta_{0}$
  for $x\in(c T/4,c T/2)$. And by the definition of $x_1$ and $T$, we
  have $u(x,t_1)<\theta_{0}/2$ for $x\in(c T/4,c T/2)$. So we have
  \begin{equation}
    \int^{t_2}_{t_1}\int^{x_2}_{x_1}|u_t(x,t)|dxdt \geq \frac{c T\theta_0}{8},
  \end{equation}
  which contradicts \eqref{eq:utcontr}.
\end{proof}

Note that we have just proved the equivalence in part 1 of Theorem
\ref{theorembistable}. Indeed, we have a stronger corollary.
\begin{cor}\label{spreadcondition}
  We have $\displaystyle \lim_{t\rightarrow\infty}u(x,t)=1$ locally
  uniformly in $\mathbb{R}$, if and only if there exists $T\geq0$ such
  that $\displaystyle \lim_{t\rightarrow\infty}E[u(\cdot,T)]<0$.
\end{cor}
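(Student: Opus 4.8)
The plan is to read the corollary off directly from Lemmas \ref{test} and \ref{unboundedenergy}, which between them already contain all the substantive content; the statement should of course be read with $E[u(\cdot,T)]<0$ denoting a fixed negative number (rather than a limit).

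For the ``if'' direction, I would simply observe that the hypothesis ``there exists $T\geq 0$ with $E[u(\cdot,T)]<0$'' is precisely the hypothesis of Lemma \ref{test}, whose conclusion is exactly $\lim_{t\to\infty}u(x,t)=1$ locally uniformly in $\mathbb R$. For the ``only if'' direction, suppose $\lim_{t\to\infty}u(x,t)=1$ locally uniformly in $\mathbb R$. Then Lemma \ref{unboundedenergy} gives $\lim_{t\to\infty}E[u(\cdot,t)]=-\infty$; in particular $E[u(\cdot,t)]$ is eventually strictly negative, so there exists $T\geq 0$ — indeed every sufficiently large $T$ — with $E[u(\cdot,T)]<0$.

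There is essentially no obstacle, since the work has already been carried out in establishing the two lemmas; the only thing to be careful about is the harmless abuse of notation in the statement. One may additionally remark that, because $E[u(\cdot,t)]$ is non-increasing in $t$ by \eqref{dEdt}, the condition ``$E[u(\cdot,T)]<0$ for some $T\geq 0$'' is equivalent to ``$E[u(\cdot,t)]<0$ for all sufficiently large $t$'', so that together with Corollary \ref{c:Einf} this corollary subsumes part~1 of Theorem \ref{theorembistable}.
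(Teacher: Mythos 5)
Your proof is correct and is exactly the argument the paper intends: the ``if'' direction is Lemma \ref{test} verbatim, and the ``only if'' direction follows from Lemma \ref{unboundedenergy} since a limit of $-\infty$ forces $E[u(\cdot,T)]<0$ for all large $T$. Your reading of the stray ``$\lim$'' in the statement as a typo is also the right one.
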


We now turn our attention to the case when $E[u(\cdot,t)]$ is bounded
from below. By Lemmas \ref{test} and \ref{unboundedenergy},
boundedness of $E[u(\cdot, t)]$ implies $\displaystyle
\lim_{t\rightarrow\infty}E[u(\cdot,t)]\geq0$. Below we prove that in
this case either $\displaystyle \lim_{t\rightarrow\infty}u(x,t)=0$
uniformly in $\mathbb{R}$, or $\displaystyle
\lim_{t\rightarrow\infty}u(x,t)=v(x)$ uniformly in $\mathbb{R}$.  The
idea of our proof is due to Fife \cite[Lemma 10]{Fi1979}.  We refine
Fife's arguments under our weaker assumptions on the nonlinearity and
(SD).

The next Lemma establishes existence of an increasing sequence $\{ t_n
\}$ tending to infinity on which the solution converges to a zero of
$V(u)$ at the origin, thus allowing only two possibilities for the
value of $\displaystyle \lim_{n \to \infty} u(0, t_n)$.

\begin{lemma}
  \label{origin}
  If $E[u(\cdot,t)]$ is bounded from below, there exists an increasing
  sequence $\{t_n\}$ with $\displaystyle
  \lim_{n\rightarrow\infty}t_n=\infty$ such that either $\displaystyle
  \lim_{n\rightarrow\infty}u(0,t_n)=0$, or $\displaystyle
  \lim_{n\rightarrow\infty}u(0,t_n)=\theta^{\ast}$.
\end{lemma}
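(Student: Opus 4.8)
\emph{Proposal.} The plan is to produce a sequence of times $t_n\to\infty$ along which $u(\cdot,t)$ converges, locally in $x$, to a bounded symmetric decreasing solution of \eqref{stationary}, then to classify all such solutions and discard those whose value at the origin is not a zero of $V$.

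Since $E[u(\cdot,t)]$ is non-increasing and bounded from below, $E_\infty:=\lim_{t\to\infty}E[u(\cdot,t)]$ exists and is finite. Integrating \eqref{dEdt} from $t=1$ gives $\int_1^\infty\!\int_{\mathbb R}u_t^2\,dx\,dt=E[u(\cdot,1)]-E_\infty<\infty$, so there is an increasing sequence $t_n\to\infty$ with $\int_{\mathbb R}u_t^2(x,t_n)\,dx\to0$. By standard parabolic regularity $u(\cdot,t_n)$, $u_x(\cdot,t_n)$, $u_{xx}(\cdot,t_n)$ are bounded in $L^\infty(\mathbb R)$ uniformly in $n$, so by Arzel\`a--Ascoli and a diagonal argument, after passing to a subsequence (not relabeled) we have $u(\cdot,t_n)\to w$ in $C^1_{\mathrm{loc}}(\mathbb R)$, with $0\le w\le1$ (the upper bound because $\limsup_{t\to\infty}\|u(\cdot,t)\|_{L^\infty}\le1$) and $w$ symmetric decreasing, by the remark after Proposition \ref{p:exist}. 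Testing $u_{xx}(\cdot,t_n)=u_t(\cdot,t_n)-f(u(\cdot,t_n))$ against $\varphi\in C_c^\infty(\mathbb R)$ and letting $n\to\infty$, the term $\int u_t(\cdot,t_n)\varphi$ vanishes by Cauchy--Schwarz while the other terms pass to the limit by local uniform convergence; hence $\int_{\mathbb R}w\varphi''=-\int_{\mathbb R}f(w)\varphi$, so $w\in C^2(\mathbb R)$ and $w''+f(w)=0$ on $\mathbb R$.

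Next I classify the bounded symmetric decreasing solutions $w$ of $w''+f(w)=0$ with $0\le w\le1$. Multiplying by $w'$ shows $\tfrac12(w')^2-V(w)$ is constant, equal to $-V(w(0))$ since $w'(0)=0$. If $f(w(0))=0$, then $w\equiv w(0)\in\{0,\theta_0,1\}$ by uniqueness for the ODE. If $f(w(0))\ne0$, then $w''(0)=-f(w(0))$, and since $w$ attains its maximum at $0$ we must have $f(w(0))>0$, i.e. $w(0)\in(\theta_0,1)$; monotonicity of $w$ on $(0,\infty)$ and boundedness then force $w(x)\to w_\infty$ as $x\to+\infty$ with $w_\infty<w(0)$, $f(w_\infty)=0$ and $V(w_\infty)=V(w(0))$. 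Inspecting the graph of $V$ on $[0,1]$ (which, by \eqref{bistable}--\eqref{negpotential}, increases on $(0,\theta_0)$ to a positive maximum, then decreases through its zero $\theta^\ast\in(\theta_0,1)$ down to $V(1)<0$), the only possibility is $w_\infty=0$ and $w(0)=\theta^\ast$, in which case $w=v$ by uniqueness of the bump. Hence $w\in\{0,\theta_0,1,v\}$, and it remains to exclude the constants $\theta_0$ and $1$. If $w\equiv\theta_0$, then for any fixed $R>0$ one has $\int_{-R}^R(\tfrac12u_x^2+V(u))(\cdot,t_n)\,dx\to2R\,V(\theta_0)$, while for $n$ large $u(\cdot,t_n)<\theta^\ast$ on $\{|x|\ge R\}$ — because $u$ is symmetric decreasing and $u(R,t_n)\to\theta_0<\theta^\ast$ — so $V(u(\cdot,t_n))\ge0$ there; hence $E_\infty=\lim_nE[u(\cdot,t_n)]\ge2R\,V(\theta_0)$ for every $R$, and $V(\theta_0)>0$ forces $E_\infty=+\infty$, a contradiction. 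If $w\equiv1$, then $u(\cdot,t_n)\to1$ in $C^1_{\mathrm{loc}}$, so for $L$ large the bulk energy $\int_{-L}^L(\tfrac12u_x^2+V(u))(\cdot,t_n)\,dx$ is close to $2L\,V(1)$ and hence very negative for $n$ large, while multiplying $u(\cdot,t_n)$ by the cutoff $\kappa_L$ of Lemma \ref{unboundedenergy} adds only a bounded transition-layer contribution; choosing $L$ and then $n$ large yields a compactly supported datum $\hat\phi$ with $E[\hat\phi]<0$. By Lemma \ref{test} the solution issuing from $\hat\phi$ tends to $1$ locally uniformly, hence so does $u$ by comparison, and then $E[u(\cdot,t)]\to-\infty$ by Lemma \ref{unboundedenergy}, contradicting boundedness from below. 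Therefore $w\in\{0,v\}$, so $\lim_nu(0,t_n)=w(0)\in\{0,\theta^\ast\}$, which proves the lemma.

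The main obstacle is precisely this exclusion step: a subsequential limit equal to a non-zero constant cannot be discarded by a purely local energy estimate, since $E$ is an integral over all of $\mathbb R$ while $C^1_{\mathrm{loc}}$ convergence controls nothing about the tails. For $w\equiv\theta_0$ one is saved by symmetric monotonicity, which renders the tails energetically nonnegative; for $w\equiv1$ one must instead feed the information back into the truncation/comparison machinery and the sharp equivalence of Lemmas \ref{test} and \ref{unboundedenergy}. A secondary point requiring care is that the very existence of times $t_n$ along which $u_t$ is small in $L^2$ — needed to obtain the elliptic equation for $w$ — rests on the gradient-flow identity \eqref{dEdt} together with the lemma's hypothesis that $E$ be bounded below.
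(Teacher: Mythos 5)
Your proof is correct, but it takes a genuinely different route from the paper's. Both arguments begin identically: boundedness of $E$ from below together with \eqref{dEdt} gives $\int_1^\infty\int_{\mathbb R}u_t^2\,dx\,dt<\infty$, hence a sequence $t_n\to\infty$ with $\|u_t(\cdot,t_n)\|_{L^2(\mathbb R)}\to0$. They diverge immediately afterwards. The paper multiplies \eqref{main} by $u_x$ and integrates over the half-line $(-\infty,0)$, which (using $u_x(0,t)=0$ by symmetry and the decay at $-\infty$) yields the first-integral identity $\int_{-\infty}^{0}u_xu_t\,dx=-V(u(0,t))$; the left-hand side is bounded by $\|u_t(\cdot,t)\|_{L^2(\mathbb R)}\|u_x(\cdot,t)\|_{L^\infty(\mathbb R)}^{1/2}\,u(0,t)^{1/2}$ thanks to the monotonicity of $u$ on $(-\infty,0)$, so $V(u(0,t_n))\to0$ directly, and since the only zeros of $V$ on $[0,1]$ are $0$ and $\theta^\ast$ the conclusion follows after passing to a subsequence --- no compactness, no classification of stationary states, and no exclusion of the constants $\theta_0$ and $1$ is ever needed, because $V(\theta_0)>0$ and $V(1)<0$ rule them out for free. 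Your argument instead extracts a $C^1_{\mathrm{loc}}$ limit profile $w$, classifies the bounded symmetric decreasing solutions of \eqref{stationary} as $\{0,\theta_0,1,v\}$, and discards $\theta_0$ and $1$ by two separate energy arguments (nonnegativity of the tail energy for $\theta_0$; the truncation/comparison machinery of Lemmas \ref{wavelike}, \ref{test} and \ref{unboundedenergy} for $1$ --- all of which are proved before this lemma, so there is no circularity). This costs substantially more work, but it also delivers more: you obtain locally uniform convergence of $u(\cdot,t_n)$ to $0$ or to $v$ along the subsequence, a fact the paper only establishes later, in Lemma \ref{tobump}, by a linearization argument. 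One small caveat if your proof were substituted for the paper's: Remark \ref{r:tnl1} asserts that the sequence can be chosen with $t_{n+1}-t_n\leq1$ (this spacing is used in Lemma \ref{tobump}), and your diagonal subsequence does not automatically have that property, so the remark would need to be re-justified, e.g.\ by a pigeonhole choice of $t_n$ in each unit interval followed by your compactness argument.
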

\begin{proof}
  We multiply $u_x$ on both sides of equation (\ref{main}), and
  integrate the products over $(-\infty,0)$. Then we have
  \begin{eqnarray}
    \label{intleft}
    \int_{-\infty}^{0}u_x(x,t)u_t(x,t)dx&=
    &\int_{-\infty}^{0}(u_{xx}(x,t)+f(u(x,t)))u_x(x,t)dx\nonumber\\
    &=& \left. \frac{1}{2}u_x^2(x,t)
    \right|_{x=-\infty}^{0}-(V(u(0,t))-V(u(-\infty,t)))
    \nonumber\\
    &=&-V(u(0,t)).
  \end{eqnarray}
  From monotonicity of $u$ on $(-\infty,0)$ and standard parabolic
  regularity, for $t \geq 1$ the left-hand side of (\ref{intleft}) can
  be controlled by
  \begin{eqnarray}
    \left| \int_{-\infty}^{0}u_x(x,t)u_t(x,t)dx \right| &\leq
    &\|u_t(\cdot,t)\|_{L^2(-\infty,0)}\|u_x(\cdot,t)\|_{L^2(-\infty,0)}
    \nonumber\\
    &\leq&\|u_t(\cdot,t)\|_{L^2(\mathbb{R})}
    \|u_x(\cdot,t)\|^{1/2}_{L^\infty(\mathbb R)} |u(0, t)|^{1/2}  \nonumber\\
    &\leq&\|u_t(\cdot,t)\|_{L^2(\mathbb{R})}
    \|u_x\|^{1/2}_{L^\infty(\mathbb R \times (1,\infty))} \max \{ 1, \|
    \phi \|^{1/2}_{L^\infty(\mathbb R)} \}.
  \end{eqnarray}
  where we applied Cauchy-Schwarz inequality in the first line.  Since
  $E[u(\cdot,t)]$ is bounded from below, by \eqref{dEdt} we have
  \begin{equation}
    \int_{1}^{\infty} \int_\mathbb R u_t^2(x,t)dx dt <\infty.
  \end{equation}
  Therefore, there exists an unbounded increasing sequence $\{t_n\}$
  such that $\displaystyle \lim_{n\rightarrow\infty} \|u_t(\cdot,t_n)
  \|_{L^2(\mathbb{R})}=0$. Since also $\| u_x \|_{L^\infty(\mathbb R
    \times (1, \infty))} < \infty$ by standard parabolic regularity,
  this implies that $\displaystyle \lim_{n\rightarrow\infty}V(u(0,t_n))=0$ by
  \eqref{intleft}. Furthermore, since $\displaystyle \limsup_{n \to
    \infty} \| u(\cdot, t_n) \|_{L^\infty(\mathbb R)} \leq 1$, by the
  assumptions on the nonlinearity either
  $\displaystyle \lim_{t\rightarrow\infty}u(0,t_n)=0$ or
  $\displaystyle \lim_{t\rightarrow\infty}u(0,t_n)=\theta^{\ast}$.
\end{proof}

\begin{rmk}
  \label{r:tnl1}
  The sequence $\{t_n\}$ in Lemma \ref{origin} satisfies $\|
  u_t(\cdot, t_n) \|_{L^2(\mathbb R)} \to 0$ and can be chosen so as
  $t_{n+1}-t_{n}\leq1$ for every $n$.
\end{rmk}

Our next result treats the first alternative in Lemma \ref{origin}.

\begin{lemma}
  \label{tozero}
  Suppose that there exists an increasing sequence $\{t_n\}$ such that
  $\displaystyle \lim_{n\rightarrow\infty}t_n=\infty$, and
  $\displaystyle \lim_{n\rightarrow\infty}u(0,t_n)=0$, then
  $\displaystyle \lim_{t\rightarrow\infty}u(x,t)=0$ uniformly in
  $\mathbb{R}$.
\end{lemma}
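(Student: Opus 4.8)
The plan is to combine the fact that $u(x,t)$ is symmetric decreasing with the convergence of $u(0,t_n)$ to $0$ in order to force the entire profile down, and then to rule out any subsequent ``revival'' of the solution by a comparison argument. First I would use hypothesis (SD) (preserved under the flow, as in the remark after Proposition \ref{p:exist}): since $u(\cdot,t_n)$ is symmetric decreasing, $u(x,t_n)\le u(0,t_n)\to 0$ for every $x\in\mathbb R$, so $\|u(\cdot,t_n)\|_{L^\infty(\mathbb R)}\to 0$. In particular, for $n$ large we have $\|u(\cdot,t_n)\|_{L^\infty(\mathbb R)}<\theta_1$ (the constant from \eqref{eq:fconc}) and in fact $<\theta_0$ (from \eqref{bistable}), which is the regime where the nonlinearity is a pure sink.

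Next I would close the argument by a supersolution comparison. Fix $n$ large enough that $m_n:=\|u(\cdot,t_n)\|_{L^\infty(\mathbb R)}<\theta_0$, and consider the spatially homogeneous supersolution $\bar w(t)$ solving the ODE $\bar w' = f(\bar w)$, $\bar w(t_n)=m_n$. Because $f<0$ on $(0,\theta_0)$, we have $\bar w(t)\downarrow 0$ as $t\to\infty$; and because $\bar w(t_n)=m_n\ge u(x,t_n)$ for all $x$, the comparison principle (Proposition \ref{p:exist} guarantees the solution against which we compare is classical) gives $0\le u(x,t)\le \bar w(t)$ for all $x\in\mathbb R$ and all $t\ge t_n$. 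Letting $t\to\infty$ yields $u(x,t)\to 0$ uniformly in $\mathbb R$, which is exactly the assertion. Note this even shows uniform exponential (or, in the degenerate case $f'(0)=0$, algebraic) decay, though only uniform decay to $0$ is claimed.

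The only delicate point is justifying $\|u(\cdot,t_n)\|_{L^\infty(\mathbb R)}\to 0$ from $u(0,t_n)\to 0$: this is immediate once one knows $u(x,t)$ inherits the symmetric-decreasing property from $\phi$ for all $t>0$, so the supremum over $x$ is attained at $x=0$. (Without (SD) the lemma would genuinely fail — a bump could drift to infinity — which is precisely why (SD) is imposed.) Everything else is a one-line ODE comparison, so there is no real obstacle; the statement is essentially a packaging of monotonicity plus the maximum principle. One could alternatively avoid invoking symmetry by using parabolic regularity to turn the $L^2$-smallness of $u(\cdot,t_n)$ that one can extract into pointwise smallness, but the symmetric-decreasing route is cleaner and is the one consistent with the rest of the section.
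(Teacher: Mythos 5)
Your proposal is correct and follows essentially the same route as the paper: hypothesis (SD) puts the maximum of $u(\cdot,t_n)$ at the origin, so $u(0,t_n)\to 0$ forces $\|u(\cdot,t_n)\|_{L^\infty(\mathbb R)}<\theta_0$ for large $n$, after which the sign of $f$ on $(0,\theta_0)$ and a spatially homogeneous ODE supersolution give uniform decay to zero. The paper leaves the final comparison step implicit (``by the structure of the nonlinearity''), whereas you spell it out, but the argument is the same.
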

\begin{proof}
  Recall that the maximum of solution $u$ is always at the origin. By
  the structure of the nonlinearity $f$, we know that once
  $\max_{x{\in}\mathbb{R}}u(x,T)<\theta_0$ for some $T\geq0$, then
  $\displaystyle \lim_{t\rightarrow\infty}u(x,t)=0$ uniformly in $\mathbb{R}$.
\end{proof}

Combining the results of Lemma \ref{origin} and Lemma \ref{tozero}, we
now prove the following result.

\begin{lemma}
  \label{tobump}
  Suppose that $E[u(\cdot,t)]$ is bounded from below in $t$, then
  either $\displaystyle \lim_{t\rightarrow\infty}u(x,t)=0$, or
  $\displaystyle \lim_{t\rightarrow\infty}u(x,t)=v(x)$, uniformly in
  $\mathbb{R}$.
\end{lemma}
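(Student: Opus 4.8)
The plan is to exploit the special sequence of times provided by Lemma~\ref{origin} together with the uniform-in-$x$ temporal H\"older estimate of Proposition~\ref{holder}. First I would use Lemma~\ref{origin} and Remark~\ref{r:tnl1} to fix an increasing sequence $t_n\to\infty$ with $t_{n+1}-t_n\leq 1$, with $\|u_t(\cdot,t_n)\|_{L^2(\mathbb R)}\to 0$, and with either $u(0,t_n)\to 0$ or $u(0,t_n)\to\theta^{\ast}$. In the first case Lemma~\ref{tozero} immediately yields $u(\cdot,t)\to 0$ uniformly, so the entire content of the lemma lies in the second case, where I claim $u(\cdot,t)\to v$ uniformly.

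Assume $u(0,t_n)\to\theta^{\ast}$. By standard parabolic regularity $u(\cdot,t_n)$ and its first two spatial derivatives are bounded uniformly in $n$, so along a subsequence $u(\cdot,t_n)\to w$ in $C^1_{\mathrm{loc}}(\mathbb R)$ for some $w\in C^{1,1}(\mathbb R)$ which, being a pointwise limit of symmetric decreasing functions, is symmetric decreasing, with $w(0)=\theta^{\ast}$ and $w'(0)=0$. Writing $u_{xx}(\cdot,t_n)=u_t(\cdot,t_n)-f(u(\cdot,t_n))$ and passing to the limit (the first term $\to 0$ in $L^2_{\mathrm{loc}}$ by Remark~\ref{r:tnl1}, the second $\to f(w)$ locally uniformly) identifies $w''=-f(w)$ in the distributional, hence classical, sense, so $w\in C^2(\mathbb R)$ solves \eqref{stationary}. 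The first integral then gives $\tfrac12(w')^2=V(w)$ along the trajectory; monotonicity of $w$ on $(0,\infty)$ forces $w(x)$ to a limit $w_\infty\geq 0$ with $V(w_\infty)=0$, and $w_\infty=\theta^{\ast}$ is impossible since it would make $w\equiv\theta^{\ast}$, contradicting $f(\theta^{\ast})>0$; thus $w_\infty=0$, and $w>0$ everywhere by ODE uniqueness. Hence $w$ is a positive symmetric decreasing solution of \eqref{stationary} vanishing at infinity, so $w=v$ by uniqueness of the bump (Proposition~\ref{p:bump}). Since the limit is independent of the subsequence, $u(\cdot,t_n)\to v$ in $C^1_{\mathrm{loc}}(\mathbb R)$, and the convergence is in fact uniform on $\mathbb R$: given $\varepsilon>0$, choose $R$ with $v(R)<\varepsilon$ (Proposition~\ref{p:bump}); for $n$ large $u(R,t_n)<2\varepsilon$, and since $u(\cdot,t_n)$ is symmetric decreasing, $0\leq u(x,t_n)\leq u(R,t_n)$ for $|x|\geq R$, so $|u(x,t_n)-v(x)|<3\varepsilon$ there, while local uniform convergence handles $|x|\leq R$.

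The main obstacle is the passage from convergence along $\{t_n\}$ to the full limit $t\to\infty$, and this is precisely where Proposition~\ref{holder} and the bound $t_{n+1}-t_n\leq 1$ are used. For large $t$ pick $n$ with $t\in[t_n,t_{n+1}]$, so $|t-t_n|\leq 1$; Proposition~\ref{holder} gives $\|u(\cdot,t)-u(\cdot,t_n)\|_{L^\infty(\mathbb R)}\leq C(\min\{t,t_n\})\,|t-t_n|^{1/4}\leq C(\min\{t,t_n\})$ with $C(T)\to 0$ as $T\to\infty$, and since $|t-t_n|\leq 1$ we have $\min\{t,t_n\}\to\infty$ as $t\to\infty$. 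Therefore $\|u(\cdot,t)-v\|_{L^\infty(\mathbb R)}\leq\|u(\cdot,t)-u(\cdot,t_n)\|_{L^\infty(\mathbb R)}+\|u(\cdot,t_n)-v\|_{L^\infty(\mathbb R)}\to 0$, which gives $u(\cdot,t)\to v$ uniformly and completes the proof. I expect the only genuinely delicate points to be the ODE argument that $w$ decays to $0$ rather than stabilizing at $\theta^{\ast}$ (handled via the first integral, using $V(\theta^{\ast})=0$ and $f(\theta^{\ast})>0$) and the verification that Proposition~\ref{holder} applies here, i.e. that $E[u(\cdot,t)]$ is bounded from below — which is exactly the standing hypothesis of the lemma. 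As a by-product the oscillatory scenario, in which $u(0,t)$ accumulates at both $0$ and $\theta^{\ast}$ along different time sequences, is excluded a posteriori.
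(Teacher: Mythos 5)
Your proof is correct, and its overall skeleton coincides with the paper's: reduce to the case $u(0,t_n)\to\theta^{\ast}$ via Lemma \ref{origin} and Lemma \ref{tozero}, prove locally uniform convergence of $u(\cdot,t_n)$ to $v$, upgrade to uniform convergence on $\mathbb R$ using (SD) and the decay of $v$, and pass from the sequence to the full limit with Proposition \ref{holder} and the spacing $t_{n+1}-t_n\leq1$ from Remark \ref{r:tnl1}. The one step you handle by a genuinely different method is the identification of the limit along $\{t_n\}$. The paper argues quantitatively: it sets $w_n=u(\cdot,t_n)-v$ and treats $w_n''=g_n-K_nw_n$, $w_n(0)=\alpha_n$, $w_n'(0)=0$ as an initial value problem in $x$, with $\alpha_n=u(0,t_n)-\theta^{\ast}\to0$ and $g_n=u_t(\cdot,t_n)\to0$ in $L^2(\mathbb R)$, and a Gronwall-type iteration over subintervals of $[-L,L]$ gives $\max_{|x|\leq L}|w_n|\leq 2^{l}\alpha_n+(2^{l}-1)G_n\to0$. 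You instead use soft compactness (uniform $W^{2,\infty}$ bounds from parabolic regularity plus Arzel\`a--Ascoli) and rigidity: every subsequential limit is a symmetric decreasing classical solution of \eqref{stationary} equal to $\theta^{\ast}$ at the origin, and your first-integral analysis (showing $V(w_\infty)=0$, excluding $w_\infty=\theta^{\ast}$ because $f(\theta^{\ast})>0$, and obtaining positivity by backward ODE uniqueness) combined with the uniqueness of the bump asserted before Proposition \ref{p:bump} forces the limit to be $v$; subsequence-independence then yields convergence of the full sequence. Your route is more conceptual and avoids the explicit iteration, at the price of invoking uniqueness of the positive symmetric decreasing solution as an external input and of giving no rate; the paper's route is self-contained modulo $v(0)=\theta^{\ast}$ and produces an explicit (if exponentially growing in $L$) bound. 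Both arguments are complete, and the final two stages (the tail estimate $\sup_{|x|\geq L}|u(x,t)-v(x)|\leq v(L)+\max_{|x|\leq L}|u(x,t)-v(x)|$ and the H\"older interpolation in $t$) are essentially identical to the paper's.
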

\begin{proof}
  From Lemma \ref{origin} and Lemma \ref{tozero}, we only need to
  prove that if the increasing sequence $\{t_n\}$ in Lemma
  \ref{origin} satisfies $\displaystyle
  \lim_{n\rightarrow\infty}u(0,t_n)=\theta^{\ast}$, then
  $\displaystyle \lim_{t\rightarrow\infty}u(x,t)=v(x)$ uniformly in
  $\mathbb{R}$. To prove this, we first prove the locally uniform
  convergence on the sequence $\{t_n\}$. Let $w(x,t) := u(x,t)-v(x)$,
  then in view of $v(0) = \theta^\ast$ by Proposition \ref{p:bump} we
  have
  \begin{equation}
    \label{eq:ivp}
    w_t=w_{xx}+f'(\tilde{u})w,\;\;w_x(0,t)=0,
    \;\;w(0,t)=u(0,t)-\theta^{\ast},
  \end{equation}
  where $\tilde{u}$ is between $u$ and $v$.  We claim that
  \begin{equation}
    \lim_{n\rightarrow\infty}w(x,t_n)=0,
  \end{equation}
  locally uniformly in $\mathbb{R}$. The proof follows from the
  continuous dependence on the data for solutions of the initial value
  problem in $x$ obtained from \eqref{eq:ivp} for each $t = t_n$
  fixed. Indeed, at $t=t_n\geq1$ we denote $w_n(x) := w(x, t_n)$,
  $g_n(x) :=u_t(x,t_n)$, $K_n(x) :=f'(\tilde{u}(x,t_n))$, $\alpha_n :=
  u(0,t_n)-\theta^{\ast}$, and consider \eqref{eq:ivp} as an ordinary
  differential equation in $x > 0$:
  \begin{equation}
    w_n''=g_n - K_n w_n,\;\;\; w_n'(0)=0,\;\;w_n(0)=\alpha_n.
  \end{equation}
  For any $L>0$, by integration over $(0,L)$ and an application of
  Cauchy-Schwarz inequality we have
  \begin{eqnarray}
    \max_{0 \leq x{\leq}L}|w_n'(x)|
    &\leq&\sqrt{L}\|g_n\|_{L^2(\mathbb{R})}+
    L\|K_n\|_{L^{\infty}(\mathbb{R})}\max_{0{\leq}x \leq L}|w_n(x)|\nonumber\\
    &\leq&\sqrt{L}\|g_n\|_{L^2(\mathbb{R})}
    +L\mathcal{K}\max_{0{\leq}x{\leq}L}|w_n(x)|, \label{eq:maxwnp}
\end{eqnarray}
where the constant $\mathcal{K}$ satisfies
\begin{align}
  |f'(s)| \leq \mathcal{K},\;\;0\leq{s}\leq
  \max\{1,\|\phi(x)\|_{L^{\infty}(\mathbb{R})}\}.
\end{align}

For fixed $L>0$, we choose a sufficiently large integer $l$ such that
$2{\delta}L\mathcal{K}\leq1$ for $\delta :=L/l$. We next take
\begin{align}
  W_{n,k} & :=\max_{(k-1)\delta\leq{x}\leq{k\delta}} |w_n(x)|, \quad k
  \in \mathbb N, \\
  m_{n,0} & :=\alpha_n,\qquad m_{n,k} :=\max_{1 \leq k' \leq k} W_{n,k'}.
\end{align}
Then $m_{n,k}$ is non-decreasing in $k$, and $m_{n,k}= \displaystyle
\max_{0\leq{x}\leq{k\delta}}|w_n(x)|$.  By \eqref{eq:maxwnp} and our
choice of $\delta$, for any $1 \leq k \leq l$ we have
\begin{eqnarray}
  m_{n,k}-m_{n,k-1}&\leq&\delta\max_{0\leq{x}\leq L}| w'_n(x)|
  \nonumber\\
  &\leq& \delta(\sqrt{L}\|g_n\|_{L^2(\mathbb{R})}+
  L\mathcal{K}m_{n,k})\nonumber\\
  &\leq& \delta\sqrt{L}\|g_n\|_{L^2(\mathbb{R})}+\frac{m_{n,k}}{2}.
\end{eqnarray}
This implies that for any $1 \leq k \leq l$ we have
\begin{equation}
  m_{n,k}\leq2m_{n,k-1}+G_n,
\end{equation}
where $G_n := 2 \delta\sqrt{L}\|g_n\|_{L^2(\mathbb{R})}$. Since by
definition $m_{n,0}=\alpha_n$, by iteration and symmetry of $w_n(x)$
we have
\begin{equation}
  \max_{-L{\leq}x{\leq}L}|w_n(x)|=m_{n,l} \leq 2^{l}\alpha_n+(2^{l}-1)G_n.
\end{equation}

Now, as $n\rightarrow\infty$, by Lemma \ref{origin} and Remark
\ref{r:tnl1} we know that $u(0,t_n)-\theta^{\ast}\rightarrow0$ and
$\|u_t(x,t_n)\|_{L^2(\mathbb{R})}\rightarrow0$, so that
$\alpha_n\rightarrow0$, $G_n\rightarrow0$, and $\displaystyle
\max_{-L\leq{x}\leq{L}}|w_n(x)|\rightarrow0$, i.e. $u(x,t_n)$
converges to $v(x)$ locally uniformly. Then by Proposition
\ref{holder} and the fact that by Remark \ref{r:tnl1} the sequence
$\{t_n\}$ can be chosen so as $t_{n+1}-t_n\leq1$, we can obtain the
full limit convergence. Indeed, since the H\"{o}lder constant in $t$
of $u(x, t)$ converges to $0$ as $n \rightarrow \infty$ uniformly for
all $|x|\leq{L}$ and all $t_n<t<t_{n+1}$, we have
\begin{equation}
  |u(x,t)-v(x)|\leq|u(x,t_n)-v(x)|+|u(x,t)-u(x,t_n)|\rightarrow0
  \;\text{as}\;n\rightarrow\infty.
\end{equation}

Finally, let us prove that convergence of $u(x, t)$ to $v(x)$ is, in
fact, uniform. Indeed, since $u(x,t)$ is symmetric decreasing in $x$
and $v(x)\rightarrow0$ as $|x|\rightarrow\infty$, for any $L>0$, $t>0$
we have
\begin{eqnarray}
  \sup_{|x|\geq{L}}|w(x,t)|&=
  &\sup_{|x|\geq{L}}|u(x,t)-v(x)|\nonumber\\
&\leq&\max_{|x|\geq{L}}\{u(x,t),v(x)\}\nonumber\\
&\leq&\max\{u(L,t),v(L)\}\nonumber\\
&\leq&v(L)+\max_{|x|\leq{L}}|w(x,t)|.
\end{eqnarray}
This implies that
\begin{equation}
  \sup_{x\in{\mathbb{R}}}|w(x,t)|\leq{v(L)}+\max_{|x|\leq{L}}|w(x,t)|.
\end{equation}
Then, for any $\varepsilon>0$ we can find $L>0$ sufficiently large
such that $v(L)<\varepsilon/2$. We can also find $T>0$ such that
$|w(x,t)|<\varepsilon/2$ for any $x \in [-L,L]$, $t>T$. So we get
\begin{equation}
  \lim_{t\rightarrow\infty}|w(x,t)|=0,
\end{equation}
uniformly in $x \in \mathbb{R}$, which proves the lemma.
\end{proof}

Note that in view of the results in the preceding lemmas, by proving
Lemma \ref{tobump} we have just proved Theorem \ref{thmbistable}.

\begin{rmk}
  \label{w1}
  By standard parabolic regularity, under the assumptions of Lemma
  \ref{tobump} we also have
  \begin{equation}
    \lim_{t\rightarrow\infty}u(x,t)=v(x)\;in\;C^1(\mathbb{R}).
  \end{equation}
\end{rmk}

We now turn to the study of the limit value of energy. At first, we
prove that the energy of the solution goes to zero, if extinction
occurs.

\begin{lemma}
  \label{etozero}
  If $\displaystyle \lim_{t\rightarrow\infty}u(x,t)=0$ uniformly in $\mathbb{R}$,
  then $\displaystyle \lim_{t\rightarrow\infty}E[u(\cdot,t)]=0$.
\end{lemma}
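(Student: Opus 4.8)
The plan is to show that uniform convergence $u(\cdot, t) \to 0$ forces both the gradient term and the potential term in $E[u(\cdot, t)]$ to vanish in the limit. The subtlety is that $E$ is an integral over all of $\mathbb R$, so pointwise or even uniform smallness of $u$ does not immediately control the integral; the tail of the integral must be handled using the decay of $u$ in $L^2$. First I would recall that by Proposition \ref{p:exist} we have $u \in C((0,\infty); H^2(\mathbb R))$ and, since $u(x,t) \to 0$ uniformly, standard parabolic regularity gives $\| u(\cdot, t) \|_{L^\infty(\mathbb R)} \to 0$ and $\| u_x(\cdot, t)\|_{L^\infty(\mathbb R)} \to 0$ as $t \to \infty$ (the uniform bounds on $u_{xx}$, together with $u \to 0$ uniformly, yield $u_x \to 0$ uniformly by interpolation). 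In particular, for $t$ large we may assume $\| u(\cdot, t)\|_{L^\infty(\mathbb R)} \leq \theta_1$, so that on the support of $u(\cdot, t)$ the nonlinearity is non-increasing, $f'(u) \leq 0$ by \eqref{eq:fconc}, hence $f(u) \leq 0$ and therefore $V(u) = -\int_0^u f(s)\,ds \geq 0$ there; moreover $|V(u)| \le C u^2$ for $0 \le u \le \theta_1$ by \eqref{power} (or simply by $f \in C^1$ with $f(0)=0$), for some constant $C$.

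The next step is to control the $L^2$ norm of the solution uniformly in the large-time regime. Since $u = 0$ is a supersolution is false, but observe: once $\| u(\cdot, t_0) \|_{L^\infty(\mathbb R)} \le \theta_1$ for some $t_0$, the function $w(x,t) := u(x, t + t_0)$ satisfies $w_t = w_{xx} + f(w) \le w_{xx}$ pointwise (as $f(w) \le 0$), so $w$ is a subsolution of the heat equation, and by comparison with the heat semigroup $\| u(\cdot, t) \|_{L^2(\mathbb R)} \le \| u(\cdot, t_0) \|_{L^2(\mathbb R)}$ for all $t \ge t_0$; in fact, since $f(u) \le -c_1 u^p$ near $0$ is not needed, the simple monotonicity $\frac{d}{dt}\|u\|_{L^2}^2 = -2\|u_x\|_{L^2}^2 + 2\int f(u) u\,dx \le 0$ already gives $\|u(\cdot, t)\|_{L^2(\mathbb R)}$ non-increasing for $t \ge t_0$, hence bounded by some constant $A$. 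Combining this with the uniform pointwise decay, $\int_{\mathbb R} V(u(x,t))\,dx \le C \int_{\mathbb R} u^2(x,t)\,dx \le C A^2$ is bounded and non-negative, and in fact $\int_{\mathbb R} V(u)\,dx \le C \| u(\cdot,t)\|_{L^\infty(\mathbb R)} \int_{\mathbb R} u(x,t)\,dx$... but $L^1$ control is not immediate; cleaner is $\int V(u)\,dx \le C\|u(\cdot,t)\|_{L^\infty} \cdot \|u(\cdot,t)\|_{L^2}^2 / \|u(\cdot,t)\|_{L^\infty}$, i.e. simply $0 \le \int_{\mathbb R} V(u(x,t))\,dx \le C \int_{\mathbb R} u^2 \,dx$, which need not tend to zero unless we know $\|u\|_{L^2} \to 0$.

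To close the argument I would therefore use the energy dissipation identity \eqref{dEdt}: since $E[u(\cdot,t)]$ is non-increasing and, by the estimates above, bounded below (by $0$, eventually), it has a limit $E_\infty \ge 0$; it remains to show $E_\infty = 0$. For this, pick the sequence $\{t_n\}$ as in Lemma \ref{origin} and Remark \ref{r:tnl1} along which $\|u_t(\cdot, t_n)\|_{L^2(\mathbb R)} \to 0$. Testing \eqref{main} against $u$ and integrating gives $\int_{\mathbb R} u_t u\,dx = -\int_{\mathbb R} u_x^2\,dx + \int_{\mathbb R} f(u) u\,dx$, so $\int_{\mathbb R} u_x^2(x, t_n)\,dx - \int_{\mathbb R} f(u(x,t_n)) u(x, t_n)\,dx = -\int_{\mathbb R} u_t u\,dx \le \|u_t(\cdot, t_n)\|_{L^2}\|u(\cdot, t_n)\|_{L^2} \to 0$, because $\|u(\cdot, t_n)\|_{L^2} \le A$. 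For $n$ large $f(u) \le 0$ on the support, so $-\int f(u) u\,dx \ge 0$, and hence both $\int u_x^2(x, t_n)\,dx \to 0$ and $\int (-f(u) u)(x, t_n)\,dx \to 0$. Since $0 \le V(u) \le -f(u)\,u \cdot (\text{something bounded})$ — more precisely, using $f(u) \sim -ku^p$, $V(u) = \frac{k}{p+1}u^{p+1}(1+o(1))$ and $-f(u)u = k u^{p+1}(1+o(1))$, so $V(u) \le C(-f(u)u)$ for $u$ small — we get $\int_{\mathbb R} V(u(x, t_n))\,dx \to 0$ as well. Therefore $E[u(\cdot, t_n)] \to 0$, and since $E[u(\cdot, t)]$ is monotone with limit $E_\infty$, we conclude $E_\infty = 0$, i.e. $\lim_{t \to \infty} E[u(\cdot, t)] = 0$. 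The main obstacle is the a priori control of $\|u(\cdot,t)\|_{L^2(\mathbb R)}$ by a constant uniformly in large $t$, which I resolve via the monotonicity of the $L^2$ norm once the sup-norm has entered the regime $u \le \theta_1$ where $f(u)u \le 0$; everything else is routine use of \eqref{dEdt}, the sequence from Lemma \ref{origin}, and the near-origin asymptotics of $f$ and $V$ from \eqref{power}.
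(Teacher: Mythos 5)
Your proof is correct, but it takes a genuinely different route from the paper's. The paper splits $E$ into its two terms and estimates each directly: the gradient term via hypothesis (SD), writing $\int_0^\infty u_x^2\,dx \le \|u_x\|_{L^\infty}\,u(0,t) \to 0$; and the potential term via $0 \le V(u) \le C u^{p+1}$ together with an explicit decay of $\|u(\cdot,t)\|_{L^{p+1}}$ — exponential $L^2$ decay by a Gronwall argument when $f'(0)<0$, and the heat-semigroup $L^2$-to-$L^{p+1}$ smoothing estimate \eqref{eq:uto0} when $f'(0)=0$. You instead first observe that $E$ is eventually nonnegative (so $E_\infty \ge 0$ exists), extract from \eqref{dEdt} a sequence $t_n\to\infty$ with $\|u_t(\cdot,t_n)\|_{L^2}\to 0$, test the equation with $u$ to get $\int u_x^2\,dx - \int f(u)u\,dx \le \|u_t\|_{L^2}\|u\|_{L^2} \to 0$ along $t_n$ (both summands being nonnegative once $\|u\|_{L^\infty}$ is small), and close via the pointwise bound $V(u)\le C\,(-f(u)u)$ from \eqref{power}, recovering the full limit from monotonicity of $E$. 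The key auxiliary fact you need — boundedness of $\|u(\cdot,t)\|_{L^2}$ — is correctly obtained from monotonicity of the $L^2$ norm once $f(u)u\le 0$ on the range of $u$. Your argument buys a unified treatment of the degenerate and non-degenerate cases, avoids the heat-kernel smoothing estimate, and in fact does not use (SD) at all in its final form; the paper's argument, in exchange, yields quantitative decay rates for $\|u(\cdot,t)\|_{L^{p+1}}$ and convergence of each term of the energy along the full limit $t\to\infty$ rather than only along a sequence. The first half of your writeup (the interpolation claim for $\|u_x\|_{L^\infty}$ and the abandoned $\int u^2$ route) is dispensable, but the final argument stands on its own.
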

\begin{proof}
  From condition (SD), we have
  \begin{eqnarray}
    \int_{\mathbb{R}}\frac{1}{2}u^2_x(x,t)dx&=
    &\int_0^{\infty}u^2_x(x,t)dx\nonumber\\
    &\leq&\|u_x(x,t)\|_{L^{\infty}(\mathbb{R})} \, u(0,t).
  \end{eqnarray}
  By standard parabolic regularity, if $\displaystyle
  \lim_{t\rightarrow\infty}u(x,t)=0$ uniformly in $\mathbb{R}$, then
  \begin{equation}
    \lim_{t\rightarrow\infty}\int_{\mathbb{R}}
    \frac{1}{2}u^2_x(x,t)dx\rightarrow0.
  \end{equation}
  So we only need to show that
  $\displaystyle \lim_{t\rightarrow\infty}\int_{\mathbb{R}}V(u(x,t))dx=0$.

  If $f'(0) < 0$, there exists $C>0$ such that
  \begin{equation}
    0\leq{V(u)}\leq{Cu^2},
  \end{equation}
  for small enough $u$. Then from the usual energy estimate we obtain
  $\displaystyle
  \lim_{t\rightarrow\infty}\|u(\cdot,t)\|^2_{L^2(\mathbb{R})}=0$
  exponentially, so that $\displaystyle
  \lim_{t\rightarrow\infty}\int_{\mathbb{R}}V(u(x,t))dx=0$ as well.
  Alternatively, if $f'(0)=0$, then by \eqref{power} we have
  \begin{equation}
    0\leq{V(u)}\leq{Cu^{p+1}},
  \end{equation}
  for some $C > 0$ and sufficiently small $u$. So it is enough to show
  that $\displaystyle
  \lim_{t\rightarrow\infty}\|u(\cdot,t)\|^{p+1}_{L^{p+1}(\mathbb{R})}=0$. In
  view of \eqref{power} we can use the solution $\bar u(x, t)$ of the
  heat equation:
  \begin{align}
    \bar u_t = \bar u_{xx}, \; x \in \mathbb R, ~t > T, \qquad \bar
    u(x, T) = u(x, T), \; x \in \mathbb R,
  \end{align}
  as a supersolution to obtain (see, e.g., \cite[Proposition
  48.4]{QS2007})
  \begin{align}
    \label{eq:uto0}
    \|u(\cdot,t)\|_{L^{p+1}(\mathbb{R})} \leq \|\bar
    u(\cdot,t)\|_{L^{p+1}(\mathbb{R})} \leq (4 \pi t)^{-{ p - 1
        \over 4(p + 1)}} \| u(\cdot, T) \|_{L^2(\mathbb R)} \to 0
    ~\text{as}~ t \to \infty,
  \end{align}
and the statement follows.
\end{proof}

If, on the other hand, $\displaystyle \lim_{t\rightarrow\infty}u(x,t)=v(x)$
uniformly in $\mathbb{R}$, then we claim that $E[u(\cdot,t)]$ has a
limit as $t\rightarrow\infty$, and the value of the limit is equal to
$E_0$ defined in Proposition \ref{p:bump}. We begin with the
analysis of the non-degenerate case.

\begin{lemma}
  \label{etoeo1}
  Suppose that $f'(0)<0$, then $\displaystyle
  \lim_{t\rightarrow\infty}u(x,t)=v(x)$ uniformly in $\mathbb{R}$
  implies $\displaystyle \lim_{t\rightarrow\infty}E[u(\cdot,t)]=E_0$.
\end{lemma}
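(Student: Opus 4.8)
The plan is to establish the stronger statement that $u(\cdot,t)\to v$ in $H^1(\mathbb R)$ as $t\to\infty$, from which $\lim_{t\to\infty}E[u(\cdot,t)]=E[v]=E_0$ follows, since $E$ is continuous along $L^\infty$-bounded $H^1$-convergent families (the kinetic part trivially, the potential part by the tail estimate carried out below). As preliminary reductions: by \eqref{dEdt} the map $t\mapsto E[u(\cdot,t)]$ is non-increasing, and it is bounded below for large $t$ — once $u(x,t)\le v(1)<\theta^\ast$ for $|x|\ge1$, which holds eventually since $u(\cdot,t)\to v$ uniformly, we have $V(u(\cdot,t))\ge0$ there and hence $E[u(\cdot,t)]\ge-2\sup_{[0,M]}|V|$ with $M:=\max\{1,\|\phi\|_{L^\infty(\mathbb R)}\}$ — so $E_\infty:=\lim_{t\to\infty}E[u(\cdot,t)]$ exists. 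Again from \eqref{dEdt}, $\int_1^\infty\!\int_{\mathbb R}u_t^2<\infty$, so I fix a sequence $t_n\to\infty$ with $\|u_t(\cdot,t_n)\|_{L^2(\mathbb R)}\to0$. By monotonicity of $E$ it then suffices to prove $E[u(\cdot,t_n)]\to E_0$ along this sequence.

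The core of the argument, and the step I expect to be the main obstacle, is to show that $\|u(\cdot,t)-v\|_{L^2(\mathbb R)}\to0$, i.e.\ that the $L^2$-tails of $u(\cdot,t)$ become uniformly small for large $t$; this is exactly where $f'(0)<0$ is indispensable (it is the loss of this exponential localization that will make the $f'(0)=0$ case of the next lemma more delicate). Writing $w(\cdot,t):=u(\cdot,t)-v$, which lies in $H^2(\mathbb R)$ for $t>0$ by Proposition \ref{p:exist} and the decay of $v$ in Proposition \ref{p:bump}(2), one has $w_t=w_{xx}+g(x,t)w$ with $g(x,t):=\int_0^1 f'\!\big(v(x)+sw(x,t)\big)\,ds$ and $|g|\le\mathcal K:=\sup_{[0,M]}|f'|$. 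Since $f'$ is continuous with $f'(0)<0$, choose $\eta>0$ and $\varepsilon_*>0$ with $f'\le f'(0)+\eta=:-2c_1<0$ on $[0,\varepsilon_*]$; because $u$ and $v$ are symmetric decreasing with $v\to0$ and $u(\cdot,t)\to v$ uniformly, I can pick $R>0$ and $T_R$ so that $u(x,t),v(x)\le\varepsilon_*$, hence $g(x,t)\le-2c_1$, for all $|x|\ge R$ and $t\ge T_R$. Testing the equation for $w$ against $w$ and integrating by parts then gives
\[
  \tfrac12\tfrac{d}{dt}\|w(\cdot,t)\|_{L^2}^2\le -2c_1\|w(\cdot,t)\|_{L^2}^2+(\mathcal K+2c_1)\,\delta(t),\qquad \delta(t):=\!\!\int_{-R}^{R}\!\!w^2(x,t)\,dx\le 2R\,\|u(\cdot,t)-v\|_{L^\infty(\mathbb R)}^2\to0,
\]
and a Gronwall comparison on $[T,t]$ (legitimate since $w(\cdot,T)\in L^2$) yields $\|u(\cdot,t)-v\|_{L^2(\mathbb R)}\to0$.

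Granting that, the upgrade to $H^1$ along $\{t_n\}$ is routine. From $u_t=u_{xx}+f(u)$ and $v''=-f(v)$ I get $u_{xx}(\cdot,t_n)-v''=u_t(\cdot,t_n)-\big(f(u(\cdot,t_n))-f(v)\big)$, so $\|u_{xx}(\cdot,t_n)-v''\|_{L^2}\le\|u_t(\cdot,t_n)\|_{L^2}+\mathcal K\|u(\cdot,t_n)-v\|_{L^2}\to0$; integrating by parts once more, $\|u_x(\cdot,t_n)-v'\|_{L^2}^2=-\int_{\mathbb R}w(\cdot,t_n)\big(u_{xx}(\cdot,t_n)-v''\big)\le\|w(\cdot,t_n)\|_{L^2}\,\|u_{xx}(\cdot,t_n)-v''\|_{L^2}\to0$, so $u(\cdot,t_n)\to v$ in $H^1(\mathbb R)$. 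Then $\tfrac12\|u_x(\cdot,t_n)\|_{L^2}^2\to\tfrac12\|v'\|_{L^2}^2$, and $\int V(u(\cdot,t_n))\to\int V(v)$ by splitting $\mathbb R=\{|x|\le L_0\}\cup\{|x|>L_0\}$: on the bounded part one uses $\|u(\cdot,t_n)-v\|_{L^\infty}\to0$, and on the complement (choosing $L_0$ so that $u(\cdot,t_n),v\le\theta^\ast$ there for large $n$) one uses $|V(a)-V(b)|\le\tfrac{\mathcal K}{2}(a+b)|a-b|$ for $a,b\in[0,M]$ together with Cauchy–Schwarz and the $L^2$ bound just obtained. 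Hence $E[u(\cdot,t_n)]\to E[v]=E_0$, and by monotonicity of $E[u(\cdot,t)]$ this forces $E_\infty=E_0$, as claimed.
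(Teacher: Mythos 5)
Your argument is correct, but it is organized quite differently from the paper's. The paper never proves convergence of $u(\cdot,t)-v$ in a global integral norm: it splits the energy into $E[u(\cdot,t);L]$ plus a tail, handles the compact part by the uniform $C^1$ convergence $u(\cdot,t)\to v$ (Remark \ref{w1}), bounds the gradient tail by $u(L,t)\|u_x\|_{L^\infty}$ using (SD), and controls the potential tail by a differential inequality for $\int_L^\infty u^2\,dx$ on the half-line, where $f(u)\le -Ku$ for small $u$ yields the eventual bound $\int_L^\infty V(u(\cdot,t))\,dx \le 2Cv(L)|v'(L)|/K \to 0$ as $L\to\infty$. You instead prove the stronger intermediate statement $\|u(\cdot,t)-v\|_{L^2(\mathbb R)}\to 0$ by a Gronwall estimate for $w=u-v$, exploiting the same spectral gap $f'\le -2c_1<0$ near $u=0$ but applying it to $w$ globally (with the compact region entering as a vanishing forcing term $\delta(t)$), and then upgrade to $H^1$ convergence along a subsequence $t_n$ with $\|u_t(\cdot,t_n)\|_{L^2}\to 0$ extracted from \eqref{dEdt}, closing with monotonicity of the energy. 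Both routes hinge on $f'(0)<0$ in the same essential way; yours buys the stronger conclusion $u(\cdot,t_n)\to v$ in $H^1(\mathbb R)$ (and $L^2$ convergence for the full limit) at the cost of an extra appeal to the dissipation identity, while the paper's local/tail template is the one that transfers more directly to the degenerate case of Lemma \ref{etoeo2}, where — as you correctly anticipate — the absence of a spectral gap would break your global Gronwall step. The individual estimates you use (the pointwise bound $|V(a)-V(b)|\le\tfrac{\mathcal K}{2}(a+b)|a-b|$, the integration by parts $\|w_x\|_{L^2}^2\le\|w\|_{L^2}\|w_{xx}\|_{L^2}$, and the justification of the $L^2$ energy identity via Proposition \ref{p:exist}) are all sound, and the splitting over $\{|x|\le L_0\}$ in your last step is actually not needed once you have the quadratic bound on $V$.
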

\begin{proof}
  At first, we show that for any fixed $L>0$, the energy
  $E[u(\cdot,t);L]$ of $u(x,t)$ restricted to $[-L,L]$, namely
  $E[u(\cdot,t);L] := \int_{-L}^L \left( \frac12 u_x^2 + V(u) \right)
  dx$, converges to the energy $E[v;L]$ of $v(x)$ restricted to
  $[-L,L]$. Then we show that $E[u(\cdot,t)]-E[u(\cdot,t);L]$
  converges to $E_0-E[v;L]$ for sufficiently large $L$.\par

  Since upon integration of \eqref{stationary} we have $|v'| = \sqrt{2
    V(u)}$, on the interval $[-L,L]$ we can compute
  \begin{equation}
    E[v;L] = \int_{-L}^{L} \left( \frac{1}{2}|v'|^2
      +V(v) \right)
    dx=2\sqrt{2}\int_{v(L)}^{\theta^{\ast}}\sqrt{V(u)}du.
  \end{equation}
  We also know that $u(x,t) \rightarrow v(x)$, $u_x(x,t) \rightarrow
  v'(x)$ uniformly in $x\in[-L,L]$, as $t\rightarrow\infty$, by Lemma
  \ref{w1}. This implies that
  \begin{equation}
    \lim_{t\rightarrow\infty}E[u(\cdot,t);L]=E[v;L].
  \end{equation}
  By symmetry of the solution, the remaining part of energy can be
  estimated as follows:
  \begin{equation}
    E[u(\cdot,t)]-E[u(\cdot,t);L]=\int_{L}^{\infty}(u_x^2(x,t)+2V(u(x,t)))dx.
  \end{equation}
  And by decrease of the solution for $x > 0$ we know that
  \begin{equation}
    \int_{L}^{\infty}u_x^2(x,t)dx\leq{u(L,t)\|u_x(x,t)\|_{L^{\infty}(\mathbb{R})}}.
  \end{equation}
  By standard parabolic regularity, for $t \geq 1$, the above
  expression converges to $0$ as $L\rightarrow\infty$. In addition, we
  have $E[v]-E[v;L]\rightarrow0$ as $L\rightarrow\infty$. So we only
  need to show that for any $\delta>0$ there exist a sufficiently
  large $L_{\delta}>0$, $T_{\delta}>0$ such that for any
  $t>T_{\delta}$,
  \begin{equation}
    \left| \int_{L_{\delta}}^{\infty}V(u(x,t))dx \right| <\delta.
  \end{equation}

  If $f'(0)<0$, then there exists $K>0$ such that $f(u)\leq-Ku$ for
  all $u\in[0,\theta_0/2]$. We can then finish the proof of the lemma
  by an $L^2$ decay estimate similar to the one in the proof of Lemma
  \ref{etozero}. Taking $L>0$ satisfying $v(L)<\theta_0/4$, there
  exists $T>0$ such that $u(x,t)<\theta_0/2$ for any $x\in(L,\infty)$
  and any $t>T$. Then for $t>T$ we have
  \begin{eqnarray}
    \frac{d}{dt}\int_{L}^{\infty}u^2dx
    &=&2\int_{L}^{\infty}u(x,t)(u_{xx}(x,t)+f(u))dx\nonumber\\
    &\leq&2u(L,t)|u_x(L,t)|-2K\int_{L}^{\infty}u^2(x,t)dx.
  \end{eqnarray}
  Since $\displaystyle \lim_{t\rightarrow\infty}u(L,t)|u_x(L,t)|=v(L)|v'(L)|$, from
  the above inequality and the relation $0\leq{V(u(x,t))}\leq{Cu^2}$
  on $u\in[0,\theta_0]$ for some $C>0$, we know that there exists
  $\hat{T}>T$ such that for any $t>\hat{T}$
\begin{equation}
  0\leq\int_{L}^{\infty}V(u(x,t))dx<\frac{2Cv(L)|v'(L)|}{K}.
\end{equation}
Since $v(L)v'(L)\rightarrow0$ as $L\rightarrow\infty$, we have the
desired conclusion.
\end{proof}

Now to the degenerate case.

\begin{lemma}
  \label{etoeo2}
  If $f'(0)=0$, then $\displaystyle
  \lim_{t\rightarrow\infty}u(x,t)=v(x)$ uniformly in $\mathbb{R}$
  implies $\displaystyle \lim_{t\rightarrow\infty}E[u(\cdot,t)]=E_0$,
  when (\ref{eq:fconc}) and (\ref{power}) hold.
\end{lemma}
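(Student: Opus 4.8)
The plan is to follow the same skeleton as the proof of Lemma~\ref{etoeo1}, writing $E[u(\cdot,t)]=E[u(\cdot,t);L]+\int_{|x|>L}\bigl(\tfrac12 u_x^2+V(u)\bigr)\,dx$. For fixed $L$ the local part converges to $E[v;L]$ as $t\to\infty$ because $u(\cdot,t)\to v$ in $C^1(\mathbb R)$ by Remark~\ref{w1}, and $E[v;L]\to E_0$ as $L\to\infty$ since $v'\in L^2(\mathbb R)$ and $V(v)\in L^1(\mathbb R)$ by Proposition~\ref{p:bump} (which is where \eqref{eq:fconc} enters, through the decay rates of $v$). Everything hinges on bounding the tail: by \eqref{power} there are $\delta_1,C>0$ with $f(s)\le-\tfrac12 k s^p$ and $0\le V(s)\le C s^{p+1}$ for $0\le s\le\delta_1$, so once $L$ is chosen with $v(L)<\delta_1/2$ we have $u(x,t)<\delta_1$ for $x\ge L$ and all large $t$, and the tail is controlled once we bound $\int_L^\infty u_x^2(x,t)\,dx$ and $\int_L^\infty u^{p+1}(x,t)\,dx$. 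The new difficulty relative to Lemma~\ref{etoeo1} is that in the degenerate regime the reaction term provides no linear damping, so the direct Gronwall argument is unavailable; instead I would argue along a carefully chosen time sequence and then upgrade to a full limit using monotonicity of $E$.

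First I would record two preliminaries. Since $E[u(\cdot,t)]$ is non-increasing by \eqref{dEdt}, the limit $E_\infty:=\lim_{t\to\infty}E[u(\cdot,t)]$ exists in $[-\infty,\infty)$; it cannot be $-\infty$, for otherwise Corollary~\ref{c:Einf} would force $u(x,t)\to1$ locally uniformly, contradicting $u(x,t)\to v(x)$ with $v(0)=\theta^\ast<1$. Hence $\int_1^\infty\!\!\int_{\mathbb R}u_t^2\,dx\,dt<\infty$. For the bound $E_\infty\ge E_0$: for each large $L$ we have $u(x,t)\le\theta^\ast$ for $|x|\ge L$ and all large $t$ (because $u(L,t)\to v(L)<\theta^\ast$ and $u$ is symmetric decreasing), hence $V(u(x,t))\ge0$ there and $E[u(\cdot,t)]\ge E[u(\cdot,t);L]\to E[v;L]$; letting $L\to\infty$ gives $E_\infty\ge E_0$.

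The main step is $E_\infty\le E_0$. I would first show $\|u(\cdot,t)\|_{L^2(\mathbb R)}^2\le C_2 t$ for large $t$: from $\frac{d}{dt}\|u(\cdot,t)\|_{L^2}^2=-2\int u_x^2\,dx+2\int u f(u)\,dx$, the fact that $u f(u)\le0$ when $u\le\theta_0$, and boundedness uniform in large $t$ of $\operatorname{meas}\{x:u(x,t)>\theta_0\}$ (which holds since $u\to v$ uniformly and $\{v>\theta_0\}$ is a bounded interval), the right-hand side stays bounded, and integration gives the claim. Next, since $\int_1^\infty\|u_t(\cdot,t)\|_{L^2}^2\,dt<\infty$, I can pick $t_n\to\infty$ with $t_n\|u_t(\cdot,t_n)\|_{L^2}^2\to0$, so that $\|u_t(\cdot,t_n)\|_{L^2}\,\|u(\cdot,t_n)\|_{L^2}\le\sqrt{C_2}\,\sqrt{t_n}\,\|u_t(\cdot,t_n)\|_{L^2}\to0$. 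Multiplying $u_t=u_{xx}+f(u)$ at $t=t_n$ by $u(\cdot,t_n)$, integrating over $[L,\infty)$ (boundary terms at $+\infty$ vanish since $u(\cdot,t_n)\in H^2(\mathbb R)$), and using $u_x(L,t_n)\le0$ together with $f(s)s\le-\tfrac12 k s^{p+1}$ on $[0,\delta_1]$, I obtain
\[
\int_L^\infty u_x^2(x,t_n)\,dx+\frac{k}{2}\int_L^\infty u^{p+1}(x,t_n)\,dx\le |u_x(L,t_n)|\,u(L,t_n)-\int_L^\infty u_t(x,t_n)u(x,t_n)\,dx ,
\]
where the last term is $o(1)$ by the choice of $t_n$ and $|u_x(L,t_n)|\,u(L,t_n)\to|v'(L)|\,v(L)$ by Remark~\ref{w1}.

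Taking $\limsup_{n\to\infty}$ and using $V(u)\le C u^{p+1}$ on the tail, the tail energy $\int_L^\infty\bigl(\tfrac12 u_x^2+V(u)\bigr)(x,t_n)\,dx$ has $\limsup$ at most $C'|v'(L)|\,v(L)$ with $C'$ independent of $L$, and likewise on $(-\infty,-L]$ by symmetry; combined with $E[u(\cdot,t_n);L]\to E[v;L]$ this yields $\limsup_n E[u(\cdot,t_n)]\le E[v;L]+2C'|v'(L)|\,v(L)$, and sending $L\to\infty$ gives $\limsup_n E[u(\cdot,t_n)]\le E_0$. Since $E[u(\cdot,t)]$ is non-increasing, $E_\infty=\lim_n E[u(\cdot,t_n)]\le E_0$, which with $E_\infty\ge E_0$ finishes the proof. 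I expect the tail estimate to be the crux: lacking the linear damping of Lemma~\ref{etoeo1}, one must play the at-most-linear growth of $\|u(\cdot,t)\|_{L^2}$ against the time-integrability of $\|u_t(\cdot,t)\|_{L^2}^2$ along the special sequence $t_n$, which is precisely what the choice of $t_n$ accomplishes.
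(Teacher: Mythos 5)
Your argument is correct, but it takes a genuinely different route from the paper's. The paper controls the tail $\int_L^\infty V(u(x,t))\,dx$ uniformly for all large $t$ by a comparison construction: it introduces a shifted bump $\bar v$ with $\bar v(L)=\delta$ and a supersolution $\bar u$ of the half-line Dirichlet problem, shows that $w=\bar u-\bar v$ is dominated (via \eqref{eq:fconc}) by the solution of the heat equation on the half-line, so that $\|w(\cdot,t)\|_{L^{p+1}}\to 0$ by the $L^2$--$L^{p+1}$ heat-kernel decay, and bounds the remaining piece $\|\bar v\|_{L^{p+1}(L,\infty)}$ by the algebraic decay of the bump. You instead derive a virial-type identity by multiplying the equation by $u$ and integrating over $[L,\infty)$, and you neutralize the term $\int_L^\infty u_t u\,dx$ along a sequence $t_n$ chosen so that $t_n\|u_t(\cdot,t_n)\|_{L^2}^2\to 0$, playing the time-integrability of $\|u_t\|_{L^2}^2$ against the at-most-linear growth of $\|u(\cdot,t)\|_{L^2}^2$; this controls $\int_L^\infty u_x^2$ and $\int_L^\infty u^{p+1}$ simultaneously by $|v'(L)|\,v(L)+o(1)$, and monotonicity of $E$ upgrades the sequential bound to the full limit. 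Your route is more elementary --- no heat-kernel $L^q$ estimates and no auxiliary supersolutions, with \eqref{eq:fconc} entering only through Proposition \ref{p:bump} --- but it controls the tail only along $\{t_n\}$, which suffices here precisely because the quantity at stake, $E[u(\cdot,t)]$, is monotone; the paper's comparison argument yields a tail bound uniform in $t\ge T$, which is strictly more information. Two steps you should spell out: the existence of $t_n$ with $t_n\|u_t(\cdot,t_n)\|_{L^2}^2\to 0$ (a standard consequence of $\int_1^\infty\|u_t(\cdot,t)\|_{L^2}^2\,dt<\infty$), and the bound $\frac{d}{dt}\|u(\cdot,t)\|_{L^2}^2\le C$, which rests on the observation that $\{x:u(x,t)>\theta_0\}$ has uniformly bounded measure for large $t$ because $u\to v$ uniformly and $v$ is symmetric decreasing with $v(0)=\theta^\ast<1$.
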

\begin{proof}
  In the spirit of Lemma \ref{etoeo1}, we only need to show that
  \begin{equation}
    \limsup_{t \to \infty} \int^{\infty}_{L}V(u(x,t))dx\rightarrow0\;\;
    \text{as}\;L\rightarrow\infty.
  \end{equation}
  By (\ref{power}), for any sufficiently small $\delta>0$ we have
  \begin{equation}
    0\leq{V(u)}\leq{2 k u^{p+1} \over p+1}\qquad \forall u \in [0, \delta],
  \end{equation}
  Furthermore, by Proposition \ref{p:bump} we can fix $L \sim
  \delta^{-\frac{p - 1}{2}} \gg 1$ such that $v(L)=\delta/2$. Because
  $u(L,t)$ converges to $v(L)$ as $t\rightarrow\infty$, for
  sufficiently large $t$ we have $u(x, t) \leq \delta$ for all $x \geq
  L$ and
  \begin{equation}
    0 \leq \int^{\infty}_{L}V(u(x,t))dx
    \leq\frac{2 k}{p+1}\|u(\cdot,t)\|^{p+1}_{L^{p+1}(L,\infty)}.
  \end{equation}
  Then we only need to control $\|u(\cdot,t)\|_{L^{p+1}(L,\infty)}$ by
  $\delta$ for large enough $t$.

  We denote by $\bar{v}(x)$ a shift of the bump solution $v(x)$ from
  Proposition \ref{p:bump} which satisfies $0 < \bar v \leq \delta$
  for all $x>L$ and
  \begin{equation}\left\{\!\!\!
      \begin{array}{lll}
        0&=&\bar{v}''+f(\bar{v}),\;\;x>L,\\
        \bar v(L)&=&\delta, \\
        \bar v(\infty) & = & 0.
      \end{array}\right.
  \end{equation}
  Then we construct a supersolution $\bar{u}$, which solves the
  half-line problem:
  \begin{equation}\left\{\!\!\!
      \begin{array}{lll}
        \bar{u}_t&=&\bar{u}_{xx}+f(\bar{u}),\;\;x>L,\;t>T,\\
        \bar{u}(L,t)&=&\delta,\\
        \bar{u}(x,T)&=&\max\{u(x,T),\bar{v}(x)\}.
      \end{array}\right.
  \end{equation}
  Note that since $\hat u(x, t) \equiv \delta$ is a supersolution for
  $\bar u(x, t)$, we have $\bar u(x, t) \leq \delta$ for all $x \geq
  L$ and $t \geq T$.  And by comparison principle we have $u(x, t)
  \leq \bar u(x, t)$ for all $x \geq L$ and $t \geq T$.

  We now introduce
  \begin{equation}
    w(x,t) := \bar{u}(x,t)-\bar{v}(x)\geq0,\;\;x>L,\;t>T,
  \end{equation}
  which satisfies the linear equation:
  \begin{equation}
    w_t=w_{xx}+f'(\tilde w) w,\;\;x>L,\;t>T,
  \end{equation}
  for some $\bar{v}\leq\tilde{w}\leq\bar{u}$, with homogeneous
  Dirichlet boundary condition
  \begin{equation}
    w(L,t)=0,\;\;t>T.
  \end{equation}
  Since $0 \leq w(x,T) \leq u(x,T)$, we have $w(\cdot, T) \in L^2(\mathbb R)$ by
  Proposition \ref{p:exist}. Furthermore, in view of \eqref{eq:fconc}
  the solution $\bar w$ of the heat equation with the same initial and
  boundary conditions:
  \begin{align}
    \label{eq:wheat}
    \bar w_t = \bar w_{xx}, \; x > L, \; t > T, \qquad \bar w(L, t) =
    0, \; t > T, \qquad \bar w(x, T) = w(x, T), \; x > L,
  \end{align}
  is a supersolution for $w$. Then, by the estimate similar to the one
  in \eqref{eq:uto0} and comparison principle and we have for some $C
  > 0$:
  \begin{align}
    \label{eq:wto0}
    \| w(\cdot, t) \|_{L^{p+1}(\mathbb R)} \leq \| \bar w(\cdot, t)
    \|_{L^{p+1}(\mathbb R)} \leq C t^{-{p-1 \over 4 (p + 1)}} \|
    w(\cdot, T) \|_{L^2(\mathbb R)} \to 0 \; \text{as} \; t \to
    \infty.
  \end{align}

  Estimating $\| \bar u(\cdot, t) \|_{L^{p+1}(\mathbb R)}$ in terms of
  $\| w(\cdot, t) \|_{L^{p+1}(\mathbb R)}$, we obtain
  \begin{equation}
    \label{eq:ubarw}
    \|\bar{u}(\cdot,t)\|_{L^{p+1}(L,\infty)}
    \leq
    \|w(\cdot,t)\|_{L^{p+1}(L,\infty)}+\|\bar{v}\|_{L^{p+1}(L,\infty)}
    \qquad \forall t \geq T.
  \end{equation}
  On the other hand, it is clear that the estimates in Proposition
  \ref{p:bump} apply to $\bar v$ as well. Therefore
  \begin{equation}
    \label{eq:vLlarge}
    \|\bar{v}\|^{p+1}_{L^{p+1}(L,\infty)}  \leq C \delta^{{p+3 \over
        2}}.
  \end{equation}
  for some $C > 0$ and all $\delta > 0$ sufficiently small. Finally,
  combining \eqref{eq:wto0} and \eqref{eq:vLlarge} in
  \eqref{eq:ubarw}, by comparison principle we conclude that $\| u
  (\cdot,t)\|_{L^{p+1}(L,\infty)}$ can be made arbitrarily small for
  all $t \geq T$ by choosing a sufficiently small $\delta$ in the
  limit $t \to \infty$.
\end{proof}

Note that we have now proved our Theorem \ref{theorembistable}.

Let us finally consider the question of threshold phenomena. We use
similar notations as in \cite{DM2010}. Let $X :=
\{\phi(x):\phi(x)\;\text{satisfies (\ref{initial}) and (SD)}\}$. We
consider a one-parameter family of initial conditions
$\phi_{\lambda}$, $\lambda>0$, satisfying the following conditions:
\begin{enumerate}
\item[(P1)] For any $\lambda>0$, $\phi_{\lambda} \in X$, the map
  $\lambda\mapsto\phi_{\lambda}$ is continuous from $\mathbb{R_{+}}$
  to $L^2(\mathbb{R})$;
\item[(P2)] If $0<\lambda_1<\lambda_2$, then
  $\phi_{\lambda_1}\leq\phi_{\lambda_2}$ and
  $\phi_{\lambda_1}\not= \phi_{\lambda_2}$ in $L^2(\mathbb R)$.
\item[(P3)] $\displaystyle \lim_{\lambda\rightarrow0}\phi_{\lambda}(x)=0$
 in $L^2(\mathbb R)$.
\end{enumerate}
\noindent We denote by $u_\lambda(x, t)$ the solution of \eqref{main}
with the initial datum $\phi_\lambda$.

Here is our main result concerning threshold phenomena for bistable
nonlinearities.

\begin{thm}
  \label{sharpbistable}
  Under the same conditions as in Theorem \ref{thmbistable}, suppose
  that (P1) through (P3) hold. Then one of the following
  two conclusions is true:\\
  1. $\displaystyle \lim_{t\rightarrow\infty} u_\lambda(x, t) =0$ uniformly in
  $\mathbb{R}$ for every
  $\lambda>0$;\\
  2. There exists $\lambda^{\ast}>0$ such that
  \begin{equation*}
    \lim_{t\rightarrow\infty}u_{\lambda}(x,t)=\left\{\!\!\!
      \begin{array}{lll}
        0, & \text{uniformly in $\mathbb{R}$,} & \text{for
          $0\leq\lambda<\lambda^{\ast}$,}\\
        v(x), & \text{uniformly in $\mathbb{R}$,} & \text{for
          $\lambda=\lambda^{\ast}$,}\\
        1, & \text{locally uniformly in $\mathbb{R}$,} & \text{for
          $\lambda>\lambda^{\ast}$.}
      \end{array}
    \right.
  \end{equation*}
\end{thm}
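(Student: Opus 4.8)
The plan is to organize the three long-time behaviors of Theorem~\ref{thmbistable} monotonically in $\lambda$ and to show the bump behavior occurs for at most one value of $\lambda$. I would partition $(0,\infty)$ into the sets $\mathcal E=\{\lambda:u_\lambda\to 0\text{ uniformly}\}$, $\mathcal B=\{\lambda:u_\lambda\to v\text{ uniformly}\}$ and $\mathcal P=\{\lambda:u_\lambda\to 1\text{ locally uniformly}\}$, which exhaust $(0,\infty)$ by Theorem~\ref{thmbistable}. By the comparison principle together with (P2), $\mathcal E$ is downward closed and $\mathcal P$ is upward closed (for the latter one also uses $\limsup_{t\to\infty}\|u_\lambda(\cdot,t)\|_{L^\infty}\le 1$ from \eqref{condition}). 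By (P3) and the continuous dependence in Proposition~\ref{p:exist} (with the embedding $H^1(\mathbb R)\hookrightarrow L^\infty(\mathbb R)$), $u_\lambda(\cdot,1)\to 0$ uniformly as $\lambda\to 0$, and since $\max_x u(x,T)<\theta_0$ forces extinction (as in the proof of Lemma~\ref{tozero}), we get $\mathcal E\supseteq(0,\delta_0)$ for some $\delta_0>0$.

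Next I would show $\mathcal E$ and $\mathcal P$ are open. If $\lambda\in\mathcal E$, then $\max_x u_\lambda(x,T)<\theta_0$ for some $T>0$, and by Proposition~\ref{p:exist} and (P1) the same holds for all $\mu$ near $\lambda$, so $\mu\in\mathcal E$. If $\lambda\in\mathcal P$, then by Corollary~\ref{spreadcondition} there is $T>0$ with $E[u_\lambda(\cdot,T)]<0$; since $u_\mu(\cdot,T)\to u_\lambda(\cdot,T)$ in $H^1(\mathbb R)$ as $\mu\to\lambda$ and $E$ is continuous on $H^1(\mathbb R)$, we get $E[u_\mu(\cdot,T)]<0$ for $\mu$ near $\lambda$, whence $\mu\in\mathcal P$. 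Therefore $\mathcal E=(0,\underline\lambda)$ and $\mathcal P=(\overline\lambda,\infty)$ with $0<\delta_0\le\underline\lambda\le\overline\lambda\le\infty$, and $\mathcal B=(0,\infty)\setminus(\mathcal E\cup\mathcal P)$. If $\underline\lambda=\infty$, then $\mathcal E=(0,\infty)$ and conclusion~1 holds. If $\underline\lambda<\infty$, then $\underline\lambda\notin\mathcal E$, and also $\underline\lambda\notin\mathcal P$ (otherwise a whole neighborhood of $\underline\lambda$ would lie in $\mathcal P$ and meet $\mathcal E=(0,\underline\lambda)$), so $\underline\lambda\in\mathcal B$. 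Granting the key fact that $\mathcal B$ contains at most one point, we conclude $\mathcal B=\{\underline\lambda\}$, and every $\lambda>\underline\lambda$ lies in neither $\mathcal E$ nor $\mathcal B$, hence in $\mathcal P$; thus $\mathcal P=(\underline\lambda,\infty)\ne\varnothing$ and conclusion~2 follows with $\lambda^{\ast}=\underline\lambda$.

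It remains to prove the key fact. Suppose $\lambda_1<\lambda_2$ with $u_{\lambda_1}\to v$ and $u_{\lambda_2}\to v$ uniformly. By (P2) and the strong maximum principle, $w:=u_{\lambda_2}-u_{\lambda_1}>0$ for all $t>0$, and $w$ solves $w_t=w_{xx}+c(x,t)w$ with $c(x,t)=f'(\xi(x,t))$, $\xi$ between $u_{\lambda_1}$ and $u_{\lambda_2}$; since $u_{\lambda_i}\to v$ uniformly and $f'$ is uniformly continuous on bounded sets, $\varepsilon(t):=\sup_x|c(x,t)-f'(v(x))|\to 0$. Let $\varphi_1>0$ be the principal eigenfunction of $\mathcal L:=\partial_{xx}+f'(v)$ with eigenvalue $\mu_1$. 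Because $v'\in L^2(\mathbb R)$ (Proposition~\ref{p:bump}) solves $\mathcal L v'=0$ and changes sign exactly once (at $x=0$, as $v$ is symmetric decreasing), $0$ cannot be the principal eigenvalue, so $\mu_1>0$; moreover $\varphi_1$ decays exponentially because $f'(v(x))\to f'(0)\le 0<\mu_1$, so $\varphi_1\in L^1(\mathbb R)\cap L^2(\mathbb R)$. Set $J(t):=\int_{\mathbb R}w(x,t)\varphi_1(x)\,dx>0$. Integrating by parts (the boundary terms vanish since $w\in H^1(\mathbb R)$ and $\varphi_1,\varphi_1'$ decay) gives
\[
  J'(t)=\int_{\mathbb R}w\,(\varphi_1''+f'(v)\varphi_1)\,dx+\int_{\mathbb R}(c-f'(v))\,w\,\varphi_1\,dx\ge(\mu_1-\varepsilon(t))\,J(t),
\]
so $J(t)\to\infty$; but $J(t)\le\|w(\cdot,t)\|_{L^\infty}\|\varphi_1\|_{L^1}\to 0$, a contradiction.

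The main obstacle is this last step, and specifically the linear instability of the bump: one needs that the principal eigenvalue $\mu_1$ of $\partial_{xx}+f'(v)$ is strictly positive with a positive, exponentially decaying eigenfunction. In the nondegenerate case $f'(0)<0$ this is classical and the sign-change argument for $v'$ applies directly; in the degenerate case $f'(0)=0$ the value $0$ sits at the edge of the essential spectrum of $\mathcal L$, so producing a positive eigenvalue (e.g.\ from the variational/mountain-pass characterization of $v$, which shows $v$ is not a local minimizer of $E$) and the attendant decay estimates requires more care. One must also verify in that setting the uniform convergence $c(\cdot,t)\to f'(v)$ and the decay of $w$ and $\varphi_1$ needed to justify the integration by parts.
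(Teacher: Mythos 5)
Your overall architecture is the same as the paper's: decompose $(0,\infty)$ into the extinction, bump and propagation sets, use comparison plus (P2) for monotonicity, use $\max_x u_\lambda(x,T)<\theta_0$ for openness of the extinction set and Corollary~\ref{spreadcondition} together with continuity of $E$ on $H^1(\mathbb R)$ and Proposition~\ref{p:exist} for openness of the propagation set, and then reduce everything to showing that the bump set $\mathcal B$ is at most a singleton via the linear instability of $v$. The only methodological difference is in how that instability is exploited: you pair $w=u_{\lambda_2}-u_{\lambda_1}$ against the full-line principal eigenfunction and run a Gronwall inequality on $J(t)=\int w\varphi_1$, whereas the paper truncates the eigenvalue problem to $H^1_0(-L,L)$, obtains a compactly supported positive eigenfunction $\phi_0^L$ with eigenvalue still of the right sign, and uses $\varepsilon\phi_0^L$ as a time-independent subsolution barrier below $w$. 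Both work; the paper's truncation buys you something concrete, namely that the closeness $|f'(\tilde u)-f'(v)|<|\nu_0^L|/2$ is only needed on a compact interval and all decay/integration-by-parts issues at infinity disappear, while your version requires the exponential decay of $\varphi_1$ and justification of the boundary terms (which do hold, since $\mu_1>0\ge f'(0)$ forces $\varphi_1\sim e^{-\sqrt{\mu_1-f'(0)}|x|}$).

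The gap you flag in the degenerate case $f'(0)=0$ is real as you have written it, but it is closed by exactly the sign-change argument you already use when $f'(0)<0$; no mountain-pass input is needed. The point is that $v'\in H^1(\mathbb R)$ (Proposition~\ref{p:bump}, part 4) is a genuine admissible function with Rayleigh quotient equal to $0$, so the supremum of the quotient is $\ge 0$ even though $0$ sits at the edge of the essential spectrum $(-\infty,0]$. If the supremum were exactly $0$ it would be attained at $v'$, hence also at $|v'|$, which would then be a nonnegative weak eigenfunction and, by Harnack, strictly positive, contradicting $v'(0)=0$. Therefore $\mu_1>0$ strictly, which places $\mu_1$ above the essential spectrum; the integrability $f'(v(\cdot))-f'(0)\in L^1(\mathbb R)$ from Proposition~\ref{p:bump} then gives existence of the positive, exponentially decaying maximizer (this is where the paper cites \cite[Chapter 11]{lieb-loss}). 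With that inserted, your proof is complete and equivalent to the paper's.
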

\begin{proof}
  We define
$$\Sigma_0:= \{\lambda>0:\;u_{\lambda}(x,t)\rightarrow0
\;as\;t\rightarrow\infty\;\text{uniformly in}\;x\in\mathbb{R}\},$$
$$\Sigma_1 := \{\lambda>0:\;u_{\lambda}(x,t)\rightarrow1
\;as\;t\rightarrow\infty\;\text{locally uniformly
  in}\;x\in\mathbb{R}\}.$$ We know that $\lambda \in \Sigma_0$ if and
only if there exists $T \geq 0$ such that $u(0,T)<\theta_0$. Clearly
the set $\Sigma_0$ is open. Furthermore, by comparison principle, if
$\hat{\lambda}\in\Sigma_0$, then for any $\lambda<\hat{\lambda}$,
$\lambda\in\Sigma_0$. So $\Sigma_0$ is an open interval.

If $\Sigma_0\neq(0,\infty)$, then the set $\Sigma_1$ is an open
  interval (semi-infinite) as well. Indeed, by Corollary
  \ref{spreadcondition} for every $\lambda \in \Sigma_1$ there exists
  $T \geq 0$ such that $E[u_\lambda(\cdot, T)] < 0$. Then by
  continuity of the energy functional in $H^1(\mathbb R)$ and
  continuous dependence in $H^1(\mathbb R)$ of the solution at $t > 0$
  on the initial data in $L^2(\mathbb R)$ (see Proposition
  \ref{p:exist}), there exists $\delta > 0$ such that for all
  $|\lambda' - \lambda| < \delta$ we have $E[u_{\lambda'}(\cdot, T)] <
  0$. Hence $\lambda' \in \Sigma_1$ as well. And by comparison
principle, if $\tilde{\lambda}\in\Sigma_1$, then for any
$\lambda>\tilde{\lambda}$, $\lambda\in\Sigma_1$. Then we know that
$\mathbb{R_{+}}\setminus(\Sigma_0\cup\Sigma_1)$ is a closed set, and,
more precisely, a closed interval.

We will prove that if $\mathbb{R_{+}}\setminus(\Sigma_0\cup\Sigma_1)$
is not empty, then it contains only one point. Consider the
Schr\"odinger-type operator
\begin{equation}
  \mathfrak{L}=-\frac{d^2}{dx^2}+ V(x), \qquad V(x) :=-f'(v(x)),
\end{equation}
and the associated Rayleigh quotient (for technical background,
see, e.g., \cite[Chapter 11]{lieb-loss}):
\begin{equation}
  \mathfrak{R}(\phi) :=\frac{\int_\mathbb{R} \left( |\phi'|^2 + V(x)
        \phi^2 \right) dx}{\int_\mathbb{R}
      \phi^2 dx}.
\end{equation}
Since by Proposition \ref{p:bump} we have $v' \in H^1(\mathbb
  R)$, translational symmetry of the problem yields (weakly
  differentiate \eqref{stationary} and test with $v'$):
\begin{equation}
  \mathfrak{R}(v')=0.
\end{equation}
Furthermore, since the function $v'$ changes sign and 
$\lim_{|x| \to \infty} V(x) \geq 0$, $v'$ is not a
minimizer of $\mathfrak{R}$. Therefore, since $V - \displaystyle
\lim_{|x| \to \infty} V(x) \in L^1(\mathbb R)$ by Proposition
\ref{p:bump}, there exists a positive function $\phi_0 \in
H^1(\mathbb{R})$ that minimizes $\mathfrak{R}$, with $\min_{\phi \in
  H^1(\mathbb R)} \mathfrak{R}(\phi) =: \nu_0<0$.  Approximating
$\phi_0$ by a function with compact support and using it as a test
function, we can then see that $\min_{\phi \in H^1_0(-L, L)}
\mathfrak{R}(\phi) =: \nu^L_0<0$ as well for a sufficiently large
$L>0$, and in this case there exists a positive minimizer $\phi^L_0\in
H^1_0(-L,L) \cap C^2(-L,L) \cap C^1([-L,L])$ such that
\begin{equation}
  \mathfrak{L}(\phi^L_0)=\nu^L_0\phi^L_0.
\end{equation}

If $\Sigma_1$ is not empty and the threshold set
  $\mathbb{R_{+}}\setminus(\Sigma_0\cup\Sigma_1)$ does not contain
only one point, then there exist two distinct values
$0 < \lambda_1<\lambda_2$ in the threshold set. Since
$f(u) \in C^1([0, \infty))$, $f'(u)$ is uniformly
continuous on $[0, \max \{1, \|\phi\|_{L^{\infty}} \}]$.
Thus, there exists $\delta > 0$ such that
\begin{equation}
  |f'(u_1)-f'(u_2)|<\frac{|\nu^L_0|}{2},
\end{equation}
for any $u_1, u_2 \in [0, \max \{1, \|\phi\|_{L^{\infty}} \}]$
satisfying $|u_1-u_2| < \delta$. Since $\lambda_{1,2} \in
\mathbb{R_{+}}\setminus(\Sigma_0\cup\Sigma_1)$, we have
$\displaystyle \lim_{t\rightarrow\infty}u_{\lambda_{1,2}}(x,t)=v(x)$ uniformly in
$x\in{\mathbb{R}}$. Then, there exists $T$ sufficiently large, such
that $|u_{\lambda_{1,2}}(x,t) - v(x)| < \delta$ for any $t \geq {T}$
and all $x \in \mathbb{R}$. So we have
\begin{equation}
  \max_{x\in[-L,L]}|f'(v(x))-f'(\tilde u(x,t))|<\frac{|\nu^L_0|}{2},
\end{equation}
for every $u_{\lambda_1}(x,t) \leq \tilde{u}(x,t) \leq
u_{\lambda_2}(x,t)$ and all $t \geq T$. However, let
$w(x,t)=u_{\lambda_2}(x,t)-u_{\lambda_1}(x,t)$, then $w(x,t)$
satisfies the following equation,
\begin{equation}
  w_t=w_{xx}+f'(\tilde{u})w,\;\;x \in \mathbb{R},\;t>0,
\end{equation}
for some $u_{\lambda_1}(x,t) \leq \tilde{u}(x,t) \leq
u_{\lambda_2}(x,t)$. By the strong maximum principle $w(x,t)>0$ for
any $x \in \mathbb{R}$ and $t>0$. Hence there exists $\varepsilon>0$
such that $w(x,T)>\varepsilon\phi^L_0(x)$. Let
$\varepsilon\phi^L_0(x)=: \underline{w}(x,t)$. Then
\begin{eqnarray}
  \underline{w}_t-\underline{w}_{xx}-f'(\tilde{u})\underline{w}
  &=&-\underline{w}_{xx}-f'(v)\underline{w}
  +(f'(v)-f'(\tilde{u}))\underline{w}\nonumber\\
  &=&\nu^L_0\underline{w}+(f'(v)-f'(\tilde{u}))
  \underline{w}\nonumber\\
  &\leq&\frac{\nu^L_0}{2}\underline{w}\nonumber\\
  &\leq&0,
\end{eqnarray}
which implies that $\underline{w}(x,t)$ is a subsolution for
$t\geq{T}$. So by comparison principle
\begin{equation}
  u_{\lambda_2}-u_{\lambda_1}\geq\varepsilon\phi^L_0(x),\;\;{\forall}t\geq{T},
\end{equation}
i.e. there exists a barrier between $u_{\lambda_1}$ and
$u_{\lambda_2}$, which contradicts the assumption that both
$u_{\lambda_1}(x,t)$ and $u_{\lambda_2}(x,t)$ converge to $v(x)$
uniformly in $\mathbb{R}$, as $t\rightarrow\infty$. It means that if
$\mathbb{R_{+}}\setminus(\Sigma_0\cup\Sigma_1)$ is not empty, then it
only contains one point.
\end{proof}

\begin{rmk}
  By Corollary \ref{spreadcondition} and comparison principle, to
  ensure that $\lambda^* < \infty$ in Theorem \ref{sharpbistable} it
  is enough if there exists $\lambda > 0$ and $\tilde{\phi}_\lambda
  \in L^2(\mathbb R)$ such that $0 \leq \tilde{\phi}_\lambda \leq
  \phi_\lambda$ and $E[\tilde{\phi}_\lambda] < 0$. This condition is
  easily seen to be verified for the family of characteristic
  functions of growing symmetric intervals studied by Kanel' \cite{K1964}. Also, by
  Theorem \ref{theorembistable} and the monotone decrease of the
  energy evaluated on solutions the condition $E[\phi_\lambda] < E_0$
  for some $\lambda > 0$ implies that $u_\lambda(x, t) \not\to
  v(x)$. In particular, if $\sup_{0 < \lambda < \bar\lambda}
  E[\phi_\lambda] < E_0$, then $\lambda^* > \bar\lambda$.
\end{rmk}

\section{Monostable Nonlinearity}
\label{s:mono}

In this section, we study the monostable nonlinearity,
i.e. $f(u)\in{C^1([0,\infty),\mathbb{R})}$,
\begin{equation}
  \label{monostable}
  f(0)=f(1)=0, \quad f(u)\left\{\!\!\!
    \begin{array}{ll}
      >0, & in \; (0,1),\\
      <0, & in \; (1,\infty).
    \end{array}\right.
\end{equation}
Moreover, we assume that the monostable nonlinearity $f(u)$ also
satisfies
\begin{equation}
  \label{monostablezero}
  f'(0)=0.
\end{equation}
Typical examples are the Arrhenius combustion nonlinearity
\begin{equation}
  f(u)=(1-u)e^{-\frac{a}{u}},\;\; a
  >0,
\end{equation}
and the generalized Fisher nonlinearity, i.e. the nonlinearity
\begin{equation}
  \label{fisher}f(u)=u^p (1-u),
\end{equation}
with exponent $p>1$.

Under conditions (\ref{monostable}), there exists one root of $V(u)$:
$u=0$, and possibly a second root $u=\theta^{\diamond}>1$. However,
since
$\displaystyle \lim_{t\rightarrow\infty}\|u(x,t)\|_{L^{\infty}(\mathbb{R})}\leq1$,
without loss of generality, we suppose that
$\|\phi\|_{L^{\infty}(\mathbb{R})}<\theta^{\diamond}$. So that we
always suppose that $V(u)\leq0$.

We have the following theorems about convergence and one-to-one
relations between the limit value of the energy and the long time
behavior of solutions, similar to the bistable case.

\begin{thm}
  \label{thmmonostable}
  Let $f$ satisfy conditions (\ref{monostable}) and
  (\ref{monostablezero}), and let $\phi(x)$ satisfy condition
  (\ref{condition}) and hypothesis (SD). Then one of the following
  holds.
  \begin{enumerate}
  \item $\displaystyle \lim_{t\rightarrow\infty}u(x,t)=1$ locally uniformly in
    $\mathbb{R}$,
  \item $\displaystyle \lim_{t\rightarrow\infty}u(x,t)=0$ uniformly in
    $\mathbb{R}$.
  \end{enumerate}
\end{thm}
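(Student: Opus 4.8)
The plan is to establish the dichotomy in exactly the spirit of Theorem~\ref{thmbistable}, exploiting the fact that for a monostable $f$ the potential $V$ has \emph{no} positive zero in the relevant range (recall $V(s)<0$ for every $s$ in the range of the solution except $s=0$), so that \eqref{stationary} has no bump solution and only two limiting regimes of the energy can occur. Since $E[u(\cdot,t)]$ is non-increasing by \eqref{dEdt}, either it is unbounded from below -- in which case $E[u(\cdot,T)]<0$ for all sufficiently large $T$ -- or it is bounded from below; these two cases are exhaustive, and I will show the first yields alternative~1 and the second yields alternative~2.

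\emph{The case $E$ unbounded below.} Here I would transcribe Lemmas~\ref{wavelike}, \ref{spread} and \ref{test} essentially verbatim, replacing the level $\theta^\ast$ throughout by the constant $\delta_0$ furnished by Proposition~\ref{p:muratov} (shrunk, if necessary, so that $\delta_0\in(0,1)$, which is legitimate since shrinking preserves both the potential bound and the inequality $\max_x u(x,t)\ge\delta_0$). The cutoff argument of Lemma~\ref{test} reduces the problem to compactly supported data, for which Lemma~\ref{wavelike} provides the wave-like property once the energy is negative; Proposition~\ref{p:muratov} then gives $R_{\delta_0}(t)\ge ct+R_0$. Because $f>0$ on $(0,1)$, the constant $\delta_0$ is a strict subsolution, so, exactly by the mechanism of Lemma~\ref{spread}, $\liminf_{t\to\infty}u(\cdot,t)\ge v_L$ uniformly on $[-L,L]$, where $v_L$ solves \eqref{stationary} on $(-L,L)$ with $v_L(\pm L)=\delta_0$, and $v_L\to\bar v$ locally uniformly as $L\to\infty$, with $\bar v$ a symmetric non-increasing solution of \eqref{stationary} on $\mathbb R$ satisfying $\delta_0\le\bar v\le 1$. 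As in Proposition~\ref{p:bump}, the first integral $\tfrac12(\bar v')^2+V(\bar v)=\mathrm{const}$ together with monotonicity forces $\bar v$ to tend at infinity to a zero of $f$; since that limit is at least $\delta_0>0$ it equals $1$, hence $\bar v\equiv1$. Letting $L\to\infty$ and using $\limsup_{t\to\infty}\|u(\cdot,t)\|_{L^\infty}\le1$ gives $u(x,t)\to1$ locally uniformly. (Once this is known, Proposition~\ref{p:propi} recovers the stronger notion of propagation.)

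\emph{The case $E$ bounded below.} This is where hypothesis (SD) does the real work. As in Lemma~\ref{origin}, multiplying \eqref{main} by $u_x$ and integrating over $(-\infty,0)$ gives $-V(u(0,t))=\int_{-\infty}^{0}u_xu_t\,dx$; using $u_x\ge0$ there together with standard parabolic regularity, this yields $|V(u(0,t))|\le C\|u_t(\cdot,t)\|_{L^2(\mathbb R)}$. Boundedness of $E$ from below and \eqref{dEdt} give $\int_1^\infty\|u_t(\cdot,t)\|_{L^2(\mathbb R)}^2\,dt<\infty$, so along some increasing $t_n\to\infty$ with $t_{n+1}-t_n\le1$ we have $\|u_t(\cdot,t_n)\|_{L^2(\mathbb R)}\to0$ and hence $V(u(0,t_n))\to0$. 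Since $V$ vanishes only at $0$ on the range of the solution, continuity of $V$ forces $u(0,t_n)\to0$, i.e. $\|u(\cdot,t_n)\|_{L^\infty}=u(0,t_n)\to0$ by (SD); this is the monostable analogue of Lemmas~\ref{origin}--\ref{tozero}, except that now only the zero limit is available. Finally, boundedness of $E$ from below lets us invoke Proposition~\ref{holder}, which gives $u(0,\cdot)\in C^{1/4}([T,\infty))$ with H\"older constant tending to $0$ as $T\to\infty$; therefore $\sup_{t\ge t_n}u(0,t)\le u(0,t_n)+o(1)\to0$ as $n\to\infty$, so $u(0,t)\to0$ as $t\to\infty$ and, by (SD), $u\to0$ uniformly in $\mathbb R$.

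\emph{Where the difficulty lies.} Almost everything is a routine transcription of the bistable arguments, with $\delta_0$ playing the role of $\theta^\ast$. The one genuinely new point is the step in the first case that excludes a nonconstant bounded positive stationary limit $\bar v$ with $\bar v\ge\delta_0$: this is handled exactly as the ``$\bar v=1$'' step in Lemma~\ref{spread}, via the phase-plane/first-integral analysis of \eqref{stationary}, a bounded symmetric decreasing solution with positive infimum having to converge at infinity to a positive zero of $f$, which for monostable $f$ can only be $1$. I expect this, together with checking that $\delta_0$ can be taken in $(0,1)$ so as to be a strict subsolution, to be the only place needing care.
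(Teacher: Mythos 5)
Your proposal is correct and follows essentially the same route as the paper: the same energy dichotomy, with the negative-energy case handled via truncation, the wave-like property, Proposition~\ref{p:muratov} and a stationary subsolution argument (the paper's Lemmas~\ref{wavelikemonostable} and~\ref{l:El0}), and the bounded-energy case handled via the $\int u_x u_t$ identity at the origin plus Proposition~\ref{holder} (the paper's Lemma~\ref{tozeromonostable}). The only detail you gloss over is that the tail estimate in Lemma~\ref{wavelike} must be slightly rearranged since $V\le 0$ in the monostable case (the paper states this as a separate Lemma~\ref{wavelikemonostable}), but this is a trivial modification.
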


\begin{thm}
  \label{theoremmonostable}
  Under the same conditions as in Theorem \ref{thmmonostable}, we have
  the following one-to-one relation.
  \begin{enumerate}
  \item $\displaystyle \lim_{t\rightarrow\infty}u(x,t)=1$ locally uniformly in
    $\mathbb{R}$ ${\Leftrightarrow}$
    $\displaystyle \lim_{t\rightarrow\infty}E[u(\cdot,t)]=-\infty$.
  \item $\displaystyle \lim_{t\rightarrow\infty}u(x,t)=0$ uniformly in
    $\mathbb{R}$ ${\Leftrightarrow}$ $\displaystyle
    \lim_{t\rightarrow\infty}E[u(\cdot,t)]=0$.
  \end{enumerate}
\end{thm}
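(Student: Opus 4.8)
The plan is to prove Theorem~\ref{theoremmonostable} together with the convergence dichotomy of Theorem~\ref{thmmonostable}, following the scheme of Section~\ref{s:bistable} but exploiting the sign condition $V(u)\le 0$ established above (which is strict, $V<0$ on $(0,1]$, because $f>0$ on $(0,1)$). This sign condition, combined with (SD), is what lets us dispense with the intermediate ``bump'' equilibrium and with the $L^{p+1}$-decay estimates used in Lemmas~\ref{etozero} and \ref{etoeo2}.

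For part~1 I would first establish the chain of implications $[\,\exists\,T\ge 0:\ E[u(\cdot,T)]<0\,]\Rightarrow[\,u(\cdot,t)\to 1\text{ locally uniformly}\,]\Rightarrow[\,E[u(\cdot,t)]\to-\infty\,]\Rightarrow[\,\exists\,T\ge 0:\ E[u(\cdot,T)]<0\,]$, the last one being immediate from monotonicity of $E$. For the first implication I mimic Lemmas~\ref{test} and \ref{spread}: truncate $u(\cdot,T)$ to a compactly supported, symmetric decreasing $\hat\phi$ with $E[\hat\phi]<0$ by continuity of $E$; since $\hat\phi$ has compact support, $\Phi_c[\hat\phi]\to E[\hat\phi]<0$ as $c\to 0^+$, so the solution $\hat u$ with datum $\hat\phi$ is wave-like and Proposition~\ref{p:muratov} gives $R_{\delta_0}(t)\ge ct+R_0$ with $\delta_0\in(0,1)$; as $\hat u$ is symmetric decreasing, $\hat u(\cdot,t)\ge\delta_0$ on $[-L,L]$ for $t$ large, and since for monostable $f$ every constant in $(0,1)$ is a strict subsolution, comparison with the solution of \eqref{main} on $(-L,L)$ with constant initial and boundary datum $\delta_0$---which increases to the solution $v_L$ of \eqref{stationary} with $v_L(\pm L)=\delta_0$, and $v_L\to 1$ locally uniformly as $L\to\infty$ by elliptic estimates since a bounded solution of \eqref{stationary} with values in $[\delta_0,1]$ is globally concave ($f\ge 0$ there) hence constant---forces $\hat u\to 1$ locally uniformly, whence $u(\cdot,t+T)\ge\hat u(\cdot,t)\to 1$. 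For the second implication I use the bound available under (SD): for $t\ge 1$, $\tfrac12\int_\mathbb{R}u_x^2\,dx=\int_0^\infty u_x^2\,dx\le\|u_x(\cdot,t)\|_{L^\infty}u(0,t)\le M^2$, while $\int_{|x|>L}V(u)\,dx\le 0$, so $E[u(\cdot,t)]\le M^2+\int_{-L}^LV(u(x,t))\,dx\to M^2+2LV(1)$ as $t\to\infty$; since $V(1)<0$, $L$ is arbitrary and $E$ is non-increasing, $E[u(\cdot,t)]\to-\infty$.

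Next I would treat the case $E[u(\cdot,t)]$ bounded below. By \eqref{dEdt} there is a sequence $t_n\to\infty$ with $t_{n+1}-t_n\le 1$ and $\|u_t(\cdot,t_n)\|_{L^2}\to 0$; multiplying \eqref{main} by $u_x$ and integrating over $(-\infty,0)$ as in Lemma~\ref{origin}, using $u_x(0,t)=0$ and $u(-\infty,t)=0$, gives $|V(u(0,t_n))|\le M\|u_t(\cdot,t_n)\|_{L^2}\to 0$, so since $V<0$ on $(0,1]$ and $\limsup_t\|u(\cdot,t)\|_{L^\infty}\le 1$ we get $u(0,t_n)\to 0$, i.e.\ $\|u(\cdot,t_n)\|_{L^\infty}\to 0$. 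Since $E$ is bounded below, Proposition~\ref{holder} applies and the $C^{1/4}$-in-$t$ Hölder constant of $u(0,\cdot)$ on $[t_n,t_{n+1}]$ tends to $0$, which upgrades this to $u(0,t)\to 0$, hence $u(x,t)\to 0$ uniformly; this, with the previous step, proves Theorem~\ref{thmmonostable}. Finally, if $u(x,t)\to 0$ uniformly then $\tfrac12\int_\mathbb{R}u_x^2\,dx\le\|u_x(\cdot,t)\|_{L^\infty}u(0,t)\to 0$ and $\int_\mathbb{R}V(u)\,dx\le 0$, so $\limsup_t E[u(\cdot,t)]\le 0$; on the other hand $E$ cannot have a negative limit (finite or $-\infty$), since then $E[u(\cdot,T)]<0$ for some $T$ and the first implication of part~1 would give $u\to 1$; hence $E[u(\cdot,t)]\to 0$. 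The reverse implication in part~2 follows at once from Theorem~\ref{thmmonostable}, since $\lim E=0$ excludes $\lim E=-\infty$ and thus $u\to 1$.

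The hard part will be the first implication above---deducing $u\to 1$ locally uniformly from a single instant of negative energy---which requires correctly assembling the wave-like/propagation machinery (Proposition~\ref{p:muratov}) with the subsolution argument. That subsolution step is in fact simpler than its bistable analogue, since every constant in $(0,1)$ is a strict subsolution and no information about an intermediate equilibrium is needed, and the sign condition $V\le 0$ removes the need for the $L^{p+1}$-decay estimates of Lemmas~\ref{etozero} and \ref{etoeo2}, so the remaining steps are routine.
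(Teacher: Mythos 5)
Your proposal is correct and follows essentially the same route as the paper: the chain ``negative energy at some time $\Rightarrow$ wave-like after truncation $\Rightarrow$ propagation via Proposition \ref{p:muratov} and a subsolution $\Rightarrow u\to1$'', the (SD) bound on $\int_{\mathbb R}u_x^2\,dx$ together with $V(1)<0$ for the converse, and the sequence $t_n$ with $V(u(0,t_n))\to0$ upgraded by Proposition \ref{holder} for the extinction case are exactly the contents of Lemmas \ref{l:E0l0}, \ref{l:El0}, \ref{tozeromonostable} and the ensuing corollary. The only difference is that you spell out details the paper leaves implicit, such as $\Phi_c\to E$ as $c\to0^+$ for compactly supported data and the concavity argument identifying $1$ as the only bounded entire solution of \eqref{stationary} above $\delta_0$.
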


Throughout the rest of this section, the hypotheses of Theorem
\ref{thmmonostable} are assumed to be satisfied. We start by
establishing the following conclusion.

\begin{lemma}
  \label{l:E0l0}
  If $\displaystyle \lim_{t\rightarrow\infty}u(x,t)=1$ locally
  uniformly in $\mathbb{R}$, then $\displaystyle
  \lim_{t\rightarrow\infty}E[u(\cdot,t)]=-\infty$. And if
  $\displaystyle \lim_{t\rightarrow\infty}u(x,t)=0$ uniformly in
  $\mathbb{R}$, then $\displaystyle
  \lim_{t\rightarrow\infty}E[u(\cdot,t)]\leq0$.
\end{lemma}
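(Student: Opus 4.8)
Since in the monostable case the problem has been normalized so that $V(u) \leq 0$ on the range of $u$, both implications become considerably simpler than the corresponding bistable arguments in Lemmas \ref{unboundedenergy} and \ref{etozero}, and in fact follow from a single a priori bound on the gradient part of the energy. First I would record that for $t \geq 1$ standard parabolic regularity provides $M > 0$ with $\|u_x(\cdot,t)\|_{L^\infty(\mathbb R)} \leq M$, and that, since $u(\cdot,t)$ is symmetric decreasing,
\begin{equation*}
  \int_{\mathbb R} \frac{1}{2} u_x^2(x,t)\,dx = \int_0^\infty u_x^2(x,t)\,dx \leq \|u_x(\cdot,t)\|_{L^\infty(\mathbb R)} \int_0^\infty |u_x(x,t)|\,dx = \|u_x(\cdot,t)\|_{L^\infty(\mathbb R)}\, u(0,t),
\end{equation*}
so that the gradient energy is bounded by $M \max\{1, \|\phi\|_{L^\infty(\mathbb R)}\}$ uniformly in $t \geq 1$. (That $E[u(\cdot,t)]$ is finite for $t > 0$ follows from Proposition \ref{p:exist} together with the bound $|V(u)| \leq C u^2$ near $u = 0$ coming from $f'(0) = 0$.)

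For the first statement, suppose $u(x,t) \to 1$ locally uniformly. Using $V \leq 0$ to discard the contribution of $\{|x| > L\}$, for every fixed $L > 0$ and all $t \geq 1$ we obtain
\begin{equation*}
  E[u(\cdot,t)] \leq \int_{-L}^{L} V(u(x,t))\,dx + M \max\{1, \|\phi\|_{L^\infty(\mathbb R)}\}.
\end{equation*}
By local uniform convergence and continuity of $V$, the integral tends to $2L\, V(1)$ as $t \to \infty$, and $V(1) = -\int_0^1 f(s)\,ds < 0$ for a monostable nonlinearity. Hence $\limsup_{t \to \infty} E[u(\cdot,t)] \leq 2L\, V(1) + M\max\{1,\|\phi\|_{L^\infty(\mathbb R)}\}$ for every $L > 0$, and sending $L \to \infty$ yields $\lim_{t \to \infty} E[u(\cdot,t)] = -\infty$.

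For the second statement, suppose $u(x,t) \to 0$ uniformly in $\mathbb R$. Then, again using $V \leq 0$ together with the gradient bound above,
\begin{equation*}
  E[u(\cdot,t)] \leq \int_{\mathbb R} \frac{1}{2} u_x^2(x,t)\,dx \leq M\, u(0,t) = M\, \|u(\cdot,t)\|_{L^\infty(\mathbb R)} \longrightarrow 0 \quad \text{as } t \to \infty,
\end{equation*}
whence $\limsup_{t \to \infty} E[u(\cdot,t)] \leq 0$, as claimed. There is no real obstacle in this argument; the only mild subtleties are the role of hypothesis (SD) in the gradient bound and the need to know a priori that the energy is well-defined and finite along the solution, both of which are dispatched above.
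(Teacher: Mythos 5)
Your proof is correct and follows essentially the same route as the paper: split the energy into gradient and potential parts, bound the gradient part via (SD) and $\int_0^\infty|u_x|\,dx=u(0,t)$ together with standard parabolic regularity, and use $V\leq 0$ with local uniform convergence to $1$ (resp.\ uniform convergence to $0$) to handle the potential part. The paper is merely terser about why $\int_{\mathbb R}V(u(x,t))\,dx\to-\infty$; your $L$-truncation argument is exactly the justification it implicitly relies on.
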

\begin{proof}
  Under condition (\ref{monostable}), we know that
  $\int_{\mathbb{R}}V(u)dx \leq 0$. And if $u\rightarrow1$ locally
  uniformly in $\mathbb{R}$, then
\begin{equation}
  \lim_{t\rightarrow\infty}\int_{\mathbb{R}}V(u(x,t))dx=-\infty.
\end{equation}
By hypothesis (SD), we have
\begin{eqnarray}
  \int_{\mathbb{R}}\frac{1}{2}u^2_x(x,t)dx&=&\int_0^{\infty}u^2_x(x,t)dx
  \nonumber\\
  &\leq&\|u_x(x,t)\|_{L^{\infty}(\mathbb{R})} u(0,t). \label{eq:ux2}
\end{eqnarray}
Then by standard parabolic regularity the left-hand side of
\eqref{eq:ux2} is bounded uniformly in time. So we proved the first
conclusion. On the other hand, if $u\rightarrow0$ uniformly in
$\mathbb{R}$, then the left-hand side of \eqref{eq:ux2} converges to
$0$. In view of $V(u(x, t)) \leq 0$, we proved the second conclusion.
\end{proof}

Similarly to the Lemma \ref{wavelike} for the bistable case, we have
the following lemma for the monostable case.

\begin{lemma}
  \label{wavelikemonostable}
  Assume that there exists $c_0>0$ such that $\phi(x) \in
  H^1_{c_0}(\mathbb{R})$. If there exists $T \geq 0$ such that
  $E[u(\cdot,T)]<0$, then $u(x,t)$ is wave-like.
\end{lemma}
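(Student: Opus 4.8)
The plan is to mimic the proof of Lemma~\ref{wavelike}, exploiting the fact that in the monostable setting $V(u) \le 0$ for all $u$ in the bounded range $[0, \max\{1, \|\phi\|_{L^\infty(\mathbb R)}\}]$ of the solution, which makes the argument simpler than in the bistable case. First I would invoke Proposition~\ref{p:exist} to note that $\phi \in H^1_{c_0}(\mathbb R)$ forces $u(\cdot, T) \in H^1(\mathbb R) \cap H^1_{c_0}(\mathbb R)$, so that $\tfrac12 u_x^2(\cdot, T)$ is integrable against both $dx$ and $e^{c_0 x}\,dx$. Writing $E[u(\cdot, T)] = -\varepsilon$ with $\varepsilon > 0$, the goal is to produce a sufficiently small $c \in (0, c_0)$ with $\Phi_c[u(\cdot, T)] < 0$.

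The next ingredient is that, since $f(0) = 0$ and $f \in C^1$, one has $|V(u)| \le C u^2$ on the range of the solution, so $V(u(\cdot, T)) \in L^1(\mathbb R)$ and $\int_0^\infty e^{c_0 x}\,|V(u(x, T))|\,dx < \infty$ in view of $u(\cdot, T) \in L^2(\mathbb R) \cap L^2_{c_0}(\mathbb R)$. Hence I can choose $L > 0$ so large that the contributions of the two tails $\{x < -L\}$ and $\{x > L\}$ to $E[u(\cdot, T)]$, as well as the quantity $\int_L^\infty e^{c_0 x}\big(\tfrac12 u_x^2 + |V(u)|\big)\,dx$, are each as small as we please relative to $\varepsilon$. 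On the left tail one then uses $e^{cx} \le 1$ (valid for $x < 0$) and on the right tail $e^{cx} \le e^{c_0 x}$ (valid for $x > 0$, $c \le c_0$) together with $V \le 0$, so that the $\Phi_c$-contributions of both tails are small uniformly in $c \in (0, c_0]$; meanwhile, by the choice of $L$, the unweighted integral of $\tfrac12 u_x^2 + V(u)$ over the middle block $[-L, L]$ equals $E[u(\cdot, T)]$ minus the small tail corrections, hence is still a definite negative number.

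It then remains to pass from the unweighted to the weighted integral over the bounded interval $[-L, L]$, where the weight is uniformly close to $1$: $|e^{cx} - 1| \le e^{cL} - 1 \to 0$ as $c \to 0^+$, and $\tfrac12 u_x^2 + |V(u)|$ is integrable on $[-L, L]$, so $\int_{-L}^{L} e^{cx}\big(\tfrac12 u_x^2 + V(u)\big)\,dx$ converges to the corresponding unweighted integral as $c \to 0$ and stays strictly negative for $c$ small. Summing the middle block with the two tail contributions gives $\Phi_c[u(\cdot, T)] < 0$ for all sufficiently small $c \in (0, c_0)$, which is exactly the statement that $u$ is wave-like.

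I do not expect a real obstacle: the argument is structurally identical to the bistable one, and the sign condition $V \le 0$ in fact removes its only delicate point, namely the need to control the far-field contribution of $V$ from below. The only thing that requires attention is the bookkeeping that makes the tail estimates uniform in $c \in (0, c_0]$ — handled by the elementary bounds $e^{cx} \le 1$ for $x \le 0$ and $e^{cx} \le e^{c_0 x}$ for $x \ge 0$ — and verifying that $V(u(\cdot, T))$ belongs to the relevant (weighted) $L^1$ spaces, which follows from the quadratic bound on $V$ near the origin.
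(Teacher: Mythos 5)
Your proposal is correct and follows essentially the same route as the paper's proof: split $\Phi_c[u(\cdot,T)]$ into the two tails and a middle block $[-L,L]$, control the right tail by $e^{cx}\le e^{c_0x}$ and the left tail by $e^{cx}\le 1$ together with $V\le 0$ and the quadratic bound $|V(u)|\le Cu^2$, and then let $c\to 0^+$ so that the weighted middle integral stays strictly negative. The only difference is that you make explicit the $L^1$-integrability of $V(u(\cdot,T))$ and the uniformity in $c$ of the tail bounds, which the paper leaves implicit.
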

\begin{proof}
  Since $\phi(x) \in H_{c_0}^1(\mathbb{R})$, we have $u(x,T) \in
  H^1(\mathbb{R})\cap{H_{c_0}^1(\mathbb{R})}$. For any small
  $\varepsilon>0$, when $E[u(x,T)]=-\varepsilon<0$, there exists $L>0$
  such that
  \begin{equation}
    0\leq \frac{1}{2} \int_{L}^{\infty}e^{c_{0}x}u_x^2(x,T) dx
    <\frac{\varepsilon}{8},
  \end{equation}
  \begin{equation}
    -\frac{\varepsilon}{8}<\int_L^\infty e^{c_0x}V(u(x,T))dx\leq0.
  \end{equation}
  Note that if we use smaller $c \geq 0$ instead of $c_0$ in the above
  inequalities, they still hold. And by the definition of $L$ we know
  that
\begin{equation}
  \int_{-L}^L
  \left( \frac{1}{2}u_x^2(x,T)+V(u(x,T)) \right) dx<-\frac{3\varepsilon}{4}.
\end{equation}
So we can find a sufficiently small $c > 0$ such that $c < c_0$ and
\begin{equation}
  \int_{-L}^L e^{c x} \left( \frac{1}{2}u_x^2(x,T)
    +V(u(x,T)) \right) dx<-\frac{\varepsilon}{2}.
\end{equation}
Then we have
\begin{equation}
  \Phi_c[u(\cdot,T)]=\int_{\mathbb{R}}e^{c x}
  \left( \frac{1}{2}u_x^2(x,T)+V(u(x,T)) \right) dx<0.
\end{equation}
So $u$ is wave-like.
\end{proof}

In contrast to the bistable case, for monostable case boundedness of
energy always implies extinction.

\begin{lemma}
  \label{tozeromonostable}
  Suppose that $E[u(\cdot,t)]$ is bounded from below for all $t\geq1$,
  then $\displaystyle \lim_{t\rightarrow\infty}u(x,t)=0$ uniformly in
  $\mathbb{R}$.
\end{lemma}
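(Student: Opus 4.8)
The plan is to mimic the structure of the bistable analysis (Lemmas \ref{origin}--\ref{tobump}), but to exploit the fact that for a monostable nonlinearity there is no nontrivial bounded-energy stationary ``bump'' to converge to, so the only bounded-energy outcome is extinction. First I would run the argument of Lemma \ref{origin} verbatim: multiply \eqref{main} by $u_x$, integrate over $(-\infty, 0)$, and use $u(-\infty, t) = 0$ (which holds because $u(\cdot, t) \in H^1(\mathbb R)$ by Proposition \ref{p:exist}) to obtain
\begin{equation*}
  \int_{-\infty}^0 u_x(x,t) u_t(x,t)\, dx = -V(u(0,t)).
\end{equation*}
Since $E[u(\cdot, t)]$ is bounded below, \eqref{dEdt} gives $\int_1^\infty \int_{\mathbb R} u_t^2 \, dx\, dt < \infty$, so there is an unbounded increasing sequence $\{t_n\}$ with $\|u_t(\cdot, t_n)\|_{L^2(\mathbb R)} \to 0$; combined with the uniform bound on $\|u_x\|_{L^\infty}$ from standard parabolic regularity and the bound $|u(0,t)| \le \max\{1, \|\phi\|_{L^\infty}\}$, this forces $V(u(0, t_n)) \to 0$. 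Because under \eqref{monostable} the only zero of $V$ in $[0, \theta^\diamond)$ is $u = 0$ (here $V(u) \le 0$ with equality only at $u = 0$, after the normalization $\|\phi\|_{L^\infty} < \theta^\diamond$), and $\limsup_n \|u(\cdot, t_n)\|_{L^\infty} \le 1$, we conclude $u(0, t_n) = \max_{x} u(x, t_n) \to 0$.

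Next I would convert this sequential statement into a full limit. The key observation is the same one used in Lemma \ref{tozero}: since $u(x,t)$ is symmetric decreasing with maximum at the origin, once $\max_x u(x, T) < \theta_1$ for a single time $T$ (where $\theta_1$ is any level below which $f$ is nonincreasing and $f(u) > 0$ — actually we want $f$ to drive $u$ down, so I'd use that $f'(0) = 0$ together with \eqref{power}-type behavior... wait, monostable has $f > 0$ on $(0,1)$, so $u$ does not decay pointwise by the reaction term alone). Let me correct: for monostable $f$, extinction cannot come from the reaction term, so instead I would argue via the $L^2$ or $L^{p+1}$ decay estimate using the heat semigroup as a supersolution, exactly as in the proof of Lemma \ref{etozero}. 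Fix $T = t_n$ large enough that $\|u(\cdot, t_n)\|_{L^\infty} =: \epsilon$ is small; since $f'(0) = 0$, for any $\eta > 0$ we have $f(u) \le \eta u$ for $0 \le u \le \epsilon$ with $\epsilon$ small, and $u$ stays below $\epsilon$ as long as... no — $f > 0$ pushes $u$ up. The correct mechanism: compare $u(\cdot, t)$ for $t \ge t_n$ with the solution $\bar u$ of $\bar u_t = \bar u_{xx} + \eta \bar u$ with $\bar u(\cdot, t_n) = u(\cdot, t_n)$ as long as $u \le \epsilon$; but $e^{\eta t}$ growth could ruin smallness. The genuine fix is to use that $\|u(\cdot, t_n)\|_{L^2(\mathbb R)} \to 0$ is \emph{not} automatic, so instead I would bound $\|u(\cdot, t_n)\|_{L^2}$ first.

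Concretely, the cleanest route: since $E[u(\cdot, t)]$ is bounded below and $V(u) \le 0$, boundedness of $E$ forces $\int_{\mathbb R} u_x^2(x, t)\, dx$ bounded and $\int_{\mathbb R} |V(u(x,t))|\, dx$ bounded; but that alone does not give $L^2$ smallness. So I would instead apply the preceding two lemmas of the section (\ref{l:E0l0} and \ref{wavelikemonostable}): if $u$ does \emph{not} converge to $0$ uniformly, then by the sequential argument above $u(0, t_n) \to 0$ fails only if... Actually since $u(0, t_n) \to 0$ \emph{is} established, I would argue: pick $n$ with $u(0, t_n) < \theta_0$ for some small fixed $\theta_0$; then I claim $E[\widehat\phi] < 0$ is \emph{impossible} for the truncation — no. Let me instead finish by contradiction with the \emph{wave-like} machinery: if $u$ does not go to $0$ uniformly, one shows $\limsup_t \max_x u(x,t) > 0$, hence (as $u(0,t_n) \to 0$ and $u$ is $C^{1/4}$ in $t$ by Proposition \ref{holder}, whose hypothesis holds since $E$ is bounded below) $\max_x u(x,t) \to 0$ uniformly in $t$ — here Proposition \ref{holder} does the interpolation between the sequence $\{t_n\}$ (chosen with $t_{n+1} - t_n \le 1$, cf. Remark \ref{r:tnl1}) and intermediate times. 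Thus $\max_x u(x,t) \to 0$, which is the claim. \textbf{The main obstacle} is precisely this last interpolation step: one must verify that the sequence $\{t_n\}$ can be taken with bounded gaps (Remark \ref{r:tnl1}) and that the $C^{1/4}$ Hölder constant from Proposition \ref{holder} tends to $0$, so that $u(0, t) \to 0$ for \emph{all} $t$, not just along $\{t_n\}$; once $u(0,t) \to 0$ the symmetric-decreasing property upgrades this to uniform convergence on $\mathbb R$ for free.
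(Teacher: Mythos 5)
Your final argument is exactly the paper's proof: the computation of Lemma \ref{origin} gives $u(0,t_n)\to 0$ along a sequence with bounded gaps (since $0$ is the only root of $V$ below $\theta^\diamond$), Proposition \ref{holder} with its vanishing H\"older constant upgrades this to $u(0,t)\to 0$ for all $t$, and (SD) turns that into uniform convergence on $\mathbb R$. The detours in the middle (reaction-driven decay, heat-kernel supersolutions) are correctly abandoned and unnecessary, and the ``main obstacle'' you flag at the end is already fully resolved by Proposition \ref{holder} and Remark \ref{r:tnl1} exactly as stated.
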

\begin{proof}
  Since the unique root of $V(u)$ is $0$, arguing as in Lemma
  \ref{origin} we know that $u(0,t)\rightarrow0$ as
  $t\rightarrow\infty$. Then we prove this lemma by using Proposition
  \ref{holder}.
\end{proof}

\begin{lemma}
  \label{l:El0}
  Suppose that there exists $T\geq0$ such that $E[u(\cdot,T)]<0$. Then
  $\displaystyle \lim_{t\rightarrow\infty}u(x,t)=1$ locally uniformly in
  $\mathbb{R}$.
\end{lemma}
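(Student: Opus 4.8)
The plan is to adapt the strategy of Lemmas \ref{spread} and \ref{test} from the bistable section to the present setting, with the threshold level $\theta^\ast$ replaced by the spreading level $\delta_0$ furnished by Proposition \ref{p:muratov}. Since by \eqref{dEdt} the map $t\mapsto E[u(\cdot,t)]$ is non-increasing, we may replace $T$ by any larger time and so assume $T>0$, whence $u(\cdot,T)\in H^1(\mathbb R)$ by Proposition \ref{p:exist}. As in the proof of Lemma \ref{test}, I would pick a symmetric non-increasing cutoff $\varphi_L$ and set $\hat\phi:=\varphi_L\,u(\cdot,T)$; since $\hat\phi\to u(\cdot,T)$ in $H^1(\mathbb R)$ and $E$ is continuous on $H^1(\mathbb R)$, for $L$ large enough $E[\hat\phi]<0$. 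The function $\hat\phi$ is compactly supported, hence lies in $H^1_c(\mathbb R)$ for every $c>0$, and, being the product of two nonnegative symmetric decreasing functions, it is itself symmetric decreasing. Let $\hat u$ solve \eqref{main} with $\hat u(\cdot,0)=\hat\phi$; by the remark following Proposition \ref{p:exist}, $\hat u(\cdot,t)$ is symmetric decreasing for all $t>0$, and by Lemma \ref{wavelikemonostable} it is wave-like.

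Next I would apply Proposition \ref{p:muratov} to $\hat u$ (its hypotheses hold, since \eqref{monostablezero} gives $f'(0)=0$ and $\hat u$ is wave-like): there are $\delta_0>0$, $c>0$, $R_0\in\mathbb R$ and $\hat T\ge 0$ with $R_{\delta_0}(t)\ge ct+R_0$ for all $t\ge\hat T$, and, shrinking $\delta_0$ if necessary, we may take $\delta_0\in(0,1)$. By symmetric decrease this means $\hat u(x,t)\ge\delta_0$ whenever $|x|\le ct+R_0$, so for each fixed $L>0$ there is $T_L\ge\hat T$ with $\hat u(\cdot,t)\ge\delta_0$ on $[-L,L]$ for all $t\ge T_L$. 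Since $\delta_0\in(0,1)$ we have $f(\delta_0)>0$, so the constant $\delta_0$ is a strict subsolution of \eqref{main} on $(-L,L)$, and, exactly as in Lemma \ref{spread}, the solution $\underline u_L$ of \eqref{main} with $\underline u_L(\cdot,T_L)=\delta_0$ on $(-L,L)$ and $\underline u_L(\pm L,t)=\delta_0$ increases to $v_L$, the solution of \eqref{stationary} on $(-L,L)$ with $v_L(\pm L)=\delta_0$. The comparison principle then gives $\liminf_{t\to\infty}\hat u(\cdot,t)\ge v_L$ uniformly on $[-L,L]$. Letting $L\to\infty$, standard elliptic estimates yield $v_L\to\bar v$ in $C^2_{\mathrm{loc}}(\mathbb R)$, where $\bar v$ solves \eqref{stationary} on all of $\mathbb R$ and $\delta_0\le\bar v\le 1$.

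The crux is then to identify $\bar v\equiv 1$. Using the first integral $\tfrac12(\bar v')^2-V(\bar v)\equiv\mathrm{const}$ together with the fact that under \eqref{monostable} (and the normalization $\|\phi\|_{L^\infty(\mathbb R)}<\theta^\diamond$) the potential $V$ is negative on $(0,\theta^\diamond)$ with a strict interior minimum at $u=1$, a short phase-plane argument shows that \eqref{stationary} admits no bounded solution on $\mathbb R$ bounded away from $0$ other than $v\equiv 1$. This is where the monostable case genuinely departs from the bistable one — there is no ``bump'' solution, so the only candidate limit is the constant $1$ — and it is the step I expect to require the most care. Granting it, $v_L\to 1$ locally uniformly, so combining with $\limsup_{t\to\infty}\|\hat u(\cdot,t)\|_{L^\infty(\mathbb R)}\le 1$ (a consequence of \eqref{condition}) we obtain $\hat u(x,t)\to 1$ locally uniformly in $\mathbb R$. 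Finally, since $\hat\phi\le u(\cdot,T)$, the comparison principle gives $u(x,t+T)\ge\hat u(x,t)$, and hence $\lim_{t\to\infty}u(x,t)=1$ locally uniformly in $\mathbb R$, as claimed.
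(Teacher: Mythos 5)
Your argument is correct and follows essentially the same route as the paper: truncate the data at time $T$ as in Lemma \ref{test} to obtain a compactly supported, wave-like comparison solution, invoke Proposition \ref{p:muratov} for the spreading of the leading edge, and then repeat the subsolution construction of Lemma \ref{spread}, using that the only entire solution of \eqref{stationary} bounded below by $\delta_0$ is $v\equiv 1$ (the step you flag as the crux; since $f\ge 0$ on $[0,1]$ such a solution is concave and bounded, hence constant, which settles it at once). No discrepancies to report.
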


\begin{proof}
  The proof is similar to the proof of Lemma \ref{test}. If
  $E[u(\cdot,T)]<0$ for some $T\geq0$, then there exists a
  sufficiently small $c > 0$ such that $\Phi_c[\varphi_L
  u(\cdot,T)]<0$ for large enough $L > 0$, where the cutoff function
  $\varphi_L$ is as in Lemma \ref{test}. And by the conditions
  (\ref{monostable}) and (\ref{monostablezero}), there exists 
  $\delta_0>0$, such that condition (\ref{largepotential}) holds. 
  Then from Proposition \ref{p:muratov}, we know that $R_{\delta_0} (t) >ct+R_0$
  for some $R_0 \in \mathbb R$. Similarly to Lemma \ref{spread}, since the unique
  solution of equation (\ref{stationary}) larger than $\delta_0$ is
  $1$ in the whole of $\mathbb R$, we conclude that
  $\displaystyle \lim_{t\rightarrow\infty}u(x,t)=1$ locally uniformly in
  $\mathbb{R}$.
\end{proof}

\noindent An immediate consequence of Lemma \ref{l:E0l0} and Lemma
\ref{l:El0} is the following.

\begin{cor}
  Suppose that $\displaystyle \lim_{t\rightarrow\infty}u(x,t)=0$ uniformly in
  $\mathbb{R}$, then $\displaystyle \lim_{t\rightarrow\infty}E[u(\cdot,t)]=0$.
\end{cor}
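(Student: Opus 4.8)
The plan is to sandwich $\lim_{t\to\infty}E[u(\cdot,t)]$ between $0$ and $0$, using the two preceding lemmas together with the monotone dissipation identity \eqref{dEdt}. Recall first that by \eqref{dEdt} the map $t\mapsto E[u(\cdot,t)]$ is non-increasing on $(0,\infty)$ (and $E[u(\cdot,t)]$ is finite for $t>0$ since $u(\cdot,t)\in H^1(\mathbb R)$ by Proposition \ref{p:exist}), so the limit in question always exists in $[-\infty,\infty)$.

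For the upper bound, I would simply invoke Lemma \ref{l:E0l0}: the hypothesis $u(x,t)\to 0$ uniformly in $\mathbb R$ gives directly $\lim_{t\to\infty}E[u(\cdot,t)]\le 0$. For the matching lower bound, I would argue by contradiction via Lemma \ref{l:El0}. If $E[u(\cdot,t_0)]<0$ for some $t_0>0$, then, since $u(\cdot,t_0)$ again satisfies \eqref{initial} and hypothesis (SD) (by Proposition \ref{p:exist} and the remark following it), Lemma \ref{l:El0} applied to the solution started from $u(\cdot,t_0)$ would force $u(x,t)\to 1$ locally uniformly in $\mathbb R$ as $t\to\infty$, contradicting $u(x,t)\to 0$ uniformly. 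Hence $E[u(\cdot,t)]\ge 0$ for every $t>0$, and therefore $\lim_{t\to\infty}E[u(\cdot,t)]\ge 0$.

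Combining the two estimates yields $\lim_{t\to\infty}E[u(\cdot,t)]=0$. I do not anticipate any real obstacle here: all of the analytic work is already contained in Lemmas \ref{l:E0l0} and \ref{l:El0} and in the energy identity \eqref{dEdt}; the only minor point to keep track of is that $E$ need not be finite at $t=0$ (the initial datum lies merely in $L^2\cap L^\infty$), so the argument is phrased for $t>0$, where the instantaneous smoothing in Proposition \ref{p:exist} places $u(\cdot,t)$ in $H^1(\mathbb R)$ and legitimizes all manipulations with $E$.
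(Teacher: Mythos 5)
Your proposal is correct and matches the paper's argument: the paper states the corollary as an immediate consequence of Lemma \ref{l:E0l0} (which gives $\lim_{t\to\infty}E[u(\cdot,t)]\le 0$ under the extinction hypothesis) and Lemma \ref{l:El0} (which, by contraposition, forces $E[u(\cdot,t)]\ge 0$ for all $t$, since negative energy at any time would imply convergence to $1$). Your additional remark about working with $t>0$ to ensure $E$ is finite is a harmless refinement of the same reasoning.
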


\noindent We have thus established Theorems \ref{thmmonostable} and
\ref{theoremmonostable}.

Our last theorem in this section concerns with the threshold phenomena
for monostable nonlinearities.

\begin{thm}
  \label{sharpmonostable}
  Under the same conditions as in Theorem \ref{thmmonostable}, suppose
  that (P1) through (P3) hold. Then one of the following holds:
  \begin{enumerate}
  \item $\displaystyle \lim_{t\rightarrow\infty} u_\lambda(x, t) =0$
    uniformly in $x \in \mathbb{R}$ for every $\lambda>0$;
  \item $\displaystyle \lim_{t\rightarrow\infty} u_\lambda(x, t) = 1$ locally
    uniformly in $x \in \mathbb{R}$ for every $\lambda>0$;
  \item There exists $\lambda^{\ast} > 0$ such that
    \begin{equation*}
      \lim_{t\rightarrow\infty}u_{\lambda}(x,t)=\left\{\!\!\!
        \begin{array}{lll}
          0, & \text{uniformly in $x \in \mathbb{R}$,} & \text{for
            $0 < \lambda\leq\lambda^{\ast}$,} \\
          1, & \text{locally uniformly in $x \in \mathbb{R}$,} & \text{for
            $\lambda>\lambda^{\ast}$.}
        \end{array}\right.
    \end{equation*}
  \end{enumerate}
\end{thm}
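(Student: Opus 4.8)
\emph{Proof strategy.}
The plan is to follow the argument used for Theorem~\ref{sharpbistable}, which simplifies considerably in the monostable setting because Theorem~\ref{thmmonostable} rules out any intermediate long-time behavior: every solution either propagates or goes extinct, so there is no analog of the ``threshold set'' that forced the delicate Schr\"odinger-operator/eigenfunction-barrier argument in the bistable case. Accordingly, I would set
\[
  \Sigma_0 := \{\lambda>0:\ u_\lambda(x,t)\to 0 \text{ uniformly in } \mathbb R\text{ as }t\to\infty\},\qquad
  \Sigma_1 := \{\lambda>0:\ u_\lambda(x,t)\to 1 \text{ locally uniformly in }\mathbb R\}.
\]
By Theorem~\ref{thmmonostable} these two sets are disjoint and $\Sigma_0\cup\Sigma_1=(0,\infty)$. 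The comparison principle together with (P2) shows that $\Sigma_0$ is a down-set and $\Sigma_1$ an up-set: if $\lambda\in\Sigma_1$ and $\lambda'>\lambda$, then $u_{\lambda'}\ge u_\lambda$, so $u_{\lambda'}$ cannot go extinct and hence (again by Theorem~\ref{thmmonostable}) $\lambda'\in\Sigma_1$; symmetrically, $0\le u_{\lambda'}\le u_\lambda\to 0$ whenever $\lambda'<\lambda\in\Sigma_0$.

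Next I would show that $\Sigma_1$ is open. By Lemmas~\ref{l:E0l0} and~\ref{l:El0}, together with the monotone decrease of $t\mapsto E[u_\lambda(\cdot,t)]$ from \eqref{dEdt}, membership $\lambda\in\Sigma_1$ is equivalent to the existence of some $T\ge 0$ with $E[u_\lambda(\cdot,T)]<0$. Given such $\lambda$ and $T$, Proposition~\ref{p:exist} (continuous dependence in $H^1(\mathbb R)$ of the solution at any $t>0$ on the initial datum in $L^2(\mathbb R)$) combined with (P1) gives continuity of $\lambda'\mapsto u_{\lambda'}(\cdot,T)$ into $H^1(\mathbb R)$; since $E$ is continuous on $H^1(\mathbb R)$, we get $E[u_{\lambda'}(\cdot,T)]<0$, hence $\lambda'\in\Sigma_1$, for all $\lambda'$ sufficiently close to $\lambda$.

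Finally I would conclude: an up-set which is open in $(0,\infty)$ is necessarily of the form $(\lambda^*,\infty)$ with $\lambda^*:=\inf\Sigma_1\in[0,\infty]$ (the infimum cannot be attained, for otherwise openness would place points below $\lambda^*$ into $\Sigma_1$). Consequently $\Sigma_0=(0,\infty)\setminus\Sigma_1=(0,\lambda^*]$. The three alternatives of the theorem correspond exactly to $\lambda^*=0$ (then $\Sigma_1=(0,\infty)$, conclusion~2), $\lambda^*=\infty$ (then $\Sigma_0=(0,\infty)$, conclusion~1), and $0<\lambda^*<\infty$ (conclusion~3, with $\lambda^*\in\Sigma_0$, i.e.\ extinction at $\lambda=\lambda^*$). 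The only place requiring care is the energy characterization $\lambda\in\Sigma_1\Leftrightarrow\exists\,T\ge 0,\ E[u_\lambda(\cdot,T)]<0$ and the ensuing openness of $\Sigma_1$, but this is a direct transcription of the corresponding step in the proof of Theorem~\ref{sharpbistable}; unlike the bistable case, there is no hard obstacle here, precisely because the absence of a nonempty threshold set (Theorem~\ref{thmmonostable}) makes the eigenfunction-barrier argument unnecessary.
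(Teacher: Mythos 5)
Your proposal is correct and follows essentially the same route as the paper: the paper's own proof is a one-line reduction to the argument of Theorem \ref{sharpbistable}, observing that $\Sigma_1$ is an open interval and that Theorem \ref{thmmonostable} leaves no intermediate behavior, which is precisely the argument you spell out (including the characterization $\lambda\in\Sigma_1\Leftrightarrow E[u_\lambda(\cdot,T)]<0$ for some $T$, via Lemmas \ref{l:E0l0} and \ref{l:El0}, and openness via Proposition \ref{p:exist}).
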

\begin{proof}
  Similarly to the proof of Theorem \ref{sharpbistable}, if neither
  $\Sigma_0 = \varnothing$ nor $\Sigma_1 = \varnothing$, then
  $\Sigma_1$ is an open interval. The conclusion then follows.
\end{proof}

Note that our sharp transition result above is nontrivial, e.g., for
the generalized Fisher nonlinearity in \eqref{fisher} with $p>p_c$,
where $p_c = 3$ is the Fujita exponent (see, e.g., \cite[Theorem
3.2]{AW1978}).

\section{Ignition Nonlinearity}
\label{s:ign}

The ignition nonlinearity $f(u)\in{C^1([0,\infty),\mathbb{R})}$
satisfies
\begin{equation}\label{ignition}
f(u)\left\{\!\!\!
\begin{array}{lll}
=0, & in \; [0,\theta_0]\cup\{1\},\\
>0, & in \; (\theta_0,1),\\
<0, & in \; (1,\infty),
\end{array}\right.
\end{equation}
for some $\theta_0\in(0,1)$. We also suppose that there exists
$\delta>0$ such that
\begin{equation}
  \label{convex}
  f(u)~ \text{is convex on} ~[\theta_0,\theta_0+\delta].
\end{equation}
Under \eqref{condition} and (\ref{ignition}), except on the interval
$[0,\theta_0]$, there exists at most one root $u=\theta^{\diamond}>1$
of $V(u)$. However, since
$\displaystyle \limsup_{t\rightarrow\infty}\|u(x,t)\|_{L^{\infty}(\mathbb{R})}\leq1$,
without loss of generality, we suppose that
$\|\phi\|_{L^{\infty}(\mathbb{R})}<\theta^{\diamond}$. So that we
always have $V(u)\leq0$.

Here are our main results concerning the long time behavior of
solutions and their energy.

\begin{thm}
  \label{thmignition}
  Let $f$ satisfy conditions (\ref{ignition}) and (\ref{convex}). Let
  $\phi(x)$ satisfy condition (\ref{condition}) and hypothesis
  (SD). Then one of the following holds.
  \begin{enumerate}
  \item $\displaystyle \lim_{t\rightarrow\infty}u(x,t)=1$ locally
    uniformly in $x \in \mathbb{R}$,
  \item $\displaystyle \lim_{t\rightarrow\infty}u(x,t)=\theta_0$
    locally uniformly in $x \in \mathbb{R}$,
  \item $\displaystyle \lim_{t\rightarrow\infty}u(x,t)=0$ uniformly in
    $x \in \mathbb{R}$.
  \end{enumerate}
\end{thm}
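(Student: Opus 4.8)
The plan is to follow the blueprint of the bistable case (Theorem~\ref{thmbistable}), with the stationary bump $v$ replaced by the constant state $\theta_0$, which now plays the role of the minimal stationary profile sitting at the edge of the degeneracy plateau $[0,\theta_0]$ of $f$. Everything is organized around the sign of the energy: either there is some $T\ge0$ with $E[u(\cdot,T)]<0$, or $E[u(\cdot,t)]\ge0$ for every $t\ge0$ (in which case $E$ is bounded below and, crucially, Proposition~\ref{holder} applies). In the first case I will obtain alternative~1, and in the second case alternatives~2 or 3. Since $V(u)=0$ exactly for $u\in[0,\theta_0]$ (recall $\|\phi\|_{L^\infty}<\theta^\diamond$, so $u<\theta^\diamond$), and $V(1)<0$ automatically here, essentially all the bistable lemmas carry over with $\theta_0$ in place of $\theta^\ast$.

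Suppose first $E[u(\cdot,T)]<0$ for some $T$. Truncating $u(\cdot,T)$ by the cutoffs $\varphi_L$ of Lemma~\ref{test} and using continuity of $E$ in $H^1(\mathbb R)$, I get a compactly supported (hence $H^1_c$ for all $c>0$) symmetric decreasing datum $\hat\phi$ with $E[\hat\phi]<0$; since $V(u)=0$ for $u\le\theta_0$, the argument of Lemma~\ref{wavelike} applies verbatim and the corresponding solution $\hat u$ is wave-like. Proposition~\ref{p:muratov} then gives $R_{\delta_0}(\hat u)(t)\ge ct+R_0$, and because $V\equiv0$ on $[0,\theta_0]$ the threshold $\delta_0$ may be taken strictly above $\theta_0$ exactly as in Lemma~\ref{spread}. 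Fixing $\delta_1\in(\theta_0,\delta_0)$, the constant $\delta_1$ is a strict subsolution of \eqref{stationary}, so $\hat u$ is eventually above the Dirichlet stationary solution $v_L$ on $(-L,L)$ with boundary value $\delta_1$; as $v_L''=-f(v_L)<0$ these $v_L$ are concave and increasing in $L$, so $v_L\uparrow\bar v$ with $\bar v$ a bounded concave (hence constant) solution on $\mathbb R$ lying in $[\delta_1,\theta^\diamond)$, forcing $\bar v\equiv1$. Then $\hat u\to1$ and, by comparison with $u(\cdot,T+\cdot)$, $u\to1$ locally uniformly, which is alternative~1.

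Now assume $E[u(\cdot,t)]\ge0$ for all $t$. Multiplying \eqref{main} by $u_x$ and integrating over $(-\infty,0)$ as in Lemma~\ref{origin} gives $-V(u(0,t))=\int_{-\infty}^0 u_xu_t\,dx$, and since $\int_1^\infty\!\!\int_{\mathbb R}u_t^2\,dx\,dt<\infty$ I select, as in Remark~\ref{r:tnl1}, a sequence $t_n\to\infty$ with $t_{n+1}-t_n\le1$, $\|u_t(\cdot,t_n)\|_{L^2}\to0$ and $V(u(0,t_n))\to0$; by the structure of $V$ this yields $\limsup_n u(0,t_n)\le\theta_0$. If $\liminf_n u(0,t_n)<\theta_0$, then at some finite $t_n$ we have $\max_x u(\cdot,t_n)=u(0,t_n)<\theta_0$; comparing with the stationary solution $\equiv\theta_0$ keeps $u<\theta_0$ thereafter, so $f(u)\equiv0$, $u$ solves the heat equation, and $u\to0$ uniformly (as in Lemma~\ref{tozero}) — alternative~3. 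Otherwise $u(0,t_n)\to\theta_0$, and since the energy is bounded below Proposition~\ref{holder} together with $t_{n+1}-t_n\le1$ promotes this to $u(0,t)\to\theta_0$ as $t\to\infty$; in particular $\max_x u(\cdot,t)\to\theta_0$, so $0\le u(x,t)\le\theta_0+o(1)$ uniformly in $x$.

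The remaining step — upgrading $u(0,t)\to\theta_0$ to $u(x,t)\to\theta_0$ locally uniformly — is the main obstacle, and it is here that the degeneracy of $f$ at the free boundary $\theta_0$, and the convexity hypothesis \eqref{convex} (playing a role analogous to \eqref{eq:fconc}–\eqref{power} in the bistable case), must be handled. I would run the ODE-in-$x$ argument of Lemma~\ref{tobump} with $v$ replaced by the constant $\theta_0$: writing $w_n(x):=u(x,t_n)-\theta_0$, one has $w_n(0)=\alpha_n\to0$, $w_n'(0)=0$, and $w_n''=u_t(\cdot,t_n)-f(u(\cdot,t_n))$, where $|f(u(x,t_n))|\le\mathcal K\,|w_n(x)|$ with $\mathcal K:=\sup_{[\theta_0,\theta_0+\delta']}|f'|$ for all large $n$ (using $f(\theta_0)=0$ and $\max_x u(\cdot,t_n)<\theta_0+\delta'$). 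The same partition-and-iteration estimate as in \eqref{eq:maxwnp} then gives $\max_{|x|\le L}|w_n(x)|\le C(L)\big(\alpha_n+\|u_t(\cdot,t_n)\|_{L^2}\big)\to0$, i.e. $u(\cdot,t_n)\to\theta_0$ locally uniformly, and Proposition~\ref{holder} with $t_{n+1}-t_n\le1$ fills the gaps to give $u(x,t)\to\theta_0$ locally uniformly — alternative~2. The delicate points are as in the bistable analysis: legitimizing the $u_xu_t$ identity and the selection of $\{t_n\}$ via Proposition~\ref{p:exist} and standard parabolic regularity, and verifying that \eqref{convex} is precisely what makes the comparison constructions near $\theta_0$ go through; the expected hard part is this last point, since $f$ may vanish to high order at $\theta_0$.
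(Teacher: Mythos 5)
Your proposal is correct and follows essentially the same route as the paper: negative energy at some time implies wave-like behavior and hence convergence to $1$ via Proposition~\ref{p:muratov} and the Dirichlet-subsolution argument of Lemma~\ref{spread}, while bounded energy yields a sequence $t_n$ with $V(u(0,t_n))\to 0$, after which the ODE-in-$x$ iteration of Lemma~\ref{tobump} (with the constant $\theta_0$ in place of $v$), the heat-equation observation ruling out limits in $(0,\theta_0)$, and Proposition~\ref{holder} give alternatives 2 and 3 exactly as in the paper's Section~\ref{s:ign}. The only misdirected remark is your closing worry about \eqref{convex}: that hypothesis plays no role in the convergence theorem (your argument, like the paper's, never uses it) and is needed only to verify condition \eqref{behavior} in the sharp-threshold result, Theorem~\ref{sharpignition}.
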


\begin{thm}
  \label{theoremignition}
  Under the same assumptions as in Theorem \ref{thmignition}, we have
  the following one-to-one relation.
  \begin{enumerate}
  \item $\displaystyle \lim_{t\rightarrow\infty}u(x,t)=1$ locally uniformly in
    $x \in \mathbb{R}$ ${\Leftrightarrow}$
    $\displaystyle \lim_{t\rightarrow\infty}E[u(\cdot,t)]=-\infty$.
  \item $\displaystyle \lim_{t\rightarrow\infty}u(x,t)=\theta_0$
    locally uniformly in $x \in \mathbb{R}$ or $\displaystyle
    \lim_{t\rightarrow\infty}u(x,t)=0$ uniformly in $x \in \mathbb{R}$
    ${\Leftrightarrow}$ $\displaystyle
    \lim_{t\rightarrow\infty}E[u(\cdot,t)]=0$.
  \end{enumerate}
\end{thm}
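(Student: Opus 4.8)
The plan is to prove Theorems~\ref{thmignition} and~\ref{theoremignition} together, following the template of the monostable case (Lemmas~\ref{l:E0l0}--\ref{l:El0}) and of the Fife-type argument for the bounded-energy regime (Lemmas~\ref{origin}--\ref{tobump}), while carrying along the extra equilibrium $u=\theta_0$. Everything rests on three properties of $V$: $V(u)\le0$ for all $u\ge0$ (by the normalization $\|\phi\|_{L^\infty}<\theta^\diamond$), $V\equiv0$ on $[0,\theta_0]$, and $V<0$ on $(\theta_0,\theta^\diamond)$ with $V(1)<0$. I would first establish the equivalence in part~1. If $E[u(\cdot,T)]<0$ for some $T\ge0$, then truncating $u(\cdot,T)$ by a cutoff as in Lemma~\ref{test} produces a compactly supported $\hat\phi$ with $E[\hat\phi]<0$, hence $\Phi_c[\hat\phi]<0$ for some small $c>0$, so the solution $\hat u$ with datum $\hat\phi$ is wave-like. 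Since $V$ is continuous and $V\equiv0$ on $[0,\theta_0]$, condition \eqref{largepotential} holds on $[0,\delta_0]$ for some $\delta_0>\theta_0$; thus Proposition~\ref{p:muratov} gives that the leading edge of $\hat u$ at level $\delta_0$ exceeds $ct+R_0$. As $\hat u$ is symmetric decreasing, $\hat u\ge\delta_0$ on $[-L,L]$ for all large $t$, and the subsolution argument of Lemma~\ref{spread} applies: the limiting stationary profile of the Dirichlet problems on $(-L,L)$ with boundary value $\delta_0$ lies between $\delta_0$ and $1$ and solves $v''=-f(v)\le0$, hence is concave, bounded, and therefore constant, so it equals $1$; comparison then yields $\hat u\to1$ and $u\to1$ locally uniformly. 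Conversely, $u\to1$ locally uniformly forces $\int_{\mathbb R}V(u)\,dx\to-\infty$ while $\int_{\mathbb R}u_x^2\,dx\le2\|u_x\|_{L^\infty}u(0,t)$ stays bounded by (SD) and parabolic regularity, so $E\to-\infty$. In particular $E[u(\cdot,t)]$ is bounded below if and only if $u\not\to1$.

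When $E$ is bounded below I would reproduce the argument of Lemmas~\ref{origin}--\ref{tobump}. There is $t_n\to\infty$ with $t_{n+1}-t_n\le1$ and $\|u_t(\cdot,t_n)\|_{L^2}\to0$; the identity $\int_{-\infty}^0 u_x u_t\,dx=-V(u(0,t))$ then gives $V(u(0,t_n))\to0$, so $u(0,t_n)$ approaches the zero set $[0,\theta_0]$ of $V$. If $u(0,T)<\theta_0$ for some $T$, then $f(u)\equiv0$ for as long as $u<\theta_0$, this is preserved by the maximum principle, and $u$ decays uniformly to $0$ by the $L^2$--$L^\infty$ smoothing estimate for the heat semigroup (cf.\ Lemma~\ref{tozero}). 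Otherwise $u(0,t)\ge\theta_0$ for all $t$, hence $u(0,t_n)\to\theta_0$, and the ODE-in-$x$ continuous-dependence argument of Lemma~\ref{tobump} with $v\equiv\theta_0$ gives $u(\cdot,t_n)\to\theta_0$ locally uniformly; Proposition~\ref{holder} together with $t_{n+1}-t_n\le1$ then upgrades this to $u(\cdot,t)\to\theta_0$ locally uniformly. This proves Theorem~\ref{thmignition}. For the energy: if $u\to0$ uniformly, then eventually $V(u)\equiv0$ and $\tfrac12\int u_x^2\le\|u_x\|_{L^\infty}u(0,t)\to0$, so $E\to0$; and the $\Leftarrow$ direction of part~2 is then immediate, since $E\to0$ forces $E$ to be bounded below.

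The one remaining and most delicate implication is $u\to\theta_0$ locally uniformly $\Rightarrow E\to0$. The gradient term $\int_0^\infty u_x^2\,dx$ I would split as $\int_0^L+\int_L^\infty$: the first piece is at most $\|u_x\|_{L^\infty}\bigl(u(0,t)-u(L,t)\bigr)\to0$ by local uniform convergence, while on the tail, where $u\le\theta_0$, the solution solves the heat equation with bounded boundary data and its gradient decays, so $\int_L^\infty u_x^2\,dx\to0$. The hard part is the potential term: since $-V(u)\le F(u-\theta_0)_+$ with $F=\max_{[\theta_0,1]}|f|$, one needs $\|(u(\cdot,t)-\theta_0)_+\|_{L^1}\to0$, i.e.\ control of the measure of the ignited region $\{x:u(x,t)>\theta_0\}$. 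The plan is to show this measure stays bounded by some $Z$ for $t$ large: if not, a wide plateau of $u$ above $\theta_0$ supplies a compactly supported subsolution with negative energy, which by the first step forces $u\to1$, contradicting $u\to\theta_0$. I expect this to be the main obstacle, and it is here that the convexity hypothesis \eqref{convex} should be used --- to pin down the profile of $u$ near the level $\theta_0$ (equivalently, to estimate the singular integral $\int ds/\sqrt{2\,(V(s)-V(u(0,t)))}$ that governs the plateau width) and thereby guarantee that a growing ignited region carries enough negative potential energy to trigger propagation. Once the measure of $\{u(\cdot,t)>\theta_0\}$ is at most $Z$ for $t$ large, $-\int_{\mathbb R}V(u)\,dx\le FZ\,(u(0,t)-\theta_0)\to0$, which completes the proof.
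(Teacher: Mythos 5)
Your treatment of part~1 and of the dichotomy under bounded energy tracks the paper's proof closely (cutoff $\to$ wave-like $\to$ Proposition~\ref{p:muratov} $\to$ subsolution for the forward direction; the sequence $t_n$ with $\|u_t(\cdot,t_n)\|_{L^2}\to0$, the identity $\int_{-\infty}^0u_xu_t\,dx=-V(u(0,t))$, the ODE-in-$x$ argument and Proposition~\ref{holder} for the trichotomy), and those parts are fine. The genuine gap is in the implication you yourself flag as the main obstacle: $u\to\theta_0$ locally uniformly $\Rightarrow E\to0$. Your plan is to control the potential term by bounding the measure of the ignited region $\{u>\theta_0\}$, arguing that an unboundedly wide plateau above $\theta_0$ would produce a compactly supported datum with negative energy. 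This does not work as stated: since $u(0,t)\to\theta_0$ and $f(\theta_0)=0$, the height of the plateau is $\theta_0+\epsilon(t)$ with $\epsilon(t)\to0$ and $|V(\theta_0+\epsilon)|=O(\epsilon^2)$, so a plateau of width $W(t)\to\infty$ contributes potential energy only of order $-W(t)\,\epsilon(t)^2$, which need not dominate the $O(1)$ cost of the truncation; no contradiction is forced. The convexity hypothesis \eqref{convex} will not rescue this --- in the paper it is used only for the sharp-threshold result (Theorem~\ref{sharpignition}, via Lemma~\ref{modify}), not for the energy identification. Your tail estimate $\int_L^\infty u_x^2\,dx\to0$ (``the gradient decays'' for the heat equation beyond the moving edge) is also asserted rather than proved.

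The paper's route avoids estimating $\int_{\mathbb R}V(u)\,dx$ altogether. Because of the normalization $\|\phi\|_{L^\infty}<\theta^\diamond$ one has $V(u)\le0$ pointwise, hence $E[u(\cdot,t)]\le\tfrac12\int_{\mathbb R}u_x^2\,dx$; and $E[u(\cdot,t)]\ge0$ for all $t$, since $E<0$ at any time would force $u\to1$ by your own first step. So it suffices to show $\int_{\mathbb R}u_x^2\,dx\to0$ along the sequence $t_n$ and invoke monotonicity of $E$. For $|x|\le R_{\theta_0}(t)$ this follows from locally uniform convergence to the constant $\theta_0$ and parabolic regularity; for the tail, multiply \eqref{main} by $u_x$ and integrate over $(R_\theta(t),\infty)$ for $\theta\in(0,\theta_0]$, where $f(u)=0$, to get $\tfrac12u_x^2(R_\theta(t),t)\le\bigl(\theta_0\,\|u_x(\cdot,t)\|_{L^\infty}\int_{\mathbb R}u_t^2\,dx\bigr)^{1/2}$, which vanishes along $\{t_n\}$ uniformly in $\theta$ and hence gives $\sup_{x>R_{\theta_0}(t_n)}|u_x(x,t_n)|\to0$. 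You should replace your measure-of-the-ignited-region argument with this sandwich; as written, that step of your proof is not salvageable.
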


We prove the above theorems via a sequence of lemmas.

\begin{lemma}
  \label{l:Enegign}
  Suppose that there exists $T\geq0$ such that $E[u(\cdot,T)]<0$, then
  $\displaystyle \lim_{t\rightarrow\infty}u(x,t)=1$ locally uniformly in
  $\mathbb{R}$.
\end{lemma}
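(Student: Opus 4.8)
The plan is to follow the route already used for the monostable case (Lemma~\ref{l:El0}): a cutoff to reach exponentially weighted spaces, the leading-edge estimate of Proposition~\ref{p:muratov}, and a Lemma~\ref{spread}-type subsolution argument. The ignition structure ($f\equiv0$ on $[0,\theta_0]$, $f>0$ on $(\theta_0,1)$, $V\le0$) makes verifying the hypothesis of Proposition~\ref{p:muratov} essentially automatic, so I expect no serious obstacle.

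First I would reduce to wave-like, compactly supported data. Since $\phi$ satisfies (SD), $u(\cdot,T)$ is symmetric decreasing, and since $E$ is continuous on $H^1(\mathbb R)$, cutting off $u(\cdot,T)$ with the functions $\varphi_L(x)=\eta(|x|/L)$ from Lemma~\ref{test} produces $\hat\phi_L:=\varphi_L\,u(\cdot,T)\to u(\cdot,T)$ in $H^1(\mathbb R)$, so $E[\hat\phi_L]<0$ for $L$ large. As $\hat\phi_L$ is compactly supported it belongs to every $H^1_c(\mathbb R)$, and $\Phi_c[\hat\phi_L]\to E[\hat\phi_L]<0$ as $c\to0^+$; hence $\Phi_c[\hat\phi_L]<0$ for some small $c>0$, i.e. the solution $\hat u$ of \eqref{main} with $\hat u(\cdot,0)=\hat\phi_L$ is wave-like (the exact analogue of Lemma~\ref{wavelikemonostable}). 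By the comparison principle $u(x,t+T)\ge\hat u(x,t)$, so it suffices to prove $\hat u(x,t)\to1$ locally uniformly; note $\hat u$ is again symmetric decreasing.

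Next I would apply Proposition~\ref{p:muratov} to $\hat u$. Since $V\equiv0$ on $[0,\theta_0]$, estimate \eqref{largepotential} holds there for free, and by continuity of $V$ with $V(\theta_0)=0$ it persists on some $[0,\delta_0]$ with $\theta_0<\delta_0<1$, for the $c$ fixed above. Proposition~\ref{p:muratov} then gives $R_\delta(t)\ge ct+R_0$ for all $\delta\in(0,\delta_0]$ and $t\ge T$, so, by symmetric monotonicity, $\hat u(x,t)\ge\delta_0$ whenever $|x|\le ct+R_0$; hence for every $L>0$ there is $T_L$ with $\hat u(\cdot,t)\ge\delta_0$ on $[-L,L]$ for all $t\ge T_L$. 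Because $\theta_0<\delta_0<1$ we have $f(\delta_0)>0$, so the constant $\delta_0$ is a strict stationary subsolution; solving \eqref{main} on $(-L,L)$ with Dirichlet data $\delta_0$ and initial datum $\delta_0$ gives, as in Lemma~\ref{spread}, a monotone increasing flow converging to the solution $v_L$ of \eqref{stationary} with $v_L(\pm L)=\delta_0$, and by comparison $v_L\le\liminf_{t\to\infty}\hat u(\cdot,t)$ on $[-L,L]$. Since $\delta_0\le v_L\le1$ (the constant $1$ is a supersolution), $v_L''=-f(v_L)\le0$, so $v_L$ is concave; by elliptic estimates $v_L\to\bar v$ locally uniformly as $L\to\infty$, with $\bar v$ a bounded concave solution of \eqref{stationary} on $\mathbb R$, hence constant, hence $\bar v\equiv1$ (the only zero of $f$ in $(\theta_0,\theta^\diamond)$). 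Letting $L\to\infty$ yields $\liminf_{t\to\infty}\hat u(x,t)\ge1$ locally uniformly, while $\limsup_{t\to\infty}\|\hat u(\cdot,t)\|_{L^\infty(\mathbb R)}\le1$ by \eqref{condition}, and the lemma follows.

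The only two points needing a moment's care are ensuring $\delta_0>\theta_0$ in \eqref{largepotential} (so that the comparison subsolution sits in the region where $f>0$, which is exactly where the ignition assumption helps), and ruling out that the limiting stationary profile $\bar v$ is a small-amplitude periodic orbit around $u=1$ rather than the constant $1$ — which the concavity (equivalently, symmetric-decreasing) property of the $v_L$ takes care of.
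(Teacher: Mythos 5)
Your proposal is correct and follows essentially the same route as the paper's proof: cut off $u(\cdot,T)$ to obtain a wave-like, compactly supported comparison solution, apply Proposition \ref{p:muratov} with a $\delta_0>\theta_0$ (possible because $V\equiv 0$ on $[0,\theta_0]$), and then run the Lemma \ref{spread}-type subsolution argument, where the only admissible limiting stationary profile above $\delta_0$ is the constant $1$. The extra details you supply (the $\Phi_c\to E$ limit as $c\to 0^+$ and the concavity argument identifying $\bar v\equiv 1$) are consistent with, and correctly flesh out, the steps the paper leaves implicit.
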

\begin{proof}
  The arguments follow those in the proof of Lemma \ref{test}. If
  $E[u(\cdot,T)]<0$ for some $T\geq0$, then there exists a
  sufficiently small $c > 0$ such that $\Phi_c[\varphi_L
  u(\cdot,T)]<0$ for large enough $L > 0$, where the cutoff function
  $\varphi_L$ is as in Lemma \ref{test}. And by the condition
  (\ref{ignition}), there exists $\delta_0>\theta_0$, such that
  condition (\ref{largepotential}) holds. Then from Proposition
  \ref{p:muratov}, we know that $R_{\delta_0} (t) >ct+R_0$ for some $R_0
  \in \mathbb R$. Similarly to Lemma \ref{spread}, since the unique
  solution of equation (\ref{stationary}) larger than $\delta_0$
  is $1$ in the whole of $\mathbb R$, we conclude that
  $\displaystyle \lim_{t\rightarrow\infty}u(x,t)=1$ locally uniformly in
  $\mathbb{R}$.
\end{proof}

\begin{lemma}
  Suppose that $E[u(\cdot,t)]$ is bounded from below in $t$. Then
  either $\displaystyle \lim_{t\rightarrow\infty}u(x,t)=0$ uniformly
  in $\mathbb{R}$, or $\displaystyle
  \lim_{t\rightarrow\infty}u(x,t)=\theta_0$ locally uniformly in
  $\mathbb{R}$.
\end{lemma}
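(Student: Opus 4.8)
The plan is to adapt the argument from the bistable case (Lemmas \ref{origin}, \ref{tozero} and \ref{tobump}) to the fact that here $V$ vanishes on the whole interval $[0,\theta_0]$. Since $u(x,t)$ is symmetric decreasing, $u(0,t)=\max_{x\in\mathbb R}u(x,t)$, and I would organize the dichotomy according to whether or not $\liminf_{t\to\infty}u(0,t)<\theta_0$.

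First I would dispose of the extinction branch. If $u(0,T)<\theta_0$ for some $T\ge0$, then by monotonicity $u(x,T)\in[0,\theta_0)$ for all $x$; since $f\equiv0$ on $[0,\theta_0]$, the solution $\bar u$ of the heat equation with datum $u(\cdot,T)$ stays in $[0,u(0,T)]\subset[0,\theta_0)$ by the maximum principle, hence also solves \eqref{main}, and uniqueness in Proposition \ref{p:exist} gives $u\equiv\bar u$ for $t\ge T$; the standard heat-semigroup estimate $\|u(\cdot,t)\|_{L^\infty(\mathbb R)}\le C(t-T)^{-1/4}\|u(\cdot,T)\|_{L^2(\mathbb R)}$ then yields $u\to0$ uniformly. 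Since the existence of such a $T$ is equivalent to $\liminf_{t\to\infty}u(0,t)<\theta_0$ (the converse being immediate from the decay just obtained), one alternative is settled, and it remains to treat the case $u(0,t)\ge\theta_0$ for all $t\ge0$.

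In that case I would follow Lemma \ref{origin}. From $E[u(\cdot,t)]$ bounded below and \eqref{dEdt} we get $\int_1^\infty\int_{\mathbb R}u_t^2\,dx\,dt<\infty$, hence a sequence $t_n\to\infty$ with $t_{n+1}-t_n\le1$ and $\|u_t(\cdot,t_n)\|_{L^2(\mathbb R)}\to0$ (cf.\ Remark \ref{r:tnl1}). Multiplying \eqref{main} by $u_x$ and integrating over $(-\infty,0)$ exactly as in \eqref{intleft}, using monotonicity of $u$ and standard parabolic regularity, one finds $|V(u(0,t_n))|\le C\|u_t(\cdot,t_n)\|_{L^2(\mathbb R)}\to0$; since $\limsup_n u(0,t_n)\le1<\theta^\diamond$ and the only zeros of $V$ below $\theta^\diamond$ lie in $[0,\theta_0]$, this forces $\limsup_n u(0,t_n)\le\theta_0$, while $u(0,t_n)\ge\theta_0$ by assumption, so $\alpha_n:=u(0,t_n)-\theta_0\to0$ with $\alpha_n\ge0$.

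It then remains to upgrade this to $u(x,t)\to\theta_0$ locally uniformly. Writing $w_n(x):=u(x,t_n)-\theta_0$, one has $w_n'(0)=0$, $w_n(0)=\alpha_n$ and $w_n''=u_t(\cdot,t_n)-f(u(\cdot,t_n))$; on $[0,L]$ the bound $u(x,t_n)\in[0,\theta_0+\alpha_n]$ gives $|f(u(x,t_n))|\le M_n:=\sup_{[\theta_0,\theta_0+\alpha_n]}|f|\to0$ (because $f(\theta_0)=0$), and then two integrations with Cauchy--Schwarz, as in the proof of Lemma \ref{tobump}, yield $\max_{|x|\le L}|w_n(x)|\le\alpha_n+CL^{3/2}\bigl(\|u_t(\cdot,t_n)\|_{L^2(\mathbb R)}+M_nL^{1/2}\bigr)\to0$, i.e.\ $u(\cdot,t_n)\to\theta_0$ locally uniformly. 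Since $E[u(\cdot,t)]$ is bounded below, Proposition \ref{holder} shows the H\"older-in-$t$ constant of $u(x,\cdot)$ on $[t_n,\infty)$ tends to $0$ uniformly in $x$, so for $t\in[t_n,t_{n+1}]$ and $|x|\le L$ we obtain $|u(x,t)-\theta_0|\le|u(x,t_n)-\theta_0|+|u(x,t)-u(x,t_n)|\to0$, which is the full limit. The main obstacle I anticipate lies in this case: one must place the dichotomy exactly at $\liminf_{t\to\infty}u(0,t)$ so that the extinction branch is cleanly isolated, and then verify that the continuum $[0,\theta_0]$ of zeros of $V$ does not spoil the ODE estimate — which works precisely because $f$ vanishes on $[0,\theta_0]$, making $|f|$ uniformly small on $[0,\theta_0+\alpha_n]$; note that the convexity hypothesis \eqref{convex} is not needed for this lemma.
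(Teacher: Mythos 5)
Your proof is correct and rests on the same core machinery as the paper's (the energy-dissipation sequence $\{t_n\}$ with $\|u_t(\cdot,t_n)\|_{L^2(\mathbb R)}\to 0$, the identity \eqref{intleft} forcing $V(u(0,t_n))\to 0$, an ODE-in-$x$ estimate for the locally uniform upgrade, and Proposition \ref{holder} for the full limit), but it is organized differently at two points. First, the case analysis: the paper extracts a limit $u(0,t_n)\to\alpha\in[0,\theta_0]$, proves $u(\cdot,t_n)\to\alpha$ locally uniformly, and only then excludes $\alpha\in(0,\theta_0)$ by contradiction --- once $u(0,T)<\theta_0$ the equation reduces to the heat equation, whose non-increasing $L^2$ norm is incompatible with locally uniform convergence to a positive constant. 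You instead split at the outset on whether $u(0,t)$ ever drops below $\theta_0$, dispose of the extinction branch immediately via the heat-kernel $L^\infty$--$L^2$ decay (which also makes the uniform convergence to $0$ explicit, where the paper leans implicitly on the argument of Lemma \ref{tozero}), and in the remaining branch $V(u(0,t_n))\to 0$ together with $u(0,t_n)\ge\theta_0$ pins the limit to $\theta_0$ with no contradiction argument needed. Second, your ODE step is more direct than the iteration in Lemma \ref{tobump}: since $0\le u(\cdot,t_n)\le\theta_0+\alpha_n$ and $f\equiv 0$ on $[0,\theta_0]$, you bound $|f(u(\cdot,t_n))|$ by $M_n\to 0$ in $L^\infty$ outright, avoiding the Gronwall-type feedback through $f'(\tilde u)w$. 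Both routes are valid; yours buys a leaner set of estimates at the cost of the paper's cleaner ``identify the limit, then exclude the interior'' structure. Your remark that \eqref{convex} is not needed for this lemma is also consistent with the paper, which invokes convexity only in the sharp-threshold argument of Theorem \ref{sharpignition}.
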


\begin{proof}
  Same as in Lemma \ref{origin}, there exists an unbounded
    increasing sequence $\{t_n \}$ such that
\begin{equation}
  \lim_{n\rightarrow\infty}u(0,t_n) = \alpha,
\end{equation}
for some $\alpha\in[0,\theta_0]$. And in the spirit of Lemma
\ref{tobump} we have
\begin{equation}
  \lim_{n\rightarrow\infty}u(x,t_n)\equiv\alpha,
\end{equation}
locally uniformly in $x \in \mathbb R$. We need to prove that $\alpha$
is either $0$ or $\theta_0$. We argue by contradiction. Assume that
$0<\alpha<\theta_0$, then there exists $T \geq 0$ sufficiently large
such that $u(0,T)<\theta_0$. And for any $t>T$, $u(x,t)\equiv\theta_0$
is a supersolution of (\ref{main}), so that
$0\leq{u(x,t)}\leq\theta_0$ uniformly in $\mathbb{R}$. From the
definition of $f(u)$, it then implies that equation (\ref{main})
becomes
\begin{equation}
  u_t(x,t)=u_{xx}(x,t),
\end{equation}
for any $t>T$. However, the $L^2$ norm of the solution of the heat
equation is non-increasing, contradicting the assumption that $u(x,t)$
converges to $\alpha$ locally uniformly. So either $\alpha = 0$ or
  $\alpha = \theta_0$, which proves the lemma.
\end{proof}

\begin{cor}
  Suppose that $\displaystyle \lim_{t\rightarrow\infty}u(x,t)=1$
  locally uniformly in $\mathbb{R}$, then $\displaystyle
  \lim_{t\rightarrow\infty}E[u(\cdot,t)]=-\infty$.
\end{cor}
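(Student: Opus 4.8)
The plan is to adapt the argument from the first part of Lemma~\ref{l:E0l0}: the only new input needed is that, for the ignition nonlinearity, $V(u)\le 0$ throughout the relevant range of values. This holds because $\limsup_{t\rightarrow\infty}\|u(\cdot,t)\|_{L^\infty(\mathbb R)}\le 1$ and we have assumed without loss of generality that $\|\phi\|_{L^\infty(\mathbb R)}<\theta^\diamond$, so that $0\le u(x,t)\le\theta^\diamond$ and hence $V(u(x,t))\le 0$ for all $x\in\mathbb R$ and $t>0$; moreover $V(1)=-\int_0^1 f(s)\,ds<0$ by \eqref{ignition}.

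First I would bound the gradient part of $E$. Under hypothesis (SD) the solution $u(\cdot,t)$ is symmetric decreasing for every $t>0$, so $u_x\le 0$ on $(0,\infty)$, and $u(x,t)\to 0$ as $x\to\infty$ since $u(\cdot,t)\in H^1(\mathbb R)$ by Proposition~\ref{p:exist}. Therefore
\[
\int_{\mathbb R}\tfrac12 u_x^2(x,t)\,dx=\int_0^\infty u_x^2(x,t)\,dx\le\|u_x(\cdot,t)\|_{L^\infty(\mathbb R)}\int_0^\infty|u_x(x,t)|\,dx=\|u_x(\cdot,t)\|_{L^\infty(\mathbb R)}\,u(0,t),
\]
which is bounded by a constant $C$ independent of $t\ge 1$, using standard parabolic regularity for $\|u_x(\cdot,t)\|_{L^\infty(\mathbb R)}$ and the universal bound $u(0,t)\le\max\{1,\|\phi\|_{L^\infty(\mathbb R)}\}$.

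Next I would show that the potential part of $E$ diverges to $-\infty$. Fix $M>0$. By continuity of $V$ and $V(1)<0$ there is $\varepsilon>0$ with $V(s)\le\tfrac12 V(1)$ whenever $|s-1|\le\varepsilon$. Pick $L>0$ so large that $L\,|V(1)|>M+C$. Since $u(x,t)\to 1$ locally uniformly, there is $T_L\ge 1$ such that $|u(x,t)-1|\le\varepsilon$ for all $|x|\le L$ and all $t\ge T_L$; combining this with $V(u)\le 0$ everywhere yields, for $t\ge T_L$,
\[
\int_{\mathbb R}V(u(x,t))\,dx\le\int_{-L}^{L}V(u(x,t))\,dx\le 2L\cdot\tfrac12 V(1)=-L\,|V(1)|<-(M+C).
\]
Hence $E[u(\cdot,t)]\le C-(M+C)=-M$ for all $t\ge T_L$, and since $M>0$ was arbitrary, $\lim_{t\rightarrow\infty}E[u(\cdot,t)]=-\infty$.

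I do not expect any genuine obstacle: the argument is essentially the monostable one. The single point worth flagging is that hypothesis (SD) is used in an essential way to control the gradient term — it forces $\int_0^\infty u_x^2\,dx$ to be dominated by the single boundary value $u(0,t)$ together with the uniform gradient bound, ruling out any unbounded contribution from a steepening front or from far-field oscillations. The convexity assumption \eqref{convex} plays no role here.
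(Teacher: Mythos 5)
Your proof is correct, but it takes a genuinely different route from the paper's. The paper states this corollary without a separate argument because it is an immediate consequence of the preceding lemma together with the monotonicity of the energy: by \eqref{dEdt} the map $t\mapsto E[u(\cdot,t)]$ is non-increasing, so either it tends to $-\infty$ or it is bounded from below, and in the latter case that lemma forces $u$ to converge to $0$ uniformly or to $\theta_0$ locally uniformly, which is incompatible with $u\to 1$ locally uniformly. Your argument instead adapts the direct computation that the paper carries out for the monostable case in Lemma \ref{l:E0l0}: under (SD) the gradient term is bounded by $\|u_x(\cdot,t)\|_{L^\infty(\mathbb R)}\,u(0,t)$, hence uniformly in $t\ge 1$, while the potential term is driven to $-\infty$ because $V\le 0$ on the range of $u$ (after the normalization $\|\phi\|_{L^\infty(\mathbb R)}<\theta^\diamond$), $V(1)<0$ for the ignition nonlinearity, and $u$ is uniformly close to $1$ on intervals $[-L,L]$ of arbitrarily large length for large $t$. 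Both arguments are sound. What your direct route buys is self-containedness: it does not rely on the classification lemma (and hence not on its heat-equation argument, nor on the convexity hypothesis \eqref{convex}, as you correctly flag), and it gives an explicit rate-free quantitative mechanism for the divergence. What the paper's route buys is brevity, since the classification lemma must be proved anyway for Theorems \ref{thmignition} and \ref{theoremignition}. Your use of (SD) to reduce $\int_0^\infty u_x^2\,dx$ to the single boundary value $u(0,t)$ is exactly the step the paper uses in the monostable analogue, and your verification of $V(1)<0$ from \eqref{ignition} is the only genuinely new input needed.
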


\begin{lemma}
  Both $\displaystyle \lim_{t\rightarrow\infty}u(x,t)=0$ uniformly in
  $\mathbb{R}$ and $\displaystyle
  \lim_{t\rightarrow\infty}u(x,t)=\theta_0$ locally uniformly in
  $\mathbb{R}$ imply $\displaystyle
  \lim_{t\rightarrow\infty}E[u(\cdot,t)]=0$.
\end{lemma}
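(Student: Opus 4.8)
The plan is to prove both implications at once, using only that $V(u)\le 0$ throughout (which holds here by the normalization $\|\phi\|_{L^\infty(\mathbb R)}<\theta^\diamond$) together with hypothesis (SD). First I would note that $E[u(\cdot,t)]$ is non-increasing by \eqref{dEdt}, so $E_\infty:=\lim_{t\to\infty}E[u(\cdot,t)]$ exists in $[-\infty,\infty)$, and I claim $E_\infty\ge 0$: if $E_\infty<0$ then $E[u(\cdot,T)]<0$ for some $T$, and Lemma~\ref{l:Enegign} would force $u(x,t)\to 1$ locally uniformly, incompatible with either hypothesis. Hence $E_\infty\in[0,\infty)$, and \eqref{dEdt} gives $\int_1^\infty\int_{\mathbb R}u_t^2\,dx\,dt=E[u(\cdot,1)]-E_\infty<\infty$, so there is a sequence $t_n\to\infty$ with $\|u_t(\cdot,t_n)\|_{L^2(\mathbb R)}\to 0$.

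The core step is a pointwise bound on $u_x$. For $t\ge 1$ and any $a\ge 0$, multiply \eqref{main} by $u_x$ and integrate over $(a,\infty)$; using $u_x(0,t)=0$ from (SD) and the fact that $u(\cdot,t),u_x(\cdot,t)\in H^1(\mathbb R)\hookrightarrow C_0(\mathbb R)$ vanish at $+\infty$, one obtains
\[
\tfrac12 u_x^2(a,t)=V(u(a,t))-\int_a^\infty u_t(x,t)\,u_x(x,t)\,dx\le \|u_t(\cdot,t)\|_{L^2(\mathbb R)}\,\|u_x(\cdot,t)\|_{L^2(\mathbb R)},
\]
where the inequality uses $V(u(a,t))\le 0$ and Cauchy--Schwarz. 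By (SD) and standard parabolic regularity, $\|u_x(\cdot,t)\|_{L^2(\mathbb R)}^2=2\int_0^\infty u_x^2\,dx\le 2\|u_x(\cdot,t)\|_{L^\infty(\mathbb R)}\,u(0,t)$ is bounded uniformly for $t\ge 1$, so taking the supremum over $a\ge 0$ and using that $u_x(\cdot,t)$ is odd gives $\|u_x(\cdot,t)\|_{L^\infty(\mathbb R)}^2\le C\,\|u_t(\cdot,t)\|_{L^2(\mathbb R)}$ for a constant $C$ independent of $t\ge 1$. In particular $\|u_x(\cdot,t_n)\|_{L^\infty(\mathbb R)}\to 0$.

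Finally, since $V(u)\le 0$,
\[
E[u(\cdot,t_n)]\le\int_{\mathbb R}\tfrac12 u_x^2(x,t_n)\,dx=\int_0^\infty u_x^2(x,t_n)\,dx\le \|u_x(\cdot,t_n)\|_{L^\infty(\mathbb R)}\,u(0,t_n)\longrightarrow 0,
\]
because $u(0,t_n)\le\max\{1,\|\phi\|_{L^\infty(\mathbb R)}\}$. Hence $E_\infty=\lim_n E[u(\cdot,t_n)]\le 0$, which together with $E_\infty\ge 0$ yields $\lim_{t\to\infty}E[u(\cdot,t)]=0$. The delicate point is the identity/inequality for $\tfrac12 u_x^2(a,t)$ above: one must justify that the boundary contributions at $x=+\infty$ vanish and recognize that the sign condition $V\le 0$ is precisely what turns it into a usable bound. (In the extinction case one can bypass the sequence $\{t_n\}$ entirely, since then $u(0,t)\to 0$ and $V(u(\cdot,t))\equiv 0$ for large $t$, so $E[u(\cdot,t)]=\int_0^\infty u_x^2\,dx\le \|u_x(\cdot,t)\|_{L^\infty(\mathbb R)}\,u(0,t)\to 0$ directly.)
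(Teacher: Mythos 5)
Your proof is correct, and in the key technical step it takes a genuinely different (and arguably cleaner) route than the paper. Both arguments share the same frame: $E_\infty\ge 0$ via Lemma \ref{l:Enegign}, the bound $E[u(\cdot,t)]\le\int_{\mathbb R}\tfrac12 u_x^2\,dx\le\|u_x(\cdot,t)\|_{L^\infty(\mathbb R)}\,u(0,t)$ from $V\le 0$ and (SD), and the extinction case disposed of immediately. The difference lies in how one shows $\|u_x(\cdot,t_n)\|_{L^\infty(\mathbb R)}\to 0$ in the $\theta_0$ case. The paper splits the line at the leading edge: on $|x|\le R_{\theta_0}(t)$ it uses locally uniform convergence to the constant $\theta_0$ together with standard parabolic regularity, while on $x>R_\theta(t)$ it multiplies by $u_x$ and integrates from $R_\theta(t)$ to $\infty$, where the reaction term drops out because $f\equiv 0$ on $[0,\theta_0]$, and then lets $\theta$ range over $(0,\theta_0]$. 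You instead integrate the same identity from an arbitrary $a\ge 0$, retain the boundary term $V(u(a,t))$, and discard it by the sign condition $V\le 0$; this yields the uniform bound $\tfrac12 u_x^2(a,t)\le\|u_t(\cdot,t)\|_{L^2}\|u_x(\cdot,t)\|_{L^2}$ for all $a\ge 0$ at once, so no decomposition at the leading edge and no inner-region regularity argument are needed. What your version buys is uniformity and economy (it does not use $f\equiv 0$ below $\theta_0$ at all, only $V\le 0$ and (SD), so it would transfer verbatim to the monostable setting); what the paper's version buys is that it isolates exactly where the ignition structure enters. Your justification of the boundary terms at $+\infty$ via $u(\cdot,t)\in H^2(\mathbb R)$ from Proposition \ref{p:exist} is the right one, and the passage from the sequential limit $E[u(\cdot,t_n)]\to 0$ to the full limit via monotonicity of $E$ is exactly as in the paper.
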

\begin{proof}
  By Lemma \ref{l:Enegign}, $E[u(\cdot,t)] \geq 0$ for these
  behaviors. And since $V(u) \leq 0$ for any $u$, we have
\begin{equation}
  \label{gradient}
  E[u(\cdot,t)]\leq\int_{\mathbb{R}}\frac{1}{2}u^2_x(x,t)dx.
\end{equation}
So we only need to prove that the right-hand side of (\ref{gradient})
converges to $0$ as $t\rightarrow\infty$. From (SD), we have
\begin{equation}
  \int_{\mathbb{R}}\frac{1}{2}u^2_x(x,t)dx=\int_{0}^{\infty}u^2_x(x,t)dx
  \leq  \| u_x(\cdot,t) \|_{L^\infty(\mathbb R)} u(0,t).
\end{equation}
We are done if $\displaystyle \lim_{t\rightarrow\infty}u(x,t)=0$
uniformly in $\mathbb{R}$, because $\| u_x(\cdot,t)
\|_{L^\infty(\mathbb R)} $ is bounded by standard parabolic
regularity. So we only need to prove that $\| u_x(\cdot,t)
\|_{L^\infty(\mathbb R)} \to 0$ as $t \to \infty$ for the case
$\displaystyle \lim_{t\rightarrow\infty}u(x,t)=\theta_0$ locally
uniformly in $\mathbb{R}$. We know that $|u_{xx}(x,t)|$ is uniformly
bounded for all $x \in \mathbb R$ and all $t \geq 1$. Using the
convergence result $\displaystyle
\lim_{t\rightarrow\infty}u(0,t)=\theta_0$, by standard parabolic
regularity we also know that
\begin{equation}
  \lim_{t\rightarrow\infty}\sup_{|x|\leq{R_{\theta_0}(t)}}|u_x(x,t)|=0.
\end{equation}
Multiplying (\ref{main}) by $u_x$ and integrating from the leading
edge $R_{\delta}(t)$ to $\infty$, which is justified by
  Proposition \ref{p:exist}, for any $\theta \in (0, \theta_0]$, we
have
\begin{equation}
  \int^{\infty}_{R_{\theta}(t)}u_x(x,t)u_t(x,t)dx=
  \int^{\infty}_{R_{\theta}(t)}u_x(x,t)u_{xx}(x,t)dx,
\end{equation}
since $f(u)=0$ for any $u\in[0,\theta_0]$. Integrating by part and
applying Cauchy-Schwarz inequality, we obtain
\begin{eqnarray}
  \frac{1}{2}u^2_x(R_{\theta}(t),t)& \leq &
  \left( \int^{\infty}_{R_{\theta}(t)}u^2_x(x,t)dx\int^{\infty}_{R_{\theta}(t)
    }u^2_t(x,t)dx \right)^{\frac{1}{2}}\nonumber\\
  & \leq & \left(
    {\theta}\max_{x{\in}\mathbb{R}}|u_x(x,t)|\int^{\infty}_{R_{\theta}(t)
    }u^2_t(x,t)dx \right)^{\frac{1}{2}}\nonumber\\
  & \leq &\left(
    {\theta_0}\max_{x{\in}\mathbb{R}}|u_x(x,t)|\int_{\mathbb{R}}
    u^2_t(x,t)dx \right)^{\frac{1}{2}}.
\end{eqnarray}
Since $E[u(\cdot,t)]$ is bounded from below in $t$, there exists an
increasing sequence $\{t_n\}$ such that
$\displaystyle \lim_{n\rightarrow\infty}t_n=\infty$
and
\begin{equation}
  \lim_{n\rightarrow\infty}\int_{\mathbb{R}}u^2_t(x,t_n)dx=0.
\end{equation}
In turn, since $\theta$ is arbitrary in $(0,\theta_0]$, we have
\begin{equation}
  \lim_{n\rightarrow\infty}\sup_{x>R_{\theta_0}(t_n)}|u_x(x,t_n)|=0.
\end{equation}
This means that the right-hand side of (\ref{gradient}) converges to
$0$ on sequence $\{t_n\}$. The statement of the lemma then follows,
since $E[u(\cdot,t)]$ is non-increasing.
\end{proof}

We have now proved our convergence and equivalence theorems for the
ignition nonlinearity.  Studying the threshold phenomena for ignition
nonlinearity is a little different from the situation with bistable
nonlinearity. The main difficulty is to show that the threshold set
contains only a single point, since we cannot construct the type of
barrier used in the proof of Theorem \ref{sharpbistable}. Instead we
modify the proof by Zlato\v{s} in \cite{Z2006}, which uses a rescaling
technique for dealing only with the initial condition in the form of a
characteristic function.

\begin{lemma}
  \label{modify}
  Let $f:[0,\infty)\rightarrow\mathbb{R}$ be a Lipschitz function with
  $f(0)=0$. Let
  $U(x,t):\mathbb{R}\times[0,\infty)\rightarrow[0,\infty)$ be a
  classical solution of
\begin{equation}
  \label{maineq}
  U_t=U_{xx}+f(U),
\end{equation}
which is uniformly continuous up to $t = 0$. Denote by $U_1(x,t)$ and
$U_2(x,t)$ the solutions of equation (\ref{maineq}) with initial
conditions $U_1(x,0)$ and $U_2(x,0)$, respectively, and assume
$0\leq{U_1(x,0)}\leq{U_2(x,0)}$ for any $x\in\mathbb{R}$, and
$U_1(x_0,0)<U_2(x_0,0)$ for some $x_0\in\mathbb{R}$. Assume also that
for any $\rho>0$ the set
$\Omega_{0,\rho}=\{x\in\mathbb{R}:U_2(x,0)\geq\rho\}$ is
compact. Finally, assume that there are $0<\theta_1<\theta_2$ and
$\varepsilon_1>0$ such that for any $\theta\in[\theta_1,\theta_2]$ and
$\varepsilon\in[0,\varepsilon_1]$, we have
\begin{equation}
  \label{behavior}f(\theta+\varepsilon(\theta-\theta_1))
  \geq(1+\varepsilon)f(\theta),
\end{equation}
and assume $\|U_1 \|_{L^\infty(\mathbb R \times (0,
  \infty))}<\theta_2$ for any $t\in[0,\infty)$. Then
\begin{equation}
  \label{contradiction}
  \liminf_{t\rightarrow\infty}
  \inf_{U_1(x,t)>\theta_1}\frac{U_2(x,t)-\theta_1}{U_1(x,t)-\theta_1}>1,
\end{equation}
with the convention that the infimum over an empty set is $\infty$.
\end{lemma}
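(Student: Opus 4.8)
The plan is to run a rescaling argument in the spirit of Zlato\v{s} \cite{Z2006}: I will show that, after stretching $U_1$ above the level $\theta_1$ by a \emph{fixed} small factor $1+\varepsilon$, the result stays below $U_2$ for all large times, which is exactly the inequality \eqref{contradiction}.

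I begin with two preliminary observations. Since $0\le U_1(\cdot,0)\le U_2(\cdot,0)$ and $U_1\not\equiv U_2$, the comparison principle gives $U_1\le U_2$ for all $t\ge0$, and the strong maximum principle, using $U_1(x_0,0)<U_2(x_0,0)$, upgrades this to $U_1(x,t)<U_2(x,t)$ for every $x\in\mathbb R$ and every $t>0$. Moreover, since $f$ is Lipschitz with $f(0)=0$ and $U_2\ge0$, the function $U_2$ is a subsolution of the linear equation $v_t=v_{xx}+Lv$, where $L$ is a Lipschitz constant of $f$; as $U_2(\cdot,0)$ is bounded and, by compactness of every $\Omega_{0,\rho}$, tends to $0$ as $|x|\to\infty$, the heat flow preserves both properties, so $U_2(x,t)\to0$ as $|x|\to\infty$ for each fixed $t>0$. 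Consequently, for each $t>0$ the set $\{x:U_1(x,t)\ge\theta_1\}\subseteq\{x:U_2(x,t)\ge\theta_1\}$ is compact.

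Now fix any $t_1>0$ and put $K:=\{x:U_1(x,t_1)\ge\theta_1\}$, which is compact. Because $U_2-U_1$ is continuous and strictly positive at time $t_1$, we have $m:=\inf_{x\in K}\bigl(U_2(x,t_1)-U_1(x,t_1)\bigr)>0$ (interpreted as $+\infty$ if $K=\varnothing$). Choose $\varepsilon:=\min\{\varepsilon_1,\,m/(\theta_2-\theta_1)\}>0$ and consider
\begin{equation*}
  \widetilde U(x,t):=\max\Bigl\{U_1(x,t),\ \theta_1+(1+\varepsilon)\bigl(U_1(x,t)-\theta_1\bigr)\Bigr\},\qquad t\ge t_1,
\end{equation*}
so that $\widetilde U=\theta_1+(1+\varepsilon)(U_1-\theta_1)$ where $U_1\ge\theta_1$ and $\widetilde U=U_1$ where $U_1\le\theta_1$. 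I claim $\widetilde U$ is a (viscosity) subsolution of \eqref{maineq} on $\mathbb R\times(t_1,\infty)$: where $U_1<\theta_1$ it equals the solution $U_1$; where $\theta_1<U_1<\theta_2$ one computes $\widetilde U_t-\widetilde U_{xx}=(1+\varepsilon)f(U_1)$, and \eqref{behavior} with $\theta=U_1$ gives $(1+\varepsilon)f(U_1)\le f\bigl(\theta_1+(1+\varepsilon)(U_1-\theta_1)\bigr)=f(\widetilde U)$; and at a point with $U_1=\theta_1$ any $C^2$ function touching $\widetilde U$ from above also touches $U_1$ from above, so the inequality follows from $U_1$ being a solution. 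At $t=t_1$ we have $\widetilde U(\cdot,t_1)\le U_2(\cdot,t_1)$: on $K$ this is our choice of $\varepsilon$, since there $(1+\varepsilon)(U_1-\theta_1)\le(U_1-\theta_1)+\varepsilon(\theta_2-\theta_1)\le(U_1-\theta_1)+m\le U_2(\cdot,t_1)-\theta_1$, and off $K$ one has $\widetilde U=U_1\le U_2$. Since $U_1,U_2$ are bounded and $f$ is Lipschitz, the comparison principle for the Cauchy problem yields $\widetilde U\le U_2$ on $\mathbb R\times[t_1,\infty)$ (alternatively, one may run the same comparison directly on the region $\{(x,t):t>t_1,\ U_1(x,t)>\theta_1\}$, whose time slices are bounded).

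Restricting to the set where $U_1(x,t)>\theta_1$, on which $\widetilde U=\theta_1+(1+\varepsilon)(U_1-\theta_1)$, the inequality $\widetilde U\le U_2$ reads $\dfrac{U_2(x,t)-\theta_1}{U_1(x,t)-\theta_1}\ge1+\varepsilon$ for every $t>t_1$; taking the infimum over such $x$ and then the limit inferior in $t$ gives \eqref{contradiction} with value at least $1+\varepsilon>1$ (the conclusion being trivial at any time for which $\{x:U_1(x,t)>\theta_1\}$ is empty). I expect the delicate points to be the two preliminary observations rather than the barrier itself: one must ensure that the super-level sets $\{U_1(\cdot,t)\ge\theta_1\}$ stay compact for $t>0$ (this is where compactness of $\Omega_{0,\rho}$ and $f(0)=0$ enter) and that $U_2-U_1$ is bounded away from $0$ on each of them at a fixed positive time (this is where the strict inequality $U_1(x_0,0)<U_2(x_0,0)$ and the strong maximum principle enter); the subsolution computation and the comparison step are then routine.
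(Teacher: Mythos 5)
Your proof is correct: the stretched barrier $\theta_1+(1+\varepsilon)\bigl(U_1-\theta_1\bigr)$, combined with the strong maximum principle to get a positive gap $m$ on the compact super-level set and the comparison principle to propagate $\widetilde U\le U_2$ forward in time, is exactly the rescaling argument of Zlato\v{s}'s Lemma~4, which is all the paper offers as its proof (a one-line citation). The two delicate points you flag — that $U_2(\cdot,t)\to0$ as $|x|\to\infty$ for $t>0$ so that $K$ is compact, and the viscosity-subsolution property of the maximum at the interface $U_1=\theta_1$ — are both handled correctly.
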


\begin{proof}
  It is essentially Lemma 4 of \cite{Z2006}.
\end{proof}

\begin{thm}
  \label{sharpignition}
  Under the same conditions as in Theorem \ref{thmignition}, suppose
  that (P1) through (P3) hold. Then one of the following holds:
  \begin{enumerate}
  \item $\displaystyle \lim_{t\rightarrow\infty} u_\lambda(x, t) =0$
    uniformly in $x \in \mathbb{R}$ for every $\lambda>0$;
  \item There exists $\lambda^{\ast}>0$ such that
    \begin{equation*}
      \lim_{t\rightarrow\infty}u_{\lambda}(x,t)=\left\{\!\!\!
        \begin{array}{lll}
          0, & \text{uniformly in $x \in \mathbb{R}$,} & \text{for
            $0 < \lambda<\lambda^{\ast}$,}\\
          \theta_0, & \text{locally uniformly in $x \in \mathbb{R}$,} &
          \text{for $\lambda=\lambda^{\ast}$,}\\
          1, & \text{locally uniformly in $x \in \mathbb{R}$,} & \text{for
            $\lambda>\lambda^{\ast}$.}
        \end{array}\right.
    \end{equation*}
  \end{enumerate}
\end{thm}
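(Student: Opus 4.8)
The plan is to follow the architecture of the proof of Theorem~\ref{sharpbistable}, obtaining the single-point property of the threshold set from the rescaling Lemma~\ref{modify} in place of the Schr\"odinger-operator barrier (which is unavailable here because the linearization about the constant state $\theta_0$ degenerates). Define
\[
  \Sigma_0 := \{\lambda>0 : u_\lambda(\cdot,t)\to 0 \text{ uniformly as } t\to\infty\}, \qquad
  \Sigma_1 := \{\lambda>0 : u_\lambda(\cdot,t)\to 1 \text{ locally uniformly as } t\to\infty\}.
\]
First I would record two pointwise characterizations. By the ignition structure ($f\equiv 0$ on $[0,\theta_0]$) together with (SD) (the spatial maximum sits at $x=0$, and the constant $\theta_0$ is a supersolution), once $u_\lambda(0,T)<\theta_0$ the solution solves the heat equation for $t\ge T$ and therefore decays to $0$; conversely extinction forces $u_\lambda(0,t)<\theta_0$ eventually. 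Hence $\lambda\in\Sigma_0$ iff $u_\lambda(0,T)<\theta_0$ for some $T$, so $\Sigma_0$ is open; by comparison it is a left interval, and by (P3) together with the continuous dependence in $H^1$ at positive times (Proposition~\ref{p:exist}) small $\lambda$ lie in $\Sigma_0$, so $\Sigma_0=(0,\lambda_0)$ with $\lambda_0>0$ (if $\lambda_0=\infty$ this is conclusion~1). Likewise, combining Lemma~\ref{l:Enegign} with the corollary stating that propagation forces $E[u_\lambda(\cdot,t)]\to-\infty$, one has $\lambda\in\Sigma_1$ iff $E[u_\lambda(\cdot,T)]<0$ for some $T$; continuity of $E$ on $H^1$ and continuous dependence make $\Sigma_1$ open, and comparison makes it a right interval $(\lambda_1,\infty)$. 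The threshold set is then the closed interval $[\lambda_0,\lambda_1]$, on which every solution converges to $\theta_0$ locally uniformly by Theorem~\ref{thmignition}; the theorem follows once I show this interval reduces to a point.

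Assume, for contradiction, that it contains $\lambda_a<\lambda_b$. Two preliminary facts: (i) the strong maximum principle applied to $w=u_{\lambda_b}-u_{\lambda_a}\ge 0$ (it solves a linear parabolic equation and is nontrivial at $t=0$ by (P2)) gives $u_{\lambda_a}(x,t)<u_{\lambda_b}(x,t)$ for every $x$ and every $t>0$; (ii) since $\lambda_a\notin\Sigma_0$, the characterization above gives $u_{\lambda_a}(0,t)\ge\theta_0$ for all $t$, and if equality held at some $t_1>0$ then $u_{\lambda_a}(\cdot,t)\le\theta_0$ for $t\ge t_1$, whence the heat equation forces $u_{\lambda_a}(0,t)<\theta_0$ later --- a contradiction; so $u_{\lambda_a}(0,t)>\theta_0$ for all $t>0$ and the superlevel set $\{u_{\lambda_a}(\cdot,t)>\theta_0\}$ is never empty. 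Next I would prepare Lemma~\ref{modify}: take $\theta_1=\theta_0$, and note that \eqref{convex} together with $f(\theta_0)=0$ makes $u\mapsto f(u)/(u-\theta_0)$ nondecreasing on $(\theta_0,\theta_0+\delta]$; choosing $\theta_2=\theta_0+\delta'$ and $\varepsilon_1>0$ small enough that $(1+\varepsilon_1)(\theta_2-\theta_0)\le\delta$ then yields exactly \eqref{behavior}. Since $u_{\lambda_a}(0,t)\to\theta_0<\theta_2$, fix $T$ with $\sup_{t\ge T}u_{\lambda_a}(0,t)<\theta_2$, replace $f$ by a globally Lipschitz function agreeing with it on the range of the solutions, and apply the lemma with $U_1(x,s)=u_{\lambda_a}(x,s+T)$, $U_2(x,s)=u_{\lambda_b}(x,s+T)$: the remaining hypotheses hold because $u_{\lambda_i}(\cdot,T)\in C^1$ with bounded derivative (hence uniformly continuous up to $s=0$), $u_{\lambda_b}(\cdot,T)\in L^2$ is symmetric decreasing (so its superlevel sets are compact intervals), $U_1\le U_2$ with strict inequality everywhere by (i), and $\|U_1\|_{L^\infty(\mathbb R\times(0,\infty))}<\theta_2$ by the choice of $T$ and (SD). The lemma then produces $\eta>0$ and $S$ with
\[
  u_{\lambda_b}(x,t)-\theta_0\ \ge\ (1+\eta)\bigl(u_{\lambda_a}(x,t)-\theta_0\bigr)\qquad\text{for all }t\ge S\text{ and all }x\text{ with }u_{\lambda_a}(x,t)>\theta_0 .
\]

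Finally I would derive the contradiction with the locally uniform convergence $u_{\lambda_a},u_{\lambda_b}\to\theta_0$, following the scheme of Zlato\v{s} \cite{Z2006}: because the threshold set is an interval, the displayed inequality can be bootstrapped along a chain $\lambda_a=\mu_0<\mu_1<\dots<\mu_N=\lambda_b$ of threshold values --- at each step using that $u_{\mu_k}(\cdot,t)>\theta_0$ on the superlevel set of $u_{\mu_{k-1}}$, which contains that of $u_{\lambda_a}$ --- and the amplified relation \eqref{contradiction}, evaluated where $u_{\lambda_a}(\cdot,t)>\theta_0$ (e.g. at $x=0$, where the set is nonempty by fact (ii)), is incompatible with both solutions approaching the constant $\theta_0$ uniformly on compacts. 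Hence $\lambda_0=\lambda_1=:\lambda^\ast$, which gives conclusion~2.

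I expect the main obstacle to lie in two places. The first is the clean verification of the hypotheses of Lemma~\ref{modify} --- engineering the bound $\|U_1\|_{L^\infty}<\theta_2$ via the time shift, and extracting \eqref{behavior} from \eqref{convex} with the correct $\theta_1,\theta_2,\varepsilon_1$ --- which is where the convexity assumption is genuinely used. The second, and more delicate, is turning \eqref{contradiction} into a true contradiction: since the excess $u_{\lambda_a}(0,t)-\theta_0$ tends to $0$, a single multiplicative improvement is vacuous in the limit, so one must genuinely exploit the interval structure of the threshold set (equivalently, Zlato\v{s}'s iteration/rescaling mechanism) to amplify it; making that step rigorous is the heart of the argument.
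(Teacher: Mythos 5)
Your overall architecture matches the paper's: $\Sigma_0$ and $\Sigma_1$ are open intervals (via the $u_\lambda(0,T)<\theta_0$ characterization and the energy criterion with continuous dependence), the threshold set is a closed interval, and Lemma~\ref{modify} replaces the Schr\"odinger barrier to rule out two distinct threshold points, with \eqref{behavior} extracted from \eqref{convex}. However, there is a genuine gap at the final step, and it is caused by your choice $\theta_1=\theta_0$. With that choice, \eqref{contradiction} gives
\begin{equation*}
  \liminf_{t\to\infty}\frac{u_{\lambda_b}(0,t)-\theta_0}{u_{\lambda_a}(0,t)-\theta_0}>1,
\end{equation*}
which is a ratio of two quantities both tending to $0$ and is therefore perfectly compatible with both solutions converging to $\theta_0$; you correctly identify this as the sticking point, but the chain/bootstrapping you propose does not repair it. Amplifying along a finite chain $\mu_0<\dots<\mu_N$ only multiplies the lower bound by a fixed constant $(1+\eta)^N$ (with $\eta$ possibly degenerating along the chain anyway), and a fixed multiplicative gap between two quantities that both vanish is never a contradiction. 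So as written the proof does not close.

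The paper's resolution is a small but essential parameter choice: take $\theta_1$ \emph{strictly below} $\theta_0$, namely $\theta_1=\theta_0/2$ (with $\varepsilon_1=\delta/2$ and $\theta_2=(3\theta_0+\delta)/3$). Condition \eqref{behavior} still holds: for $\theta\in[\theta_1,\theta_0]$ it is trivial since $f(\theta)=0$ and $f\ge 0$ below $1$, and for $\theta=\theta_0+\alpha\in[\theta_0,\theta_2]$ one writes $\theta+\varepsilon(\theta-\theta_1)=\theta_0+(1+\varepsilon)\alpha+\varepsilon\theta_0/2$, uses the monotonicity of $f$ on $[\theta_0,\theta_0+\delta]$ (a consequence of convexity and $f(\theta_0)=0$) to discard the extra $\varepsilon\theta_0/2$, and then applies convexity exactly as you do. The payoff is that the infimum in \eqref{contradiction} is taken over the set $\{u_{\lambda_a}(\cdot,t)>\theta_0/2\}$, which contains $x=0$, and both $u_{\lambda_a}(0,t)-\theta_1$ and $u_{\lambda_b}(0,t)-\theta_1$ converge to $\theta_0-\theta_1=\theta_0/2>0$; hence the ratio tends to $1$, directly contradicting the liminf being greater than $1$. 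No iteration is needed. With this modification your argument becomes the paper's proof; without it, the contradiction is not established.
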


\begin{proof}
  Similarly to the proof of Theorem \ref{sharpbistable}, we can show
  that if $\Sigma_0\neq(0,\infty)$, then both $\Sigma_0$ and
  $\Sigma_1$ are open intervals, and hence
  $\mathbb{R}_{+}\setminus(\Sigma_0\cup\Sigma_1)$ is a closed
  interval. Then we only need to prove that
  $\mathbb{R}_{+}\setminus(\Sigma_0\cup\Sigma_1)$ contains only a
  single point.  We need to verify that if $f(u)$ satisfies
  (\ref{ignition}) and (\ref{convex}), then there exists
  $\varepsilon_1>0$, and $0<\theta_1<\theta_0<\theta_2<1$ such that
  condition (\ref{behavior}) holds. Note that convexity of $f(u)$ on
  $[\theta_0,\theta_0+\delta]$ implies that $f(u)$ is nondecreasing on
  $[\theta_0,\theta_0+\delta]$, and $\theta_0+\delta<1$. Taking
  $\varepsilon_1=\delta/2$, $\theta_1=\theta_0/2$,
  $\theta_2=(3\theta_0+\delta)/3$, we only need to prove that
  (\ref{behavior}) holds for any $\varepsilon\in[0,\varepsilon_1]$ and
  $\theta\in[\theta_0,\theta_2]$. Let $\alpha :=
  \theta-\theta_0\in[0,\delta/3]$. We have the following estimate of
  the left-hand side of (\ref{behavior}),
\begin{equation}
  f(\theta+\varepsilon(\theta-\theta_1))=
  f\left( \theta_0+(1+\varepsilon)\alpha+\frac{\varepsilon\theta_0}{2}
  \right)
  \geq{f(\theta_0+(1+\varepsilon)\alpha)},
\end{equation}
since $\theta+\varepsilon(\theta-\theta_1)<\theta_0+\delta$. By
convexity we also have
\begin{equation}
  f(\theta_0+\alpha)\leq
  \frac{f(\theta_0+(1+\varepsilon)\alpha)}{1+\varepsilon}
  +\frac{{\varepsilon}f(\theta_0)}{1+\varepsilon},
\end{equation}
which proves
\begin{equation}
  f(\theta_0+(1+\varepsilon)\alpha)\geq(1+\varepsilon)f(\theta_0+\alpha),
\end{equation}
for any $\varepsilon\in[0,\varepsilon_1]$ and
$\alpha\in[0,\theta_2-\theta_0]$. Hence (\ref{behavior}) holds for any
$\theta\in[\theta_1,\theta_2]$ and
$\varepsilon\in[0,\varepsilon_1]$.

Then we suppose that there exist two distinct values $0 < \lambda_1 <
\lambda_2$ in the threshold set
$\mathbb{R}_{+}\setminus(\Sigma_0\cup\Sigma_1)$. Denote by
$u_{\lambda_1}(x,t)$ and $u_{\lambda_2}(x,t)$ these solutions with
initial conditions $\phi_{\lambda_1}$ and $\phi_{\lambda_2}$,
respectively. Taking $\theta_1$, $\theta_2$ as above, there exists
$T>0$ such that $\|u_{\lambda_1}\|_{L^\infty(\mathbb R \times (0,
  \infty))} <\theta_2$, for any $t\geq{T}$. And for any $t\geq{T}$,
let $U_1(x,t) := {u_{\lambda_1}(x,t-T)}$ and $U_2(x,t) :=
{u_{\lambda_2}(x,t-T)}$. Obviously all the assumptions of Lemma
\ref{modify} hold. So there exists $r>1$ such that
\begin{equation}
  \label{limit}
  \liminf_{t\rightarrow\infty}
  \frac{U_1(0,t)-\theta_1}{U_2(0,t)-\theta_1}\geq{r}.
\end{equation}
But both $U_1(0,t)$ and $U_2(0,t)$ converge to $\theta_0$ as
$t\rightarrow\infty$. So that the left-hand side of (\ref{limit}) must
be $1$, which is a contradiction.
\end{proof}

\begin{rmk}
  The $C^1$ property of $f(u)$ and condition (\ref{ignition})
  imply that $f'(\theta_0)=0$. If we suppose that $f(u)\in C[0,\infty)
  \cap C^1(\theta_0,\infty)$, together with (\ref{ignition}), and
  $\displaystyle \lim_{u\rightarrow\theta_0^{+}}f'(u)>0$, then without local
  convexity condition (\ref{convex}) all the conclusions about
  convergence, equivalence, and sharp transition in this section still
  hold.
\end{rmk}

\section*{Acknowledgements}

  This work was supported by NSF via grants DMS-0718027, DMS-0908279
  and DMS-1119724. The authors would are grateful to P. Gordon,
  H. Matano, V. Moroz and M. Novaga for valuable discussions.

\bibliography{references}
\bibliographystyle{plain}
\end{document}